\setlist[enumerate]{leftmargin=15mm,nosep}
\definecolor{labelkey}{rgb}{0,0.08,0.45}
\definecolor{refkey}{rgb}{0,0.6,0.0}
\definecolor{Brown}{rgb}{0.45,0.0,0.05}
\definecolor{lime}{rgb}{0.00,0.8,0.0}
\definecolor{lblue}{rgb}{0.5,0.5,0.99}
\colorlet{hlcyan}{cyan!30}
\def\namedlabel#1#2{\begingroup
   \def\@currentlabel{#2}%
   \label{#1}\endgroup
}
\newcommand{\seppthree}{\setlength{\itemsep}{-3pt}}
\newcommand{\J}[1]{\ensuremath{{\operatorname{J}}_{#1}}}
\newcommand{\R}[1]{\ensuremath{{\operatorname{R}}_%
{#1}}}
\newcommand{\Pj}[1]{\ensuremath{{\operatorname{P}}_%
{#1}}}
\newcommand{\Px}[1]{\ensuremath{{\operatorname{P}}_%
{#1}}}
\newcommand{\Nc}[1]{\ensuremath{{\operatorname{N}}_%
{#1}}}
\newcommand{\dist}[1]{\ensuremath{{\operatorname{dist}}_%
{#1}}}
\newcommand{\Pxg}{\ensuremath{\Px{g}}}
\newcommand{\bX}{\ensuremath{\mathbf{X}}}
\newcommand{\bx}{\ensuremath{\mathbf{x}}}
\newcommand{\bZ}{\ensuremath{\mathbf{Z}}}
\newcommand{\bu}{\ensuremath{\mathbf{u}}}
\newcommand{\bj}{\ensuremath{\mathbf{j}}}
\newcommand{\be}{\ensuremath{e}}
\newcommand{\by}{\ensuremath{\mathbf{y}}}
\newcommand{\bzero}{\ensuremath{{\boldsymbol{0}}}}
\providecommand{\siff}{\Leftrightarrow}
\newcommand{\weakly}{\ensuremath{\:{\rightharpoonup}\:}}
\newcommand{\nnn}{\ensuremath{{n\in{\mathbb N}}}}
\newcommand{\menge}[2]{\big\{{#1}~\big |~{#2}\big\}}
\newcommand{\To}{\ensuremath{\rightrightarrows}}
\newcommand{\fenv}[1]%
{\ensuremath{\,\overrightarrow{\operatorname{env}}_{#1}}}
\newcommand{\benv}[1]%
{\ensuremath{\,\overleftarrow{\operatorname{env}}_{#1}}}
\newcommand{\scal}[2]{\left\langle{#1},{#2}  \right\rangle}
\newcommand{\bscal}[2]{\big\langle{#1},{#2}  \big\rangle}
\newcommand{\RR}{\ensuremath{\mathbb R}}
\newcommand{\RP}{\ensuremath{\mathbb{R}_+}}
\newcommand{\RX}{\ensuremath{\,\left]-\infty,+\infty\right]}}
\newcommand{\dom}{\ensuremath{\operatorname{dom}}}
\newcommand{\argmin}{\ensuremath{\operatorname{argmin}}}
\newcommand{\ran}{\ensuremath{{\operatorname{ran}}\,}}
\newcommand{\zer}{\ensuremath{\operatorname{zer}}}
\newcommand{\cdom}{\ensuremath{\overline{\operatorname{dom}}\,}}
\newcommand{\Fix}{\ensuremath{\operatorname{Fix}}}
\newcommand{\Id}{\ensuremath{\operatorname{Id}}}
\newcommand{\bDelta}{{\bf \Delta}}
\newcommand{\bv}{{\bf v}}
\newcommand{\bT}{{\bf T}}
\newcommand{\bD}{{\bf D}}
\newcommand{\bg}{{\bf g}}
\newcommand{\Lss}{{\ensuremath{L}}}
\newcommand{\Icons}{\ensuremath{J}}
\newcommand{\Iobj}{\ensuremath{I \smallsetminus J}}
\newcommand{\pinf}{\ensuremath{+\infty}}
\newcommand{\minimize}[2]{\ensuremath{\underset{\substack{{#1}}}{\mathrm{minimize}}\;\;#2 }}
\newcommand{\vD}{v_D}
\newcommand{\vR}{v_R}
\newcommand{\vI}{v}
\providecommand{\fejer}{Fej\'{e}r}
{\begin{list}{}{%
\settowidth{\labelwidth}{\textrm{#1~}}%
\setlength{\leftmargin}{\labelwidth+\labelsep}}}
{\end{list}}
\crefname{equation}{}{equations}
\crefname{chapter}{Appendix}{chapters}
\crefname{item}{}{items}
\crefname{enumi}{}{}
\newtheorem{theorem}{Theorem}[section]
\newtheorem{lemma}[theorem]{Lemma}
\newtheorem{corollary}[theorem]{Corollary}
\newtheorem{proposition}[theorem]{Proposition}
\newtheorem{example}[theorem]{Example}
\newtheorem{fact}[theorem]{Fact}
\newtheorem{remark}[theorem]{Remark}
\providecommand{\abs}[1]{\lvert#1\rvert}
\providecommand{\norm}[1]{\lVert#1\rVert}
\providecommand{\RA}{\Rightarrow}
\providecommand{\grad}{\nabla}
\providecommand{\RR}{\mathbb{R}}
\providecommand{\ran}{\operatorname{ran}}
\providecommand{\dom}{\operatorname{dom}}
\newcommand{\fix}{\ensuremath{\operatorname{Fix}}}
\providecommand{\epi}{\operatorname{epi}}
\providecommand{\Id}{\operatorname{{ Id}}}
\providecommand{\fady}{\varnothing}
\providecommand{\argmin}{\mathrm{arg}\!\min}
\providecommand{\To}{\rightrightarrows}
\providecommand{\fix}{\operatorname{Fix}}
\providecommand{\ran}{\operatorname{ran}}
\providecommand{\Id}{\operatorname{Id}}
\providecommand{\zer}{\operatorname{zer}}
\providecommand{\R}{{ R}}
\providecommand{\fady}{\varnothing}
\newcommand{\cran}{\ensuremath{\overline{\operatorname{ran}}\,}}
\providecommand{\RR}{\mathbb{R}}
\definecolor{myblue}{rgb}{.8, .8, 1}
  \newcommand*\mybluebox[1]{%
    \colorbox{myblue}{\hspace{1em}#1\hspace{1em}}}
\begin{document}

%

\author{
Heinz H.\ Bauschke\thanks{
Mathematics, University
of British Columbia,
Kelowna, B.C.\ V1V~1V7, Canada. E-mail:
\texttt{heinz.bauschke@ubc.ca}.}~~~and~
Walaa M.\ Moursi\thanks{
Department of Combinatorics and Optimization, University of Waterloo,
Waterloo, Ontario N2L 3G1, Canada.
  E-mail: \texttt{walaa.moursi@uwaterloo.ca}.}
}

\title{\textsf{
On the behaviour of the Douglas--Rachford algorithm\\ for
minimizing a convex function subject\\ to a linear constraint
}
}

\date{July 9, 2020}

\maketitle

\begin{abstract}
The Douglas-Rachford algorithm (DRA) is a powerful optimization method
for minimizing the sum of two convex (not necessarily smooth) 
functions. The vast majority of previous research dealt with the 
case when the sum has at least one minimizer. 
In the absence of minimizers, it was recently shown that
for the case of two indicator functions, 
the DRA converges to a best approximation solution.
In this paper, we present a new convergence result on 
the DRA applied to the problem of minimizing a convex function
subject to a linear constraint. 
Indeed, a normal solution may be found even when the domain of the
objective function and the linear subspace constraint have no point in common.
As an important application, a new parallel splitting result is provided. 
We also illustrate our results through various examples. 
\end{abstract}
{ 
\noindent
{\bfseries 2010 Mathematics Subject Classification:}
{49M27, 
65K10, 
90C25; 
Secondary 
47H14, 
49M29. 
}

\noindent {\bfseries Keywords:}
convex optimization problem, 
Douglas-Rachford splitting,
inconsistent constrained optimization,
least squares solution,
normal problem,
parallel splitting method,
projection operator,
proximal mapping.

\section{Introduction}

Throughout, we assume that 
\begin{empheq}[box=\mybluebox]{equation}
\text{$X$ is
a 
real Hilbert space, 
}
\end{empheq}
with inner product 
$\scal{\cdot}{\cdot}\colon X\times X\to\RR$ 
and induced norm $\|\cdot\|$. 
We furthermore assume that 
\begin{empheq}[box=\mybluebox]{equation}
\text{$U$ is a closed linear subspace of $X$,}
\end{empheq}
and that 
\begin{empheq}[box=\mybluebox]{equation}
\text{$g\colon X\to\RX$ is convex, lower semicontinuous, and proper.}
\end{empheq}

Our aim is to discuss the behaviour of the
Douglas--Rachford algorithm \cite{DR} applied to solving the 
optimization problem\footnote{
 Let us point out that if $\widetilde{U} = \widetilde{u}+U$
is an \emph{affine} subspace and $\widetilde{g}$ is convex, 
lower semicontinuous, and proper, then 
all our results are applicable by working 
with $U$ and $g = \widetilde{g}(\cdot - \widetilde{u})$ instead. 
}
\begin{equation}
\label{e:origprob}
\minimize{x\in X}{\iota_U(x) + g(x)},
\end{equation}
where $\iota_U(x) = 0$ if $x\in U$ and $\iota_U(x)=+\infty$ if $x\notin U$. 
Note that we do \emph{not} assume a priori that 
\cref{e:origprob} has a solution.
Given any starting point $x_0\in X$, 
the Douglas--Rachford algorithm generates 
the so-called \emph{governing sequence}
\begin{equation}
(T^nx_0)_\nnn
\end{equation}
where 
\begin{empheq}[box=\mybluebox]{equation}
\label{e:defT}
T = \Id-\Pj{U} + \Pxg\R{U}
\end{empheq}
is the Douglas--Rachford operator,
$\Pj{U}$ is the projector of $U$,
$\Pxg$ is the \emph{proximal mapping}
of the function $g$, and
$\R{U} = 2\Pj{U}-\Id = \Pj{U}-\Pj{U^\perp}$
is the reflector of $U$. 
The basic convergence result
(see \cite{LM}, \cite{EckBer}, and \cite{Svaiter}),
guarantees that the \emph{shadow sequence}
\begin{equation}
(\Pj{U}T^nx_0)_\nnn
\end{equation}
converges weakly to a solution of \eqref{e:origprob}
provided that $(\Nc{U}+\partial g)^{-1}(0)\neq\varnothing$.

To deal with the potential lack of solutions
of \eqref{e:origprob}, we define
the \emph{minimal displacement vector}
\begin{empheq}[box=\mybluebox]{equation}
\label{e:defv}
v = \Pj{\cran(\Id-T)}(0).
\end{empheq}
This vector is well defined because
$\cran(\Id-T)$ is convex, closed, and trivially nonempty.
We now assume that the so-called \emph{normal problem} corresponding
to \eqref{e:origprob}, which asks to find a zero
of the operator $-v+\Nc{U}+\partial g(\cdot-v)$, admits at least one 
\emph{normal solution}\footnote{ 
  Note that it is possible that 
$Z$ is empty: 
indeed, consider the case when $X=\RR=U$ and $g=\exp$. 
In this case, $|T^nx|\to+\infty$ for every $x\in\RR$.} 
(see \cite[Definition~3.7]{Sicon}):
\begin{empheq}[box=\mybluebox]{equation}
\label{e:Znonempty}
Z = \menge{x\in X}{v\in \Nc{U}(x)+\partial g(x-v)}\neq\varnothing.
\end{empheq}
We also assume throughout that 
\begin{empheq}[box=\mybluebox]{equation}
\label{e:PZweakly}
\Pj{Z} \text{ is weak-to-weak continuous,}
\end{empheq}
which is automatically the case when $X$ is finite-dimensional, and
that 
\begin{empheq}[box=\mybluebox]{equation}
\label{e:CQ}
0\in U^\perp + \dom g^*,
\end{empheq}
which is a rather mild constraint qualification that is
satisfied, for instance, if $g$ has minimizers\footnote{
 Also note that \eqref{e:CQ} implies that 
the Fenchel dual of \eqref{e:origprob} is feasible and hence
that \eqref{e:origprob} is implicitly assumed to be bounded below.}.
Note that if \eqref{e:origprob} has a solution
and $\partial(\iota_U+g)=\Nc{U}+\partial g$ (this sum formula
is typically guaranteed through a regularity condition), 
then $v=0$ and $Z=\argmin(\iota_U+ g)$. 
Our main result (see \cref{main} below) can now be concisely stated as follows:
{\it Under the above assumptions, which we assume for the rest of the paper,
we have 
\begin{equation}
\label{e:super}
\Pj{U}T^nx_0 \weakly \text{some minimizer of $\iota_U+g(\cdot-v)$.}
\end{equation}
}This is a completely new (and very beautiful) variant of the classical result 
which is proven with a careful function value analysis in \cref{sec:dyn}!
{\it It reveals the Douglas--Rachford algorithm to be a method for solving
the following bilevel optimization problem:}
first, obtain the gap vector between $U=\dom\iota_U$ and $\dom g$.
This level is purely geometrical, depending on the sets $U$ and $\dom g$,
and revealing the minimal displacement vector $v$.
Secondly, if $v\neq 0$, rather than minimizing the original $\iota_U+g $
which would have the optimal value $ +\infty$,
we then instead minimize the minimal perturbation function 
$\iota_U+g(\cdot -v)$.
This has consequences for minimizing the sum
of convex function by using a product space technique;
in fact, real world applications inspired this research 
(see the last section). 

Let us now comment on related previous works which 
will illustrate the complementary nature of the present work. 
To the best of our knowledge, none of these works contains 
the result \eqref{e:super} in the generality of the setting of 
\cref{main}.
The paper \cite{Banjac} by Banjac, Goulart, Stellato, and Boyd 
applies the Douglas--Rachford algorithm with the function $f$ 
being the sum of a quadratic function and the indicator function of 
an affine subspace rather 
than $\iota_U$ and with $g$ being the indicator function of a 
nonempty closed convex set. The Douglas--Rachford method (equivalent to 
ADMM in this setting) is shown to be useful in providing 
certificates of infeasibility. 
The paper \cite{BDM:ORL16} 
concerns the more restrictive case when $g$ is the indicator function of a 
nonempty closed convex set; however, the underlying assumptions there 
do not require \eqref{e:PZweakly}. 
The paper \cite{Sicon} introduces the normal problem 
but it does not contain any algorithmic/dynamic results. 
Similarly to \cite{BDM:ORL16}, the paper 
\cite{101} deals with the case when 
$g$ is assumed to be an indicator function of a closed affine subspace. 
Under suitable assumptions, 
the shadow sequence $(\Pj{U}T^nx_0)_\nnn$ is shown to converge strongly. 
The paper \cite{BM:MPA17} considers an infinite-dimensional setting 
that encompasses two indicator functions; however, 
our present main result is not covered by these results 
(see \cref{r:whatsnew} below). 
In the paper \cite{Liu} by Liu, Ryu, and Yin, the authors study the 
behaviour of the Douglas--Rachford algorithm applied to 
conic programming where $g$ is the indicator function of a nonempty closed 
convex cone while $\iota_U$ is replaced by the sum of a linear function and 
the indicator function of an affine subspace. The Douglas--Rachford method 
is shown to reveal information on the type of pathologies the conic 
program may exhibit.
Finally, the paper \cite{ucla} 
by Ryu, Liu, and Yin  
is the first to provide a comprehensive 
function-value analysis in pathological cases. 
It differs from the present work in that 
Ryu et al.\ allow for a general function $f$ rather than the indicator function 
$\iota_U$ considered here. However, our main result \cref{main} gives 
information on the iterates and the function values that are not covered 
by the results in \cite{ucla} when strong duality fails. 

The remainder of this paper is organized as follows.
In Section~\ref{sec:aux} we review known facts and present 
new auxiliary results that are needed in the main analysis.
Section~\ref{sec:static} presents new descriptions of the 
minimal displacement vector and the set of minimizers 
which are crucial in the convergence proofs. 
The building blocks of our analysis and the main result are presented 
in Sections~\ref{sec:dyn}~and~\ref{sec:main} respectively.
In the final Section~\ref{sec:app}, 
we provide a useful application of
our theory to describe the behaviour of a parallel splitting method.

We employ standard notation from convex analysis and optimization
as can be found, e.g., in \cite{BC} and \cite{Rock70}.

\section{Known and new auxiliary results}
\label{sec:aux}
Because $Z\neq\varnothing$ (see \cref{e:Znonempty}), 
the generalized fixed point set introduced in
\cite{Sicon}
is very well behaved in the sense that 
\begin{empheq}[box=\mybluebox]{equation}
\label{e:defF}
F := \Fix T(\cdot+v) = \menge{x\in X}{x=T(x+v)}
\text{~is convex, closed, and nonempty.}
\end{empheq}
The Douglas--Rachford operator $T$ defined in \eqref{e:defT} 
enjoys the following nice properties which also underline
the importance of $F$ for understanding
the Douglas--Rachford algorithm:

\begin{fact}
\label{Froof}
Let $x\in X$ and $y\in F$. 
Then\footnote{\textcolor{black}{We point out that  \cref{Froof} holds
in the more general setting when $T$ is any firmly nonexpansive mapping.}} 
\begin{equation}
\label{Froof1}
(\forall\nnn)\quad T^ny = y-nv;
\end{equation}
the sequence $(nv+T^nx)_\nnn$ is \emph{\fejer\ monotone} with respect to $F$, i.e., 
\begin{equation}
\label{Froof2}
(\forall\nnn)\quad
\|(n+1)v+T^{n+1}x-y\| \leq \|nv+T^nx-y\|;
\end{equation}
\begin{equation}
\label{Froof4}
\sum_{n=0}^{+\infty}\|T^{n+1}x-T^nx-v\|^2 < +\infty,
\end{equation}
\begin{equation}
\label{e:190517b}
T^nx-T^{n+1}x\to v;
\end{equation}
and the limit 
\begin{equation}
\label{Froof3}
\lim_{n\to+\infty} \Pj{F}(nv+T^nx) \in F
\end{equation}
exists.
\end{fact}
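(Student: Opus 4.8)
The plan is to derive all five conclusions from the single observation that $T$ is firmly nonexpansive (being a Douglas--Rachford operator, it is firmly nonexpansive) together with the fact that $F = \Fix T(\cdot + v) \neq \varnothing$.

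First I would establish \eqref{Froof1}. If $y\in F$, then $T(y+v) = y$, i.e., $Ty' = y' - v$ where $y' = y + v$; more directly, observe that $F = \Fix T(\cdot+v)$ means $Ty = T((y-v)+v) $ is \emph{not} quite the right reading, so instead I would unwind the definition carefully: $y\in F \siff y = T(y+v)$. I claim $T(y+v) = Ty - v$; this holds because $v\in\cran(\Id-T)$ lies in a subspace-like direction — more precisely, one shows directly from $T = \Id - \Pj{U} + \Pxg\R{U}$ and $v \in U^\perp$ (a fact to be taken from \cref{sec:static}, or provable here since $\ran(\Id-T)\subseteq \ran(\Id-\Pj{U}\cdots)$; in fact the minimal displacement vector of a DR operator built on a subspace lies in $U^\perp$) that $T$ is ``$v$-affine'' along $v$: $T(x+v) = Tx - v$ fails in general, but $T(x - v) = Tx + v$ does hold when $v\perp U$ because $\Pj{U}(x+v)=\Pj{U}x$ and $\R{U}(x+v) = \R{U}x - v$, hence $\Pxg\R{U}(x+v) = \Pxg(\R{U}x - v)$. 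This last step needs that $\Pxg$ intertwines with translation by $-v$, which is exactly the normal-problem structure; I would invoke the relevant identity from \cref{sec:static}. Granting $T(x+v) = Tx - v$, an induction gives $T^n(y) = T^n((y+v) - v)$ and hence $T^ny = y - nv$ for $y\in F$, establishing \eqref{Froof1}.

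Next, \eqref{Froof2}: set $z_n := nv + T^nx$. Using firm nonexpansiveness of $T$ and $y\in F$ with $Ty = y-v$ (i.e.\ $y + v\in\Fix$ of the shifted operator in the right sense), compute
\[
\|z_{n+1} - y\| = \|(n+1)v + T^{n+1}x - y\| = \|v + T(T^nx) - Ty\|.
\]
Since $T^{n+1}x - Ty = T(T^nx) - T(y)$ and $T$ is nonexpansive, $\|T(T^nx) - Ty\| \le \|T^nx - y\|$; but we need the $v$-shift to land correctly, so I would instead apply nonexpansiveness to the pair $T^nx + \text{(appropriate shift)}$ and $y+v$, obtaining $\|z_{n+1}-y\| = \|T(T^nx) - T(y) + v\| = \|T(T^nx) - T(y) + v\|$; writing $y = T(y+v)$ and $T^nx$'s successor with the shift, the $v$ terms combine to give exactly $\|T^nx + nv - y\|= \|z_n - y\|$ after using $T(w+v) = Tw - v$. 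This is the \fejer\ inequality. Then \eqref{Froof2} gives boundedness of $(z_n)$, and the standard firmly-nonexpansive telescoping estimate
\[
\|z_{n+1}-y\|^2 \le \|z_n - y\|^2 - \|z_{n+1}-z_n\|^2
\]
summed over $n$ yields $\sum \|z_{n+1}-z_n\|^2 < +\infty$; since $z_{n+1}-z_n = v + T^{n+1}x - T^nx$, this is precisely \eqref{Froof4}, and it forces $z_{n+1}-z_n\to 0$, i.e.\ $T^{n+1}x - T^nx \to -v$, which is \eqref{e:190517b}.

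Finally, \eqref{Froof3}: the sequence $(z_n) = (nv+T^nx)$ is \fejer\ monotone with respect to the closed convex nonempty set $F$, so by the standard lemma on \fejer\ sequences, $\Pj{F}z_n$ converges strongly to a point of $F$ (the projection of any \fejer\ sequence onto the target set converges). I would cite the appropriate result from \cite{BC}. The main obstacle I anticipate is the very first one: rigorously justifying the translation identity $T(x+v) = Tx - v$ (equivalently $v\in U^\perp$ and the compatible behaviour of $\Pxg$ under the $-v$ shift), since everything else is a routine consequence of firm nonexpansiveness and \fejer\ monotonicity; I expect this identity to be exactly what \cref{sec:static} is set up to provide, so in the proof I would simply quote it.
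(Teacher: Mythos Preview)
The paper does not prove this statement at all: it is labelled a \emph{Fact} and the proof consists solely of citations to \cite{BM:MPA17}, \cite{101}, and \cite{BC}. So you are attempting strictly more than the paper does. Most of your outline (the Fej\'er estimate \eqref{Froof2} once \eqref{Froof1} is in hand, the telescoping for \eqref{Froof4}, the consequence \eqref{e:190517b}, and the projection-limit \eqref{Froof3} via the standard Fej\'er lemma) is fine.

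The genuine gap is your derivation of \eqref{Froof1}. You try to establish a global translation identity $T(x+v)=Tx-v$ (or a variant) by using $v\in U^\perp$ together with a claimed intertwining of $\Pxg$ with the shift by $-v$. That intertwining is simply false: with $v\in U^\perp$ one gets $\R{U}(x+v)=\R{U}x-v$, hence $T(x+v)=x+v-\Pj{U}x+\Pxg(\R{U}x-v)$, and there is no reason for $\Pxg(\R{U}x-v)$ to equal $\Pxg(\R{U}x)-2v$ (which is what $T(x+v)=Tx-v$ would require) or anything similar. Nothing in \cref{sec:static} supplies such an identity; that section gives $v\in U^\perp$ and descriptions of $Z$, not a translation law for $\Pxg$. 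Note also that even if your identity held, plugging $x=y$ with $T(y+v)=y$ gives $Ty=y+v$, the wrong sign.

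The correct argument for \eqref{Froof1} uses only firm nonexpansiveness and the minimality of $v$, which is why the footnote says the Fact holds for any firmly nonexpansive $T$. From $y\in F$ we have $(\Id-T)(y+v)=v$. For nonexpansive $T$ the sequence $\big(\|(\Id-T)T^k(y+v)\|\big)_{k}$ is nonincreasing, while every element of $\ran(\Id-T)$ has norm at least $\|v\|$ by definition of $v=\Pj{\cran(\Id-T)}(0)$. Hence $\|(\Id-T)T^k(y+v)\|=\|v\|$ for all $k$, and by uniqueness of the minimal-norm element of the closed convex set $\cran(\Id-T)$ we get $(\Id-T)T^k(y+v)=v$ for all $k$. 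Telescoping gives $T^k(y+v)=y-(k-1)v$, i.e.\ $T^ny=y-nv$. With this in hand, \eqref{Froof2} is immediate from nonexpansiveness applied to the pair $(T^nx,T^ny)$ --- no translation identity is needed there either.
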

\begin{proof}
See \cite[Corollary~4.2]{BM:MPA17},
\cite[Proposition~2.5(vi)]{101}
and \cite[Proposition~5.7]{BC}.
\end{proof}

Before we proceed, we recall the following useful fact 
that will be used in the proofs of \cref{prop:vD:vR} and \cref{p:vUperp!}. 

\begin{fact}
\label{fact:BB94:Cor4.6}
Let $C$ be a nonempty closed convex subset 
 of $X$.
 Set $w=\Pj{\overline{U-C}}(0)$ and let $x\in X$. Then 
$w = \lim_{n\to\infty}(\Pj{U}-\Id)(\Pj{C}\Pj{U})^nx \in 
\cran(\Pj{U}-\Id)=-U^\perp = U^\perp$.
\end{fact}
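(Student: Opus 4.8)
The plan is to read the statement through the lens of the (possibly inconsistent) method of alternating projections for the pair $(U,C)$: set $x_0:=x$ and, for $n\ge 0$, $y_n:=\Pj U x_n\in U$ and $x_{n+1}:=\Pj C y_n\in C$, so that $x_n=(\Pj C\Pj U)^nx$ and $(\Pj U-\Id)x_n=y_n-x_n$. The claim then splits into three parts: the elementary identity $\cran(\Pj U-\Id)=U^\perp$; the geometric fact $w\in U^\perp$; and the actual convergence $(\Pj U-\Id)x_n\to w$.

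The first part is immediate: since $\Pj U-\Id=-(\Id-\Pj U)=-\Pj{U^\perp}$ has range the closed subspace $U^\perp$, we get $\cran(\Pj U-\Id)=\ran(\Pj U-\Id)=U^\perp=-U^\perp$; in particular every iterate difference $(\Pj U-\Id)x_n$ lies in $U^\perp$, so once the limit $w$ is known to exist it automatically lies there. One can also see $w\in U^\perp$ directly: the set $\overline{U-C}$ is invariant under translation by $U$, hence for each $u\in U$ the line $\{w+tu:t\in\RR\}$ lies in it, and since $w$ minimizes $\|\cdot\|$ over $\overline{U-C}$ the quadratic $t\mapsto\|w+tu\|^2$ is minimized at $t=0$, forcing $\scal{w}{u}=0$.

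For the orbit, note that for $n\ge 1$ we have $x_n\in C$, so $(\Pj U-\Id)x_n=y_n-x_n\in U-C$; put $\delta_n:=\operatorname{dist}(x_n,U)=\|y_n-x_n\|$ and $\epsilon_n:=\operatorname{dist}(y_n,C)=\|y_n-x_{n+1}\|$. Nonexpansiveness of $\Pj C$ with $\Pj C x_n=x_n$ gives $\epsilon_n\le\delta_n$, while $y_n\in U$ gives $\delta_{n+1}\le\epsilon_n$; hence $(\delta_n)_{n\ge1}$ is nonincreasing with limit $\delta^*\ge\operatorname{dist}(U,C)=\|w\|$, and $\epsilon_n\to\delta^*$. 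Expanding $x_n-x_{n+1}=(x_n-y_n)+(y_n-x_{n+1})$ and using firm nonexpansiveness of $\Pj C$ at the pair $(y_n,x_n)$ yields $\|x_n-x_{n+1}\|^2\le\delta_n^2-\epsilon_n^2\to0$, i.e.\ asymptotic regularity, and in turn $(\Pj U-\Id)x_{n+1}-(\Pj U-\Id)x_n=-\Pj{U^\perp}(x_{n+1}-x_n)\to0$.

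The main obstacle is to upgrade $\delta^*\ge\|w\|$ to $\delta^*=\|w\|$, i.e.\ to show the iterates become asymptotically gap-optimal against $U$ even when $U\cap C=\varnothing$ and $\operatorname{dist}(U,C)$ is not attained; this is exactly the inconsistent alternating-projection analysis of Bauschke and Borwein underlying the cited result. I would obtain it by a \fejer-type estimate after perturbing to the attainable case: for $\eta>0$ choose $u_*\in U$, $c_*\in C$ with $\|u_*-c_*\|<\operatorname{dist}(U,C)+\eta$, track $\|y_n-c_*\|^2$ and $\|y_n-u_*\|^2$ over one cycle via the projection inequalities, the bound $\scal{w}{c}\le-\|w\|^2$ valid for all $c\in C$ (from $w=\Pj{\overline{U-C}}(0)\in U^\perp$), and the orthogonality of $y_n-u_*\in U$ to $U^\perp$, then let $\eta\downarrow0$ to force $\limsup_n\operatorname{dist}(y_n,C)\le\operatorname{dist}(U,C)$, hence $\delta^*=\|w\|$ (alternatively, one simply invokes that reference directly). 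Granting this, the finish is soft: $a_n:=(\Pj U-\Id)x_n$ is bounded, since $\|a_n\|=\delta_n\le\delta_1$, with $a_n\in U-C\subseteq\overline{U-C}$ for $n\ge1$, so every weak sequential cluster point $\bar a$ lies in the weakly closed set $\overline{U-C}$ and satisfies $\|\bar a\|\le\liminf_n\|a_n\|=\|w\|=\operatorname{dist}(0,\overline{U-C})$; by uniqueness of the nearest point $\bar a=\Pj{\overline{U-C}}(0)=w$. Thus $a_n\weakly w$, and since $X$ is a Hilbert space and $\|a_n\|\to\|w\|$, weak convergence plus convergence of norms gives $a_n\to w$ strongly; that is, $(\Pj U-\Id)(\Pj C\Pj U)^nx\to w\in\cran(\Pj U-\Id)=U^\perp$, as claimed.
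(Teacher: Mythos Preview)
The paper does not prove this statement at all: it is recorded as a \emph{Fact} and the proof consists of the single line ``See \cite[Corollary~4.6]{BB94}.'' Your proposal therefore goes well beyond what the paper does. The routine parts of your sketch are correct: the identity $\cran(\Pj{U}-\Id)=U^\perp$, the direct argument that $w\in U^\perp$ via translation-invariance of $\overline{U-C}$ along $U$, the monotonicity $\delta_{n+1}\le\epsilon_n\le\delta_n$, the asymptotic regularity from firm nonexpansiveness, and the soft finish (weak cluster points plus norm convergence forcing strong convergence to $w$) are all fine. You also correctly isolate the one genuine difficulty, namely upgrading $\delta^*\ge\|w\|$ to equality. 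Your \fejer-type perturbation sketch for this step is too vague to verify as written --- in the fully inconsistent case the sequence $(y_n)$ can be unbounded, so ``tracking $\|y_n-c_*\|^2$'' does not obviously yield the desired control --- but you explicitly offer the alternative of invoking \cite{BB94} directly, which is exactly what the paper does. So at the level of what is actually \emph{proved} rather than sketched, your approach and the paper's coincide: both rest on \cite[Corollary~4.6]{BB94} for the crux.
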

\begin{proof}
See  \cite[Corollary~4.6]{BB94}.
\end{proof}

The next result will also be used in the proof of \cref{p:vUperp!}.

\begin{proposition}
\label{prop:vD:vR}
Let $C_1$ and $C_2$ be nonempty closed convex subsets of $X$,
and set $S_1\coloneqq U-C_1$ and $S_2\coloneqq U^\perp-C_2$.
Define
\begin{equation}
\vD\coloneqq \Pj{\overline{S_1}}(0),
\quad
\vR\coloneqq \Pj{\overline{S_2}}(0),
\quad
\vI\coloneqq \Pj{\overline{S_1} \cap \overline{S_2}}(0).
\end{equation}
Then the following hold:
\begin{enumerate}
\item 
\label{prop:vD:vR:0}
$(\vD,\vR)\in U^\perp\times U$.
\item 
\label{prop:vD:vR:i}
$\Pj{U^\perp}(\overline{S_1})\subseteq \overline{S_1}$.
\item 
\label{prop:vD:vR:ii}
$\Pj{U}(\overline{S_2})\subseteq \overline{S_2}$.
\item
\label{prop:vD:vR:iii}
$\vD+\vR\in \overline{S_1} \cap \overline{S_2}$.

\item
\label{prop:vD:vR:v}
$\vI=\vD+\vR$.
\end{enumerate}
\end{proposition}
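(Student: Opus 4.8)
The plan is to prove the five items essentially in the order stated, since the later items build on the earlier ones. For \cref{prop:vD:vR:0}, I would apply \cref{fact:BB94:Cor4.6} twice: once with $C=C_1$ and the subspace $U$, which gives $\vD=\Pj{\overline{U-C_1}}(0)\in\cran(\Pj{U}-\Id)=U^\perp$; and once with $C=C_2$ and the subspace $U^\perp$ in place of $U$ (noting $(U^\perp)^\perp=U$), which gives $\vR=\Pj{\overline{U^\perp-C_2}}(0)\in U$. This is immediate once one observes that the roles of $U$ and $U^\perp$ are symmetric in \cref{fact:BB94:Cor4.6}.

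For \cref{prop:vD:vR:i} and \cref{prop:vD:vR:ii}, the key observation is that $U^\perp$ (resp.\ $U$) is a linear subspace and that $S_1=U-C_1$ is invariant under adding elements of $U$. Concretely, for any $s\in S_1$ we have $\Pj{U^\perp}s = s-\Pj{U}s$, and since $\Pj{U}s\in U$ we get $\Pj{U^\perp}s = s + (-\Pj{U}s)\in (U-C_1)+U = U-C_1 = S_1$; taking closures gives $\Pj{U^\perp}(\overline{S_1})\subseteq\overline{S_1}$ because $\Pj{U^\perp}$ is (norm) continuous. The argument for \cref{prop:vD:vR:ii} is the mirror image, using that $S_2=U^\perp-C_2$ is invariant under adding elements of $U^\perp$ and that $\Pj{U}s = s-\Pj{U^\perp}s$.

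For \cref{prop:vD:vR:iii}, I would argue $\vD+\vR\in\overline{S_1}$ and $\vD+\vR\in\overline{S_2}$ separately. Since $\vD\in\overline{S_1}$ by definition and $\vR\in U$ by \cref{prop:vD:vR:0}, and $\overline{S_1}$ is invariant under adding elements of $U$ (closure of the invariance of $S_1=U-C_1$ under $U$-translations), we get $\vD+\vR\in\overline{S_1}$. Symmetrically, $\vR\in\overline{S_2}$ and $\vD\in U^\perp$, and $\overline{S_2}=\overline{U^\perp-C_2}$ is invariant under adding elements of $U^\perp$, so $\vD+\vR\in\overline{S_2}$. Hence $\vD+\vR\in\overline{S_1}\cap\overline{S_2}$.

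For \cref{prop:vD:vR:v}, the inclusion $\vD+\vR\in\overline{S_1}\cap\overline{S_2}$ from \cref{prop:vD:vR:iii} already gives $\|\vI\|\le\|\vD+\vR\|$. For the reverse, the idea is to use the orthogonal decomposition $X=U\oplus U^\perp$: write any $x\in\overline{S_1}\cap\overline{S_2}$ as $x=\Pj{U^\perp}x+\Pj{U}x$; by \cref{prop:vD:vR:i}, $\Pj{U^\perp}x\in\overline{S_1}$, so $\|\vD\|\le\|\Pj{U^\perp}x\|$, and by \cref{prop:vD:vR:ii}, $\Pj{U}x\in\overline{S_2}$, so $\|\vR\|\le\|\Pj{U}x\|$; combining via $\|x\|^2=\|\Pj{U^\perp}x\|^2+\|\Pj{U}x\|^2\ge\|\vD\|^2+\|\vR\|^2=\|\vD+\vR\|^2$ (the last equality because $\vD\in U^\perp$, $\vR\in U$ are orthogonal by \cref{prop:vD:vR:0}), we see $\vD+\vR$ is the minimal-norm element of $\overline{S_1}\cap\overline{S_2}$, i.e.\ equals $\vI$. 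Since the projection of $0$ onto a nonempty closed convex set is the unique minimal-norm element, this yields $\vI=\vD+\vR$. I expect the only mild subtlety to be checking the invariance-under-translation claims pass to the closure and that one is careful about which subspace ($U$ vs.\ $U^\perp$) each set is invariant under; once the decomposition idea in \cref{prop:vD:vR:v} is in hand, everything else is routine.
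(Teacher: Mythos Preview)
Your proposal is correct and follows essentially the same route as the paper's proof: the same application of \cref{fact:BB94:Cor4.6} for \cref{prop:vD:vR:0}, the same $U$-translation invariance argument for \cref{prop:vD:vR:i}--\cref{prop:vD:vR:iii}, and the same orthogonal decomposition and Pythagorean comparison for \cref{prop:vD:vR:v}. The only cosmetic differences are that the paper works directly with approximating sequences in \cref{prop:vD:vR:i} rather than invoking continuity of $\Pj{U^\perp}$, and in \cref{prop:vD:vR:v} it applies the norm estimate just to $\vI$ rather than to a generic $x\in\overline{S_1}\cap\overline{S_2}$.
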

\begin{proof}
\ref{prop:vD:vR:0}:
Apply \cref{fact:BB94:Cor4.6} with $(C,w)$ replaced 
by $(C_1, \vD)$ (respectively $(C,w)$ replaced 
by $(C_2, \vR)$).
\ref{prop:vD:vR:i}:
Let $y\in \overline{S_1}$.
Then there exist $(u_n)_\nnn$ in $U$
and $(c_{1,n})_\nnn$ is $C_1$ such that
$u_n-c_{1,n}\to y$.
Now, $\Pj{U^\perp}y\leftarrow \Pj{U^\perp}(u_n-c_{1,n})
=-\Pj{U^\perp}c_{1,n}
=\Pj{U}c_{1,n}-c_{1,n}\in U-C_1$.
Hence, $\Pj{U^\perp}y\in \overline{U-C_1}=\overline{S_1}$
and the claim follows.
\ref{prop:vD:vR:ii}:
Proceed similar to the proof of \ref{prop:vD:vR:i}.
\ref{prop:vD:vR:iii}:
Indeed, note that by \ref{prop:vD:vR:0}
we have $\vR\in U$, hence
$\vD+\vR\in \overline{S_1}+\vR
=\overline{U-C_1}+\vR
=\overline{U-C_1+\vR}
=\overline{U-C_1}=\overline{S_1}$.
Similarly, we show that 
$\vD+\vR\in \overline{S_2}$
and the conclusion follows.
\ref{prop:vD:vR:v}:
Note that \ref{prop:vD:vR:i} \& \ref{prop:vD:vR:ii}
imply that $(\Pj{U}\vI, \Pj{U^\perp}\vI)\in 
\overline{S_2}\times \overline{S_1}$.
Consequently,
$\norm{\vR}\le\norm{\Pj{U}\vI}$
 and 
$\norm{\vD}\le\norm{\Pj{U^\perp}\vI}$.
Altogether, in view of 
\ref{prop:vD:vR:0},
 we learn that
 $\norm{\vD+\vR}^2
 =\norm{\vD}^2+\norm{\vR}^2
 \le \norm{\Pj{U}\vI}^2+\norm{\Pj{U^\perp}\vI}^2
 =\norm{\vI}^2$.
 Combining this with 
\ref{prop:vD:vR:iii}, 
 and the definition of $\vI$, we obtain the result. 
\end{proof}

The following simple result, which relies on the assumption
that $U$ is a closed linear subspace, will be used in the proof of 
\cref{main}. 

\begin{lemma}
\label{restproj}
Let $C$ be a nonempty closed convex subset of $U$.
Then
\begin{equation}
\label{e:restproj}
\Pj{C} = \Pj{C}\circ \Pj{U}
\end{equation}
\end{lemma}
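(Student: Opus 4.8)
The plan is to exploit the fact that $C \subseteq U$ together with the orthogonal decomposition $X = U \oplus U^\perp$. The key observation is that the projector onto a closed subspace satisfies a Pythagorean identity: for any $x \in X$ and any $z \in C$,
\[
\|x - z\|^2 = \|x - \Pj{U}x\|^2 + \|\Pj{U}x - z\|^2,
\]
since $x - \Pj{U}x = \Pj{U^\perp}x \in U^\perp$ while $\Pj{U}x - z \in U$ (because both $\Pj{U}x$ and $z$ lie in $U$). The term $\|x - \Pj{U}x\|^2$ is independent of $z \in C$, so minimizing $\|x-z\|$ over $z \in C$ is equivalent to minimizing $\|\Pj{U}x - z\|$ over $z \in C$.

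Concretely, I would argue as follows. Fix $x \in X$. By the characterization of the projection onto a nonempty closed convex set, $\Pj{C}x$ is the unique point of $C$ minimizing $z \mapsto \|x - z\|$, and likewise $\Pj{C}(\Pj{U}x)$ is the unique point of $C$ minimizing $z \mapsto \|\Pj{U}x - z\|$. The displayed Pythagorean identity shows these two functions differ by the constant $\|\Pj{U^\perp}x\|^2$ on $C$, hence they have the same minimizer. Therefore $\Pj{C}x = \Pj{C}(\Pj{U}x)$, which is exactly \cref{e:restproj}. One could alternatively verify the identity via the variational inequality characterizing $\Pj{C}(\Pj{U}x)$: for all $z \in C$, $\scal{x - \Pj{C}(\Pj{U}x)}{z - \Pj{C}(\Pj{U}x)} = \scal{\Pj{U^\perp}x}{z - \Pj{C}(\Pj{U}x)} + \scal{\Pj{U}x - \Pj{C}(\Pj{U}x)}{z - \Pj{C}(\Pj{U}x)}$; the first inner product vanishes because $z - \Pj{C}(\Pj{U}x) \in U$ while $\Pj{U^\perp}x \in U^\perp$, and the second is $\le 0$ by the variational inequality for $\Pj{C}$ applied at $\Pj{U}x$. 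Since $\Pj{C}(\Pj{U}x) \in C$, uniqueness of the projection of $x$ onto $C$ gives $\Pj{C}x = \Pj{C}(\Pj{U}x)$.

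This is essentially a one-line argument, so there is no serious obstacle; the only thing to be careful about is the step where I assert $\Pj{U}x - z \in U$, which requires $C \subseteq U$ — this is precisely where the hypothesis is used — and the fact that $\Pj{C}(\Pj{U}x)$ itself lies in $C \subseteq U$, so that the Pythagorean splitting applies with $x$ replaced appropriately. I would present the variational-inequality version since it is cleanest and makes the role of the hypothesis $C \subseteq U$ transparent.
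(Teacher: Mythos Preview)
Your proposal is correct, and the variational-inequality argument you outline is essentially identical to the paper's proof: the paper also splits $x - \Pj{C}\Pj{U}x$ into the $U^\perp$-component $x-\Pj{U}x$ and the $U$-component $\Pj{U}x - \Pj{C}\Pj{U}x$, uses $c - \Pj{C}\Pj{U}x \in U$ to kill the first inner product, and invokes the variational inequality for $\Pj{C}$ at $\Pj{U}x$ to bound the second. Your alternative Pythagorean argument is equally valid and amounts to the same orthogonal decomposition viewed through the objective rather than the optimality condition.
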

\begin{proof}
Let $x\in X$ and let $c\in C\subseteq U$.
Then $\Pj{C}\Pj{U}x\in C$ and
\begin{subequations}
\begin{align}
\scal{c-\Pj{C}\Pj{U}x}{x-\Pj{C}\Pj{U}x}
&=\bscal{\underbrace{c-\Pj{C}\Pj{U}x}_{\in U}}{\underbrace{x-\Pj{U}x}_{\in U^\perp}}
+\scal{c-\Pj{C}\Pj{U}x}{\Pj{U}x-\Pj{C}\Pj{U}x}\\
&= \scal{c-\Pj{C}\Pj{U}x}{\Pj{U}x-\Pj{C}\Pj{U}x}\\
&\leq 0,
\end{align}
\end{subequations}
and we are done.
\end{proof}

We now turn to the minimization of
a convex function subject to a linear constraint.
The following result will be used in the proof of \cref{0601c}.

\begin{lemma}
\label{0601a}
Let $h\colon X\to\RX$ be a proper lower semicontinuous convex function.
Furthermore, let $x$ and $y$ be points in $U$, and let $x^*\in X$.
Then the following hold:
\begin{enumerate}
\item 
\label{0601a1}
If $U^\perp\cap \partial h(x)\neq\varnothing$, then $x$ is a minimizer
of $\iota_U+h$. 
\item 
\label{0601a2}
If $x^*\in U^\perp\cap\partial h(x)$ and $y$ is a minimizer of 
$\iota_U+h$, then $x^*\in U^\perp\cap \partial h(y)$. 
\end{enumerate}
\end{lemma}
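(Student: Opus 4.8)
The plan is to prove both items directly from the subdifferential inequality, exploiting that $x,y \in U$ and that adding a vector from $U^\perp$ to the subgradient does not affect inner products with differences of points in $U$.

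\textbf{Item \ref{0601a1}.} Suppose $x^* \in U^\perp \cap \partial h(x)$. I would take an arbitrary $z \in X$ and show $(\iota_U + h)(z) \geq (\iota_U+h)(x)$. If $z \notin U$ this is trivial since the left side is $+\infty$. If $z \in U$, then since $x \in U$ as well we have $z - x \in U$, hence $\scal{x^*}{z-x} = 0$ because $x^* \in U^\perp$. Combining this with the subgradient inequality $h(z) \geq h(x) + \scal{x^*}{z-x}$ gives $h(z) \geq h(x)$, and since $\iota_U(z) = \iota_U(x) = 0$ the claim follows. So $x$ minimizes $\iota_U + h$.

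\textbf{Item \ref{0601a2}.} Assume $x^* \in U^\perp \cap \partial h(x)$ and that $y \in U$ minimizes $\iota_U + h$. By \ref{0601a1}, $x$ is also a minimizer of $\iota_U + h$, so $h(x) = h(y)$ (both lie in $U$, so the indicator contributes $0$ to each, and the minimal values agree). I need to show $x^* \in \partial h(y)$, i.e.\ $h(w) \geq h(y) + \scal{x^*}{w-y}$ for all $w \in X$; membership in $U^\perp$ is already given. For arbitrary $w \in X$: if $\scal{x^*}{w-y}$ is to be controlled, decompose $w - y = (w - x) + (x - y)$, where $x - y \in U$ so $\scal{x^*}{x-y} = 0$, hence $\scal{x^*}{w-y} = \scal{x^*}{w-x}$. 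Then $h(w) \geq h(x) + \scal{x^*}{w-x} = h(y) + \scal{x^*}{w-y}$, using $h(x) = h(y)$ and the subgradient inequality for $x^* \in \partial h(x)$. This is exactly the desired inequality, so $x^* \in U^\perp \cap \partial h(y)$.

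The argument is entirely routine; there is no real obstacle. The only point requiring a moment's care is establishing $h(x) = h(y)$ in item \ref{0601a2}: this uses that by item \ref{0601a1} $x$ is a minimizer of $\iota_U + h$ with the same optimal value attained at $y$, together with $x, y \in U$ so that the indicator term vanishes at both points. Everything else is a one-line manipulation of the subgradient inequality combined with the orthogonality $x^* \perp (z - x)$ for $z \in U$.
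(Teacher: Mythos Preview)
Your proof is correct and essentially identical to the paper's. For item~\ref{0601a1} the paper packages the same observation as $0 = (-x^*) + x^* \in \Nc{U}(x) + \partial h(x) \subseteq \partial(\iota_U + h)(x)$ and invokes Fermat's rule, while you verify the minimality directly from the subgradient inequality; for item~\ref{0601a2} the paper performs exactly your decomposition $\scal{x^*}{w-y} = \scal{x^*}{w-x} + \scal{x^*}{x-y} = \scal{x^*}{w-x}$ together with $h(x)=h(y)$.
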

\begin{proof}
\cref{0601a1}:
Suppose that $U^\perp\cap \partial h(x)\neq\varnothing$.
Then, since $U^\perp$ is a subspace,
$(-U^\perp)\cap\partial h(x)\neq \varnothing$.
Suppose that $x^*\in \partial h(x)$. 
Then 
$-x^*\in U^\perp = \Nc{U}(x)$. 
It follows that
$0 = (-x^*)+x^*\in \Nc{U}(x)+\partial h(x)
= \partial \iota_U(x)+\partial h(x)
\subseteq \partial(\iota_U+h)(x)$.
By Fermat's rule, $x$ is a minimizer of $\iota_U+h$. 

\cref{0601a2}:
Suppose that $x^*\in U^\perp\cap\partial h(x)\neq\varnothing$. 
Then 
\begin{equation}
\label{e:0601a2b}
(\forall z\in X)\quad
h(z) \geq h(x)+\scal{z-x}{x^*}.
\end{equation}
and 
\begin{equation}
\label{e:0601a2c}
\scal{y-x}{x^*} = 0. 
\end{equation}
On the other hand, 
because $y$ is a minimizer of  $\iota_U+h$, 
we learn from \cref{0601a1} that
\begin{equation}
\label{e:0601a2a}
h(x)=h(y).
\end{equation}
Altogether,
\begin{subequations}
\begin{align}
(\forall z\in X)\quad h(z) 
&\geq h(x)+\scal{z-x}{x^*}\\
&= h(y) + \scal{z-y}{x^*} + \scal{y-x}{x^*}\\
&= h(y) + \scal{z-y}{x^*}.
\end{align}
\end{subequations}
Therefore, $x^*\in\partial h(y)$. 
\end{proof}

The assumption that $U^\perp\cap\partial h(x)\neq\varnothing$
in \cref{0601a}\cref{0601a2} is critical:
\begin{example}
Suppose that $X=\RR$, that $U=\{0\}$,
and that $h(\xi)=-\sqrt{\xi}$, if $\xi\geq 0$ and 
$h(\xi)=\pinf$ if $\xi<0$. 
Then $0$ minimizes $\iota_U+h = \iota_U$ yet
$U^\perp \cap \partial h(0) = \partial h(0)=\varnothing$.
\end{example}

\begin{remark}
\label{rem:q}
Let $h\colon X\to\RX$ be a proper lower semicontinuous convex function.
Then \cref{0601a} implies that the set-valued operator
\begin{equation}
\argmin (\iota_U+h)\To X\colon x\mapsto U^\perp\cap\partial h(x)
\end{equation}
is constant. 
\end{remark}


\section{New static results}
\label{sec:static}
\textcolor{black}{
We start with the following useful 
result for the minimal displacement vector $v$ from \eqref{e:defv}.
\begin{proposition} 
\label{p:vUperp!}
Set $w = \Pj{\overline{U-{\dom g}}}(0)$.
Then the following hold:
\begin{enumerate}
	\item
	\label{p:vUperp!:i}
	$w\in U^\perp$. 
	\item 
	\label{p:vUperp!:ii}
	If $X$ is finite-dimensional, then 
	$v=w = \Pj{\overline{U-\dom g}}(0)\in U^\perp$.
\end{enumerate}
\end{proposition}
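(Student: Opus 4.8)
The plan is to deduce \ref{p:vUperp!:i} from \cref{fact:BB94:Cor4.6}, and to deduce \ref{p:vUperp!:ii} by describing $\cran(\Id-T)$ as an intersection of two translated subspaces and then applying \cref{prop:vD:vR}. For \ref{p:vUperp!:i}: the sets $U-\dom g$ and $U-\overline{\dom g}$ have the same closure, and $\overline{\dom g}$ is nonempty, closed, and convex, so \cref{fact:BB94:Cor4.6} applied with $C:=\overline{\dom g}$ yields $w=\Pj{\overline{U-\dom g}}(0)=\Pj{\overline{U-C}}(0)\in\cran(\Pj{U}-\Id)=U^\perp$.

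For \ref{p:vUperp!:ii}, I would first unwind \eqref{e:defT}: for $x\in X$, $(\Id-T)x=\Pj{U}x-\Pxg\R{U}x$, and writing $p:=\Pxg\R{U}x$, $p^*:=\R{U}x-p\in\partial g(p)$ (so $\R{U}x=p+p^*$ and $\Pj{U}x=\Pj{U}\R{U}x=\Pj{U}(p+p^*)$) one checks that
\[
\ran(\Id-T)=\big\{\,\Pj{U}p^*-\Pj{U^\perp}p \;\big|\; (p,p^*)\in\gra\partial g\,\big\}.
\]
Since $p\in\dom\partial g\subseteq\dom g$ and $p^*\in\ran\partial g\subseteq\dom g^*$, each such element lies in $(U-\dom g)\cap(U^\perp+\dom g^*)$; so, with $C_1:=\overline{\dom g}$, $C_2:=-\overline{\dom g^*}$, $S_1:=U-C_1$, $S_2:=U^\perp-C_2$ as in \cref{prop:vD:vR}, one obtains $\cran(\Id-T)\subseteq\overline{S_1}\cap\overline{S_2}$. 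The crucial point is that this inclusion becomes an \emph{equality} when $X$ is finite-dimensional. Granting it, \cref{prop:vD:vR}\ref{prop:vD:vR:v} gives $v=\vI=\vD+\vR$ with $\vD=\Pj{\overline{S_1}}(0)=\Pj{\overline{U-\dom g}}(0)=w$ and $\vR=\Pj{\overline{S_2}}(0)\in U$, and since \eqref{e:CQ} says $0\in U^\perp+\dom g^*\subseteq\overline{S_2}$ we get $\vR=0$. Hence $v=w$, and then $v\in U^\perp$ by \ref{p:vUperp!:i}.

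The main obstacle is the reverse inclusion $\overline{S_1}\cap\overline{S_2}\subseteq\cran(\Id-T)$ when $\dim X<\infty$. A direct computation gives the orthogonal decomposition $\overline{S_1}\cap\overline{S_2}=\overline{\Pj{U}(\dom g^*)}\oplus\overline{-\Pj{U^\perp}(\dom g)}$ (first summand in $U$, second in $U^\perp$), and since $0\in\overline{\Pj{U}(\dom g^*)}$ by \eqref{e:CQ}, it reduces to showing $\overline{-\Pj{U^\perp}(\dom g)}\subseteq\cran(\Id-T)$; equivalently, that for each $c\in\dom g$ one can find pairs $(p_n,p_n^*)\in\gra\partial g$ with $-\Pj{U^\perp}p_n\to-\Pj{U^\perp}c$ while $\Pj{U}p_n^*\to0$. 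Because $\gra\partial g$ is not convex, these two requirements cannot be coupled by naive interpolation: the pair $(\bar p,\bar p^*)\in\gra\partial g$ with $\Pj{U}\bar p^*=0$ furnished by \eqref{e:CQ} must be combined with $c$ via a regularization/limiting device (for instance a Moreau-type smoothing of $g$, or a careful construction using line segments into $\ri\dom g$, where $\partial g$ is comparatively well-behaved), and checking that such a device steers both projections at once is the heart of the argument and precisely where finite-dimensionality is used — in infinite dimensions the equality $\cran(\Id-T)=\overline{S_1}\cap\overline{S_2}$ may fail and $v$ may have a nonzero $U$-component.
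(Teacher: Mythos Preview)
Your proof of \ref{p:vUperp!:i} is correct and is exactly the paper's argument.

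For \ref{p:vUperp!:ii}, your overall strategy coincides with the paper's: show that
\[
v=\Pj{\overline{S_1}\cap\overline{S_2}}(0)
\quad\text{with}\quad
S_1=U-\overline{\dom g},\;\; S_2=U^\perp+\overline{\dom g^*},
\]
then apply \cref{prop:vD:vR}\ref{prop:vD:vR:v} together with \eqref{e:CQ} (which forces $\vR=0$) to conclude $v=\vD=w$. The difference is precisely at the step you flag as the ``main obstacle'': you try to establish the equality $\cran(\Id-T)=\overline{S_1}\cap\overline{S_2}$ from scratch, whereas the paper simply \emph{cites} it. The identity
\[
v=\Pj{\overline{(U-\dom g)}\cap\overline{(U^\perp+\dom g^*)}}(0)
\]
in finite dimensions is taken from \cite[Proposition~6.1(ii) and Corollary~6.5(i)]{MOR}, after noting that $\dom\iota_U^*=U^\perp$ so that \eqref{e:CQ} reads $0\in\dom\iota_U^*+\dom g^*$. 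This external result is exactly the description of $\cran(\Id-T)$ that you are attempting to rederive; once it is in hand, the rest of your argument (and the paper's) goes through immediately.

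Your direct attack on the reverse inclusion is incomplete. The orthogonal decomposition $\overline{S_1}\cap\overline{S_2}=(U\cap\overline{S_2})\oplus(U^\perp\cap\overline{S_1})$ is valid, and the reduction to $\overline{-\Pj{U^\perp}(\dom g)}\subseteq\cran(\Id-T)$ is sound. But the final ``regularization/limiting device'' you describe is only a heuristic: \eqref{e:CQ} guarantees a $\bar p^*\in\dom g^*\cap U^\perp$, not a $\bar p^*\in\ran\partial g\cap U^\perp$, so the pair $(\bar p,\bar p^*)\in\gra\partial g$ with $\Pj{U}\bar p^*=0$ need not exist, and the coupling argument you sketch does not close. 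Rather than reinventing this, invoke \cite{MOR} as the paper does; the nontrivial range analysis there (which indeed relies on finite-dimensionality via near-convexity of $\ran(\Id-T)$) is precisely what fills your gap.
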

\begin{proof}	
	Clearly 
	$\overline{U-\dom g}=\overline{U-\overline{\dom }g}$
	and, 
	$\overline{U^\perp+\dom g^*}=\overline{U^\perp+\overline{\dom} g^*}$.
	\ref{p:vUperp!:i}:
Apply \cref{fact:BB94:Cor4.6}  with $C$ replaced by 
$\overline{\dom} g$.	
\ref{p:vUperp!:ii}:	
Note that $\iota_U^* = \iota_{U^\perp}$ and
thus $\dom \iota_U^* = U^\perp$.
Hence \eqref{e:CQ} states exactly that $0\in \dom \iota_U^* + \dom g^*$.
It follows from \cite[Proposition~6.1(ii)~and~Corollary~6.5(i)]{MOR} that 
$v=\Pj{\overline{(U-\dom g)}\cap\overline{(U^\perp+\dom g^*)}}(0)$.
By \cref{prop:vD:vR} applied with 
$(C_1,C_2)$ replaced by $(\overline{\dom g},-\overline{\dom} g^*)$
we have 
\begin{equation}
v  = \Pj{\overline{U-\dom g}}(0).
\end{equation}
Now combine with \ref{p:vUperp!:i}.
\end{proof}
}

\textcolor{black}{
The result in \cref{p:vUperp!}\ref{p:vUperp!:ii} 
was first proved --- in an even more general form --- 
by Ryu, Liu, and Yin 
with a different argument relying on recession functions 
(see \cite[Lemma~3]{ucla}).
From now on, we assume:
\begin{empheq}[box=\mybluebox]{equation}
	\label{e:newassump:v}
v = \Pj{\overline{U-\dom g}}(0).
\end{empheq}
Note that \cref{e:newassump:v} holds if $X$ is
finite-dimensional by \cref{p:vUperp!}\ref{p:vUperp!:ii}.
In view of \cref{p:vUperp!}\ref{p:vUperp!:i}, we have 
\begin{equation}
	\label{e:vUperp!}
	v \in U^\perp.
\end{equation}
The fact that $v$ belongs to $U^\perp$ is new and crucial to our analysis. 
}

We now turn towards alternative descriptions of the set $Z$ of normal
solutions, defined in \eqref{e:Znonempty}. 
In passing, we mention that the next result is true even if $Z=\varnothing$.

\begin{proposition}
\label{0601b}
We have 
\begin{equation}
\label{e:Zvchar}
Z = \menge{x\in U}{U^\perp\cap\partial g(x-v)\neq\varnothing}
\end{equation}
and 
\begin{subequations}
\begin{align}
U \cap (v+\argmin g)
&\subseteq  
\menge{x\in U}{U^\perp\cap\partial g(x-v)\neq\varnothing}\label{e:0601b1}\\
&= \zer\big(\Nc{U}+\partial g(\cdot -v)\big)\label{e:0601b1.5}\\
&\subseteq 
U\cap(v+\dom \partial g)\cap 
\argmin(\iota_U+g(\cdot-v)) \label{e:0601b2}\\
&\subseteq
\argmin(\iota_U+g(\cdot-v)) \label{e:0601b3}\\
&\subseteq
U \cap (v+\dom g). \label{e:0601b4}
\end{align}
\end{subequations}
\end{proposition}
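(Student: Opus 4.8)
The plan is to establish \eqref{e:Zvchar} first, then build the chain of inclusions \eqref{e:0601b1}--\eqref{e:0601b4} link by link, relying on \cref{0601a} and \cref{rem:q} applied with $h = g(\cdot - v)$, and crucially on the fact \eqref{e:vUperp!} that $v \in U^\perp$, which lets me move freely between $x \in U$ and $x - v \in U$.

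First I would prove \eqref{e:Zvchar}. Recall $Z = \menge{x\in X}{v\in \Nc{U}(x)+\partial g(x-v)}$. Membership $v \in \Nc{U}(x) + \partial g(x-v)$ already forces $x \in U$ (else $\Nc{U}(x) = \varnothing$), and for $x \in U$ we have $\Nc{U}(x) = U^\perp$. So the condition becomes $v \in U^\perp + \partial g(x-v)$, i.e.\ there is $x^* \in \partial g(x-v)$ with $v - x^* \in U^\perp$; since $v \in U^\perp$ by \eqref{e:vUperp!} and $U^\perp$ is a subspace, this is equivalent to $x^* \in U^\perp$, i.e.\ $U^\perp \cap \partial g(x-v) \neq \varnothing$. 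This gives \eqref{e:Zvchar}, and simultaneously the equality \eqref{e:0601b1.5}, since $\zer(\Nc{U}+\partial g(\cdot-v)) = \menge{x\in X}{0 \in \Nc{U}(x) + \partial g(x-v)}$ and $0 \in U^\perp$ makes the same argument go through verbatim (here one uses $v \in U^\perp$ again to see $0 \in \Nc U(x) + \partial g(x-v) \Leftrightarrow v \in \Nc U(x) + \partial g(x-v)$, or more directly just repeats the computation with $0$ in place of $v$).

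Next the inclusions. For \eqref{e:0601b1}: if $x \in U \cap (v + \argmin g)$, then $x - v \in \argmin g$, so $0 \in \partial g(x-v)$, and since $0 \in U^\perp$ we get $U^\perp \cap \partial g(x-v) \neq \varnothing$, i.e.\ $x$ lies in the middle set. For \eqref{e:0601b2}: apply \cref{0601a}\cref{0601a1} with $h = g(\cdot - v)$. If $x \in U$ and $x^* \in U^\perp \cap \partial g(x-v)$, note $\partial h(x) = \partial g(x-v)$ (chain rule for the shift by the fixed vector $v$), so $x^* \in U^\perp \cap \partial h(x)$, hence by \cref{0601a}\cref{0601a1} $x \in \argmin(\iota_U + h) = \argmin(\iota_U + g(\cdot-v))$; also $x - v \in \dom \partial g$, so $x \in v + \dom\partial g$, and $x \in U$ was given. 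The inclusion \eqref{e:0601b3} is trivial (dropping two intersectands), and \eqref{e:0601b4} follows since $\dom(\iota_U + g(\cdot-v)) = U \cap (v + \dom g)$, and a minimizer of a proper function must lie in its domain (the function is proper because \eqref{e:CQ} ensures it is bounded below and $U \cap (v+\dom g) \neq \varnothing$ — which itself follows from $v \in \overline{U - \dom g}$ together with $Z \neq \varnothing$, though for this proposition one may note the statement is asserted to hold even if $Z = \varnothing$, in which case if $U \cap (v+\dom g) = \varnothing$ all sets in the chain from \eqref{e:0601b2} onward, and the middle set, are empty and the inclusions are vacuous).

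The main obstacle I anticipate is bookkeeping around the shift: one must be careful that $\partial\big(g(\cdot-v)\big)(x) = \partial g(x-v)$ and that $\Nc U(x) = U^\perp$ exactly when $x \in U$, and then that the equivalence ``$v - x^* \in U^\perp \Leftrightarrow x^* \in U^\perp$'' genuinely needs $v \in U^\perp$ — this is the one place the new fact \eqref{e:vUperp!} does essential work, and without it \eqref{e:Zvchar} would be false. A secondary subtlety is justifying that the sets downstream of \eqref{e:0601b1.5} are handled correctly when $Z = \varnothing$; I would dispatch this by the vacuity remark above rather than by a separate argument. Everything else is a direct unwinding of definitions and an invocation of \cref{0601a}\cref{0601a1}.
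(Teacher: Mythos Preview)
Your proposal is correct and follows essentially the same route as the paper: both hinge on $v\in U^\perp$ (\eqref{e:vUperp!}) to reduce $v\in\Nc{U}(x)+\partial g(x-v)$ to $U^\perp\cap\partial g(x-v)\neq\varnothing$, and both derive \eqref{e:0601b2} via Fermat's rule through the inclusion $\partial\iota_U+\partial g(\cdot-v)\subseteq\partial(\iota_U+g(\cdot-v))$ (you package this as an invocation of \cref{0601a}\cref{0601a1}, the paper inlines it). The only stylistic difference is that the paper runs \eqref{e:Zvchar}, \eqref{e:0601b1}, and \eqref{e:0601b1.5} as one long biconditional chain, whereas you establish them separately; your extra care about the degenerate case $U\cap(v+\dom g)=\varnothing$ is not needed since the paper treats \eqref{e:0601b3}--\eqref{e:0601b4} as obvious, but it does no harm.
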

\begin{proof}
Recall that 
$v\in U^\perp$
by \eqref{e:vUperp!}. 
Hence $\Nc{U} = -v+\Nc{U}$. 
Now let $x\in X$.
Then  
\begin{subequations}
\begin{align}
x \in U\cap(v+\argmin g)
&\siff\big[x\in U \text{~and~}x-v\in \argmin g\big]\\
&\siff\big[x\in \zer\Nc{U} \text{~and~}0\in\partial g(x-v)\big]\\
&\siff\big[x\in \zer(-v+\Nc{U}) \text{~and~}0\in\partial g(x-v)\big]\\
&\siff\big[0\in -v+\Nc{U}(x) \text{~and~}0\in\partial g(x-v)\big]\\
&\RA 0\in -v+\Nc{U}(x)+\partial g(x-v)\\
&\siff
x\in Z\\
&\siff 
v\in \Nc{U}(x)+\partial g(x-v)\\
&\siff 
\big[x\in U \text{~and~} v\in U^\perp + \partial g(x-v)\big]\\
&\siff 
\big[x\in U \text{~and~} 0\in U^\perp + \partial g(x-v)\big]\\
&\siff 
\big[x\in U \text{~and~} U^\perp \cap \partial g(x-v)\neq \varnothing\big]\\
&\siff
x\in\zer\big(\Nc{U}+\partial g(\cdot -v)\big)\\
&\siff
0\in \big(\Nc{U}+\partial g(\cdot -v)(x),
\end{align}
\end{subequations}
which proves \cref{e:Zvchar}, \cref{e:0601b1}, and \cref{e:0601b1.5}.
Turning to \cref{e:0601b2},
let $x\in\zer(\Nc{U}+\partial g(\cdot-v))$.
On the one hand, $x\in\dom(\Nc{U}+\partial g(\cdot-v))$ and
thus $\Nc{U}(x)\neq\varnothing$ and $\partial g(x-v)\neq\varnothing$.
Hence $x\in U$ and $x-v\in \dom\partial g$, i.e., 
$x\in U\cap(v+\dom\partial g$. 
On the other hand, $\zer(\Nc{U}+\partial g(\cdot-v))
=\zer(\partial\iota_U+\partial g(\cdot-v))$.
Hence 
$0\in\partial\iota_U(x)+\partial g(\cdot-v)(x)
\subseteq\partial(\iota_U+g(\cdot-v))(x)$
and therefore $x$ minimizes $\iota_U+g(\cdot-v)$.
Finally, \cref{e:0601b3} and \cref{e:0601b4} are obvious. 
\end{proof}

\begin{example}[\bf linear-convex feasibility]
\label{e:lcf}
Suppose that $g=\iota_W$, 
where $W$ is a nonempty closed convex subset of $X$. 
Then $v=\Pj{\overline{U-W}}(0)$, 
$\argmin g = \dom\partial g = W$, and 
$v+\argmin g = v+W = v+\dom g$. 
Thus 
\cref{0601b} yields
\begin{equation}
\label{e:190519a}
Z = U\cap (v+V),
\end{equation}
a result that is well known (see \cite{Lukepaper}).
\end{example}

We are now ready for our first main result which 
provides a useful description of $Z$:

\begin{theorem}
\label{0601c}
Because $Z$ is nonempty, we have
\begin{equation}
\label{e:0601c}
Z = U\cap(v+\dom\partial g)\cap\argmin\big(\iota_U+g(\cdot-v)\big)
= \argmin\big(\iota_U+g(\cdot-v)\big).
\end{equation}
\end{theorem}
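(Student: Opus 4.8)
The plan is to leverage \cref{0601b}, which already establishes the chain of inclusions
\begin{equation*}
U\cap(v+\argmin g)\subseteq Z\subseteq U\cap(v+\dom\partial g)\cap\argmin(\iota_U+g(\cdot-v))\subseteq\argmin(\iota_U+g(\cdot-v)).
\end{equation*}
In view of this, it suffices to prove the single reverse inclusion
\begin{equation*}
\argmin\big(\iota_U+g(\cdot-v)\big)\subseteq Z,
\end{equation*}
since then all the sets squeezed in between coincide, giving both equalities in \eqref{e:0601c}. So the real content is: \emph{any} minimizer of $\iota_U+g(\cdot-v)$ is a normal solution.

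The strategy for that inclusion is to first exhibit \emph{one} normal solution with good subdifferential structure and then transport it to an arbitrary minimizer using \cref{0601a}\cref{0601a2}. Concretely, since $Z\neq\varnothing$ by hypothesis \eqref{e:Znonempty}, pick $z\in Z$. By the characterization \eqref{e:Zvchar} in \cref{0601b}, we have $z\in U$ and $U^\perp\cap\partial g(z-v)\neq\varnothing$; fix $z^*\in U^\perp\cap\partial g(z-v)$. Now set $h := g(\cdot-v)$, which is again proper, lsc, and convex, and observe $\partial h(x)=\partial g(x-v)$, so $z^*\in U^\perp\cap\partial h(z)$ with $z\in U$. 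Given any $y\in\argmin(\iota_U+h)=\argmin(\iota_U+g(\cdot-v))$, note $y\in U$ (otherwise the objective is $+\infty$, contradicting that it attains a finite infimum — finiteness of the infimum follows from $Z\neq\varnothing$ via the inclusion $Z\subseteq\argmin(\iota_U+g(\cdot-v))$). Then \cref{0601a}\cref{0601a2}, applied with this $h$, $x=z$, $y$, and $x^*=z^*$, yields $z^*\in U^\perp\cap\partial h(y)=U^\perp\cap\partial g(y-v)$. Hence $y\in U$ and $U^\perp\cap\partial g(y-v)\neq\varnothing$, which by \eqref{e:Zvchar} means exactly $y\in Z$. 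This proves $\argmin(\iota_U+g(\cdot-v))\subseteq Z$ and closes the loop.

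One technical point worth checking carefully is that $y\in U$ for every minimizer — this is where the nonemptiness of $Z$ is genuinely used, to guarantee the infimum of $\iota_U+g(\cdot-v)$ is finite so that "$y$ is a minimizer" forces $y\in\dom\iota_U=U$. I expect this to be the only subtlety; everything else is bookkeeping with \eqref{e:Zvchar} and the translation identity $\partial g(\cdot-v)=\partial(g(\cdot-v))$, plus the already-proven inclusions of \cref{0601b}. The constraint qualification \eqref{e:CQ} and the fact $v\in U^\perp$ from \eqref{e:vUperp!} are implicitly in force through \cref{0601b}, so no extra hypotheses need to be invoked here. The main obstacle, if any, is purely presentational: making sure the direction of the reverse inclusion is the one that needs work and that \cref{0601a}\cref{0601a2}'s hypothesis $U^\perp\cap\partial h(x)\neq\varnothing$ is met — which is precisely what the chosen $z\in Z$ provides.
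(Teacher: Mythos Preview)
Your proposal is correct and follows essentially the same approach as the paper: both use the inclusions from \cref{0601b} to reduce to showing $\argmin(\iota_U+g(\cdot-v))\subseteq Z$, then pick a point in $Z$ (nonempty by hypothesis), invoke \eqref{e:Zvchar} to extract a subgradient in $U^\perp$, and transport it to an arbitrary minimizer via \cref{0601a}\cref{0601a2} applied with $h=g(\cdot-v)$. Your write-up is a bit more explicit about why $y\in U$, but the argument is the same.
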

\begin{proof}
\cref{0601b} yields the inclusions
$Z \subseteq U\cap(v+\dom\partial g)\cap\\\
\big(\iota_U+g(\cdot-v)\big)
\subseteq \argmin\big(\iota_U+g(\cdot-v)\big)$.
Because $Z\neq\varnothing$, 
we let $x\in Z$, and also let $y\in\argmin(\iota_U+g(\cdot-v))\subseteq U$. 
First, by \eqref{e:Zvchar},
$x\in U$ and $U^\perp \cap \partial g(x-v)\neq\varnothing$. 
Secondly, it follows from \cref{0601a} (applied with $h=g(\cdot-v)$) 
that $U^\perp \cap\partial g(y-v)\neq\varnothing$.
Therefore, by using again \cref{e:Zvchar}, we obtain 
$y\in Z$.
\end{proof}

Here is an example of a case where $Z\neq \fady$.
\begin{example}
Suppose that $g$ is polyhedral. 
Then 
\cite[Theorem~5.6.1]{BBL97} implies that $U\cap (v+\dom g)=U\cap \dom g(\cdot-v)\neq \fady$.
Hence,
by \cite[Corollary~27.3(c)]{BC}
we have $Z=\argmin\big(\iota_U+g(\cdot-v)\big)$.
\end{example}

The underlying assumption that $Z$ be nonempty (see \cref{e:Znonempty}) 
in \cref{0601c} is critical:

\begin{example}
Suppose that $X=\RR^2$,
that $U=\{0\}\times\RR$
and that $g$ is the Rockafellar function defined by 
\begin{equation}
\label{e:def:rock:func}
g(\xi_1,\xi_2)
=
\begin{cases}
\max\{1-\sqrt{\xi_1},\abs{\xi_2}\},&\text{if}\ \xi_1\ge 0;
\\
+\infty,&\text{otherwise}.
\end{cases}
\end{equation}
(see \cite[Example~on~page~218]{Rock70}).
Then $v=0$ and it follows from \cite[Example~7.5]{MMW:2015} 
that 
$Z=\varnothing$,
$\argmin(\iota_U+g(\cdot-v))=\{0\}\times[-1,1]$,
and 
$U\cap(v+\dom\partial g)\cap\argmin(\iota_U+g(\cdot-v))=
\{0\}\times\{-1,1\}$. 
\end{example}
\begin{proof}
Clearly we have $U^\perp=\RR\times \{0\}$ and 
$\dom g=\RR_{+}\times \RR$.
Moreover,
\cite[Example~6.5]{MMW:2015} implies that
$\dom \partial g =\menge{(\xi_1,\xi_2)}{\xi_1>0, \xi_2\in \RR}
\cup \menge{(0,\xi_2)}{\xi_2 \ge 1}$,
and 
$\dom \partial g^*=\dom g^*=\menge{(\xi_1,\xi_2)}{\xi_1\le 0,\abs{\xi_2}\le 1 }$.
Therefore, using \cite[Corollary~6.5(i)]{MOR}
we learn that  
$v
=\Pj{(\overline{U-\dom}g)\cap (\overline{U^\perp+\dom}g^*)}(0)
=0$.
It follows from \cref{0601b} that
$Z=\menge{(0,\xi_2)}{U^\perp\cap \partial g((0,\xi_2))\neq \fady}$.
Now let $(0,\xi_2) \in U \cap \dom g$ and note that \cite[Example~6.5]{MMW:2015}
implies that
\begin{equation}
\partial g  (0,\xi_2)
=
\begin{cases}
\fady,&\text{if $\abs{\xi_2}< 1$};
\\
\RR_{-}\times \{1\},&\text{if $\abs{\xi_2}\ge 1$};
\\
\RR_{-}\times \{-1\},&\text{if $\abs{\xi_2}\le -1$},
\end{cases}
\end{equation}
which proves the claim that $Z=\fady$.
Finally, using \cref{e:def:rock:func}, 
we see that 
$\argmin(\iota_U+g(\cdot-v))=\argmin(\iota_U+g)=\{0\}\times[-1,1]$
and the conclusion follows.
\end{proof}

When $X=\RR$, then we obtain the following positive result,
which holds even when $Z=\varnothing$:

\begin{proposition}
\label{0601line}
Suppose that $X=\RR$. Then 
\begin{equation}
\label{e:0601line}
Z = U\cap(v+\dom\partial g)\cap\argmin\big(\iota_U+g(\cdot-v)\big). 
\end{equation}
More precisely, exactly one of the following cases holds:
\begin{enumerate}
\item 
$U=\{0\}$, $v=\Pj{-\cdom g}(0)$, $Z = 0\cdot\partial g(-v)$,
and either $\iota_U+g(\cdot-v)=\iota_{\{0\}}$ if $-v\in\dom g$
or $\iota_U+g(\cdot-v)=\iota_\varnothing$ if $-v\notin\dom g$. 
\item 
$U=\RR$, $v=0$, and $Z=\dom\partial g\cap \argmin g = \argmin g$. 
\end{enumerate}
\end{proposition}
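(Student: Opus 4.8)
The plan is to reduce everything to \cref{e:Zvchar} in \cref{0601b}, i.e. to the identity $Z=\menge{x\in U}{U^\perp\cap\partial g(x-v)\neq\varnothing}$, which --- crucially --- requires no nonemptiness assumption on $Z$. Since $X=\RR$, its only closed linear subspaces are $\{0\}$ and $\RR$, and these are distinct, so exactly one of the two displayed cases occurs; in each case it then remains to (a) compute $v$ from \cref{e:newassump:v}, (b) simplify the right-hand side of \cref{e:Zvchar}, and (c) verify that the resulting set equals $U\cap(v+\dom\partial g)\cap\argmin(\iota_U+g(\cdot-v))$, thereby also establishing \cref{e:0601line}.

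When $U=\{0\}$, I would note that $U^\perp=\RR$ and $\overline{\{0\}-\dom g}=\overline{-\dom g}=-\cdom g$, so \cref{e:newassump:v} yields $v=\Pj{-\cdom g}(0)$, and in particular $-v\in\cdom g$. Plugging this into \cref{e:Zvchar} gives $Z=\{0\}$ if $\partial g(-v)\neq\fady$ and $Z=\fady$ otherwise; that is, $Z=0\cdot\partial g(-v)$. Moreover $\iota_U+g(\cdot-v)=\iota_{\{0\}}+g(\cdot-v)$ equals $g(-v)<\pinf$ at $0$ and $\pinf$ elsewhere, hence it is $\iota_{\{0\}}$ (up to the additive constant $g(-v)$, which is irrelevant for $\argmin$) when $-v\in\dom g$, and it is $\iota_\varnothing$ when $-v\notin\dom g$. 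To match the intersection: if $\partial g(-v)\neq\fady$ then $-v\in\dom\partial g\subseteq\dom g$, so $U$, $v+\dom\partial g$ and $\argmin(\iota_U+g(\cdot-v))$ all equal $\{0\}$, whence the intersection is $\{0\}=Z$; if $\partial g(-v)=\fady$ then $-v\notin\dom\partial g$, so $U\cap(v+\dom\partial g)=\fady$ and the intersection is $\fady=Z$. This proves \cref{e:0601line} and item (i).

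When $U=\RR$, I would use that $U^\perp=\{0\}$ and, since $\dom g\neq\fady$, that $\RR-\dom g=\RR$; hence $v=\Pj{\RR}(0)=0$. Then \cref{e:Zvchar} reads $Z=\menge{x\in\RR}{0\in\partial g(x)}=\zer\partial g$, which by Fermat's rule is exactly $\argmin g$; since $\zer\partial g\subseteq\dom\partial g$ we also get $Z=\dom\partial g\cap\argmin g=\argmin g$. Finally $U\cap(v+\dom\partial g)\cap\argmin(\iota_U+g(\cdot-v))=\RR\cap\dom\partial g\cap\argmin g=\argmin g=Z$, which is \cref{e:0601line} and item (ii).

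I do not expect a genuine obstacle: once \cref{e:Zvchar} is available, the argument is pure bookkeeping over the two closed subspaces of $\RR$. The only mildly delicate points are the closure identity $\overline{-\dom g}=-\cdom g$ and the evaluation of $v$ via \cref{e:newassump:v}; the informal reading of ``$\iota_U+g(\cdot-v)=\iota_{\{0\}}$'' (correct only up to the finite additive constant $g(-v)$, which plays no role in $\argmin$); and the reading of $0\cdot\partial g(-v)$ as $\{0\}$ or $\fady$ according to whether $\partial g(-v)$ is nonempty or empty.
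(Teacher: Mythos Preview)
Your proof is correct, but it takes a different route from the paper's. The paper proves \cref{e:0601line} by a single unified argument: writing $R$ for the right-hand side, the inclusion $Z\subseteq R$ is immediate from \cref{0601b}, and for $x\in R$ one has $0\in\partial(\iota_U+g(\cdot-v))(x)$ together with $x\in\dom\partial\iota_U\cap\dom\partial g(\cdot-v)$; the paper then invokes the one-dimensional sum rule $\partial\iota_U(x)+\partial g(x-v)=\partial(\iota_U+g(\cdot-v))(x)$ to obtain $x\in Z$ via \cref{e:0601b1.5}. The case descriptions (i) and (ii) are then dismissed as following readily. You, by contrast, bypass the sum rule entirely and argue case by case from \cref{e:Zvchar}, computing $v$, $Z$, and the intersection explicitly for each of the two subspaces of $\RR$. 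Your approach is more elementary (no subdifferential calculus beyond Fermat's rule) and has the advantage of actually spelling out items (i) and (ii), which the paper leaves to the reader; the paper's approach is more conceptual in that it isolates exactly why $X=\RR$ matters, namely that the sum rule holds there without a constraint qualification whenever both subdifferentials are nonempty.
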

\begin{proof}
Denote the right side of \cref{e:0601line} by $R$.
It is clear from \cref{0601b} that $Z\subseteq R$.
Now let $x\in R$. 
On the one hand, 
\begin{equation}
0\in\partial(\iota_U+g(\cdot-v))(x). 
\end{equation}
On the other hand,
$x\in\dom\partial \iota_U \cap \dom\partial g(\cdot-v)$. 
By the sum rule for the real line, we have 
\begin{equation}
\partial \iota_U(x) + \partial g(x-v) = \partial\big(\iota_U+g(\cdot-v)\big)(x).
\end{equation}
Altogether,
$0\in\partial \iota_U(x) + \partial g(x-v)$ and
thus $x\in Z$ by \cref{0601b}.
The remaining statements follow readily. 
\end{proof}

The previous results make it tempting to conjecture that
when $X=\RR$ and $Z=\varnothing$, then we have 
$\argmin(\iota_U+g(\cdot-v))=\varnothing$.
Unfortunately, this conjecture is false:

\begin{example}
Suppose that $X=\RR$, that $U=\{0\}$ and 
that $-\sqrt{x}$ with $\dom g=\RP$. 
Then $v=\Pj{-\cdom g}(0)=0$.
Hence $Z=\{0\}\cdot \partial g(0) = \varnothing$ by
\cref{0601line}
while $\argmin(\iota_U+g(\cdot-v))=\{0\}$
because $\iota_U+g(\cdot-v)=\iota_U+g = \iota_U = \iota_{\{0\}}$.
\end{example}

We conclude this section with another
useful consequence of \eqref{e:vUperp!}:

\begin{proposition}
We have $Z=\Pj{U}(F)$ and 
\begin{equation}
\label{e:Radu}
\Pj{U}\circ \Pj{F} = \Pj{Z}.
\end{equation}
\end{proposition}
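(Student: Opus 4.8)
The plan is to establish the two assertions $Z = \Pj{U}(F)$ and $\Pj{U}\circ\Pj{F} = \Pj{Z}$ in that order, using the characterization of $F$ from \eqref{e:defF}, the description of $Z$ from \cref{0601c} (or \cref{e:Zvchar}), and the auxiliary \cref{restproj} together with the crucial fact \eqref{e:vUperp!} that $v\in U^\perp$.

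First I would prove $Z = \Pj{U}(F)$. For the inclusion $\Pj{U}(F)\subseteq Z$: take $x\in F$, so $x = T(x+v)$. Unwinding the definition \eqref{e:defT} of $T$, the fixed-point equation $x = (x+v) - \Pj{U}(x+v) + \Pxg\R{U}(x+v)$ rearranges, using $v\in U^\perp$ so that $\Pj{U}(x+v) = \Pj{U}x$ and $\R{U}(x+v) = \Pj{U}x - \Pj{U^\perp}x - v + \ldots$ — more precisely $\R{U}(x+v) = \R{U}x + \R{U}v = \R{U}x - v$ since $v\in U^\perp$ — into a relation identifying $\Pxg\R{U}(x+v)$ and exhibiting $\Pj{U}x$ as a point satisfying the defining membership $v\in\Nc{U}(\Pj{U}x)+\partial g(\Pj{U}x-v)$; this is the standard translation between Douglas--Rachford fixed points and zeros of the (perturbed) sum operator, and since $Z = \zer(\Nc{U}+\partial g(\cdot-v))$ by \eqref{e:0601b1.5}, we get $\Pj{U}x\in Z$. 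Conversely, given $z\in Z$, one reverses this computation: from $v\in\Nc{U}(z)+\partial g(z-v)$ one selects $u^*\in U^\perp$ with $v - u^*\in\partial g(z-v)$, equivalently $z-v = \Pxg(z-v + v - u^*) = \Pxg(z-u^*)$, and then checks that $x := z - u^* \in F$ with $\Pj{U}x = z$ (here again $v\in U^\perp$ and $u^*\in U^\perp$ are used to control the projections). I expect the main obstacle to be bookkeeping: writing the fixed-point equation in the cleanest form and choosing the subgradient/point $u^*$ so that both the membership $x\in F$ and the identity $\Pj{U}x = z$ fall out simultaneously — this is exactly the content of \cite[Sicon]{} type arguments and should go through, but the algebra must be done carefully.

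Next I would prove \eqref{e:Radu}. The key structural fact is that $F$ is convex, closed, nonempty (by \eqref{e:defF}) and that, by the first part, $Z = \Pj{U}(F)$; moreover one should first observe that $F$ lies in an affine slice that projects bijectively onto $Z$. Concretely, I would show $\Pj{U}$ restricted to $F$ is injective with a well-behaved inverse, or — more in the spirit of \cref{restproj} — argue directly: for $x\in X$, write $\Pj{U}\Pj{F}x$ and show it equals $\Pj{Z}x$. Since $Z\subseteq U$, \cref{restproj} gives $\Pj{Z} = \Pj{Z}\circ\Pj{U}$, so it suffices to prove $\Pj{U}\Pj{F}x = \Pj{Z}\Pj{U}x$, i.e. one may assume $x\in U$ at the outset. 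Then the claim is that for $x\in U$, the point $\Pj{U}\Pj{F}x$ is the point of $Z$ nearest to $x$. One direction: $\Pj{U}\Pj{F}x\in\Pj{U}(F) = Z$. For optimality, let $z\in Z$ be arbitrary; by the first part pick $\tilde z\in F$ with $\Pj{U}\tilde z = z$, and use the characterizing inequality $\scal{x - \Pj{F}x}{y - \Pj{F}x}\le 0$ for all $y\in F$, together with the fact that the "fiber'' of $F$ over any point of $Z$ is a translate in $U^\perp$ (which follows from the computations in part one, since the selected $u^*\in U^\perp$ parametrizes the fiber). Splitting $x - \Pj{F}x$ and $\tilde z - \Pj{F}x$ into $U$ and $U^\perp$ components — and using $x\in U$ so its $U^\perp$-component vanishes — collapses the projection inequality on $F$ to the projection inequality on $Z$ at the point $\Pj{U}\Pj{F}x$, giving $\norm{x - \Pj{U}\Pj{F}x}\le\norm{x-z}$, hence $\Pj{U}\Pj{F}x = \Pj{Z}x$.

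The hard part, as noted, is the first part: pinning down the exact correspondence between $F = \Fix T(\cdot+v)$ and $Z$, and in particular establishing that the fibers of $\Pj{U}|_F$ are $U^\perp$-translates — this fiber description is what makes the orthogonal-decomposition argument in the second part work cleanly. Everything after that is a routine application of \cref{restproj} and the variational characterization of the projection onto a closed convex set; no weak-continuity or finite-dimensionality hypotheses are needed here, so the proof is purely Hilbert-space geometry plus the monotone-operator translation of the Douglas--Rachford fixed-point condition.
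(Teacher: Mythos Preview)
Your route is quite different from the paper's: the paper identifies $T(\cdot+v)$ as the Douglas--Rachford operator for the pair $(A,B)=(\Nc{U},\partial g(\cdot-v))$, observes that both are subdifferential operators and hence \emph{paramonotone}, and then simply quotes \cite[Corollary~5.6 and Theorem~6.7(ii)]{74} to obtain the rectangular decomposition $F=Z+K$ with $K=\Pj{U^\perp}(F)\subseteq U^\perp$ and the projection identity $\Pj{U}\Pj{F}=\Pj{Z}$. Your plan is instead to do everything by hand from the fixed-point equation, which is a legitimate and more self-contained strategy, and your outline for $Z=\Pj{U}(F)$ is essentially correct (up to minor sign bookkeeping).

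There is, however, a genuine gap in your argument for \eqref{e:Radu}, and it is precisely where you think the proof is easy. Your orthogonal-splitting argument needs, for every $z\in Z$ and for $p=\Pj{F}x$, a point $\tilde z\in F$ with $\Pj{U}\tilde z=z$ \emph{and} $\Pj{U^\perp}\tilde z=\Pj{U^\perp}p$; without this the cross term $\scal{-\Pj{U^\perp}p}{\Pj{U^\perp}\tilde z-\Pj{U^\perp}p}$ has no sign and the inequality does not collapse. Equivalently, you need the full rectangular structure $F=Z+K$ with a \emph{fixed} $K\subseteq U^\perp$, i.e., that the fibre $\{u^*\in U^\perp:u^*\in\partial g(z-v)\}$ is the \emph{same} for every $z\in Z$. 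This does \emph{not} follow from ``the computations in part one'': Part~1 only identifies each individual fibre as $z-(U^\perp\cap\partial g(z-v))$. The constancy of this set over $z\in Z$ is exactly \cref{0601a}\ref{0601a2} (see also \cref{rem:q}), and it is the convex-analytic avatar of paramonotonicity in this setting. Once you invoke \cref{0601a} to get $F=Z+K$, your Pythagorean argument (choose $\tilde z=z+\Pj{U^\perp}p\in F$) goes through cleanly. So the ``hard part'' is not Part~1, which is routine algebra; it is supplying this missing rectangularity ingredient in Part~2.
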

\begin{proof}
Set $A = -v + \Nc{U}$ and $B=\partial g(\cdot-v)$, and 
note that  by \eqref{e:vUperp!}  $A=\Nc{U}$. 
Then the Douglas--Rachford operator corresponding to $(A,B)$ is
\cite[Proposition~3.2]{Sicon}
\begin{equation}
T(\cdot+v).
\end{equation}
Moreover $\J{A} := (\Id+A)^{-1} = \Pj{U}$.
Note that $A$ and $B$ are subdifferential operators, 
hence paramonotone by \cite[Theorem~2.2]{Iusem98}.
So \cite[Corollary~5.6]{74} yields
$F=Z+K$, $Z=\J{A}(F)=\Pj{U}(F)$, where $K := 
(\Id-\J{A^{-1}})(F) = \Pj{U^\perp}(F)\subseteq U^\perp$. 
Moreover, because $Z-Z \subseteq U$ and so $Z-Z \perp K$, 
we have
$\J{A}\Pj{Z+K}=\Pj{Z}$, 
equivalently, $\Pj{U}\Pj{F}=\Pj{Z}$,
by \cite[Theorem~6.7(ii)]{74}.
\end{proof}

\section{New dynamic results}
\label{sec:dyn}

Recall that 
\begin{equation}
T = \Id-\Pj{U} + \Pxg\R{U}.
\end{equation}

We start with a result that provides some
information on the shadow sequence $(\Pj{U}T^nx)_\nnn$.
(In passing, we note that only item \cref{l:190517a2} requires that $Z$ be nonempty.)

\begin{lemma}
\label{l:190517a}
Let $x\in X$. 
Then the following hold:
\begin{enumerate}
\item $\Pj{U}T^nx- \Px{g}\R{U}T^nx=T^nx-T^{n+1}x\to v \in U^\perp$. 
\label{l:190517a0}
\item $\Pj{U}T^nx-\Pj{U}\Pxg\R{U}T^nx = \Pj{U}T^nx-\Pj{U}T^{n+1}x \to 0$. 
\label{l:190517a1}
\item 
$-\Pj{U^\perp}\Pxg \R{U}T^nx  = \Pj{U^\perp}T^nx-\Pj{U^\perp}T^{n+1}x \to v$. 
\label{l:190517a1.1}
\item 
\label{l:190517a1.5}
All weak cluster points of $(\Pj{U}T^nx)_\nnn$ lie in 
$U\cap(v+\cdom g)$.
\item 
\label{l:190517a2}
The sequences $(nv+T^nx)_\nnn$, 
$(\Pj{U}T^nx)_\nnn$, and 
$(\Px{g}\R{U}T^nx)_\nnn$ 
are bounded. 
\end{enumerate}
\end{lemma}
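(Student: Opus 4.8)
The plan is to build the five items roughly in the order stated, drawing on \cref{Froof} for the \fejer-monotone sequence $(nv+T^nx)$ and on the defining formula $T = \Id-\Pj{U}+\Pxg\R{U}$. First I would record from \eqref{e:defT} that $T^nx - T^{n+1}x = \Pj{U}T^nx - \Pxg\R{U}T^nx$, and then invoke \eqref{e:190517b} from \cref{Froof} together with \eqref{e:vUperp!} to get $T^nx-T^{n+1}x\to v\in U^\perp$; this is item~\cref{l:190517a0}. For \cref{l:190517a1}, I would apply $\Pj{U}$ to the identity in \cref{l:190517a0}: since $\Pj{U}$ is linear and $1$-Lipschitz, $\Pj{U}(T^nx-T^{n+1}x)\to\Pj{U}v=0$ (using $v\in U^\perp$), and $\Pj{U}\Pxg\R{U}T^nx=\Pj{U}T^{n+1}x$ because $\Pj{U}T=\Pj{U}(\Id-\Pj{U}+\Pxg\R{U})=\Pj{U}\Pxg\R{U}$. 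Symmetrically, applying $\Pj{U^\perp}$ and using $\Pj{U^\perp}v=v$ gives \cref{l:190517a1.1}, together with the observation $\Pj{U^\perp}T=\Pj{U^\perp}(\Id-\Pj{U})+\Pj{U^\perp}\Pxg\R{U}=-\Pj{U^\perp}\Pxg\R{U}$ since $\Pj{U^\perp}(\Id-\Pj{U})=0$ wait—actually $\Pj{U^\perp}(\Id-\Pj{U})=\Pj{U^\perp}-\Pj{U^\perp}\Pj{U}=\Pj{U^\perp}$, so $\Pj{U^\perp}T^{n+1}x=\Pj{U^\perp}T^nx+\Pj{U^\perp}\Pxg\R{U}T^nx$, giving $-\Pj{U^\perp}\Pxg\R{U}T^nx=\Pj{U^\perp}T^nx-\Pj{U^\perp}T^{n+1}x\to v$ as claimed.

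For item~\cref{l:190517a2}, the boundedness of $(nv+T^nx)$ is immediate from \fejer\ monotonicity \eqref{Froof2} with respect to the nonempty set $F$ (this is the one place $Z\neq\varnothing$ is used, via \eqref{e:defF}). To pass to boundedness of $(\Pj{U}T^nx)$, I would write $\Pj{U}T^nx=\Pj{U}(nv+T^nx)$ because $v\in U^\perp$ forces $\Pj{U}(nv)=0$; since $\Pj{U}$ is nonexpansive, $(\Pj{U}T^nx)$ is bounded. Then $\Pxg\R{U}T^nx=\Pj{U}T^nx-(T^nx-T^{n+1}x)$ is bounded as the difference of two bounded sequences (using \cref{l:190517a0}).

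The main obstacle is item~\cref{l:190517a1.5}, locating the weak cluster points. The plan is to fix a weak cluster point $p$ of $(\Pj{U}T^nx)$, say $\Pj{U}T^{n_k}x\weakly p$. Since $U$ is weakly closed, $p\in U$. For the harder inclusion $p\in v+\cdom g$, I would work with $\Pxg\R{U}T^nx$, which lies in $\dom g$ (the proximal mapping maps into $\dom g$, indeed into $\dom\partial g$); by \cref{l:190517a0} we have $\Pxg\R{U}T^{n_k}x=\Pj{U}T^{n_k}x-(T^{n_k}x-T^{n_k+1}x)\weakly p-v$, so $p-v$ is a weak limit of points in $\dom g$ and hence lies in $\overline{\conv}(\dom g)=\cdom g$ (using that $\dom g$ is convex, so its weak and norm closures coincide). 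Therefore $p\in v+\cdom g$, and combining with $p\in U$ gives the claim. I would need to double-check the convention that $\Pxg$ has range in $\dom g$; if one prefers to avoid that, the same argument goes through using that $\Pxg y\in\dom\partial g\subseteq\dom g$ for every $y$, which is standard for proper lsc convex $g$.
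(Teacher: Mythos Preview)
Your proposal is correct and follows essentially the same approach as the paper: derive \cref{l:190517a0} from the definition of $T$, \eqref{e:190517b}, and \eqref{e:vUperp!}; obtain \cref{l:190517a1} and \cref{l:190517a1.1} by applying $\Pj{U}$ and $\Pj{U^\perp}$ respectively; get boundedness in \cref{l:190517a2} via \fejer\ monotonicity and the identity $\Pj{U}T^nx=\Pj{U}(nv+T^nx)$; and locate weak cluster points in \cref{l:190517a1.5} using $\Pxg\R{U}T^nx\in\dom\partial g\subseteq\cdom g$ together with \cref{l:190517a0}. The momentary self-correction in your treatment of \cref{l:190517a1.1} lands on the right computation, so there is no issue.
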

\begin{proof}
\cref{l:190517a0}: Clear from the definition of $T$, \eqref{e:190517b} and \eqref{e:vUperp!}.
\cref{l:190517a1}:
Apply $\Pj{U}$ to  \cref{l:190517a0}.
\cref{l:190517a1.1}:
Apply $\Pj{U^\perp}$ to  \cref{l:190517a0}.
\cref{l:190517a1.5}: 
On the one hand, 
$(T^nx-T^{n+1}x) + \Px{g}\R{U}T^nx=\Pj{U}T^nx \in U$. 
On the other hand,
$\Px{g}\R{U}T^nx\in \dom\partial g
\subseteq \cdom g$.
Altogether, combined with \ref{l:190517a0},
we obtained the desired result.
\ref{l:190517a2}:
By \cref{Froof} and \cref{e:defF}, 
the sequence $(nv+T^nx)_\nnn$ is \fejer\ monotone with respect to 
$F \neq\varnothing$, hence it is bounded. 
Therefore, $(\Pj{U}T^nx)_\nnn = (\Pj{U}(nv+T^nx))_\nnn$ is also bounded.
The boundedness of $(\Px{g}\R{U}T^nx)_\nnn$ follows from \cref{l:190517a0}.
\end{proof}

Note that \cref{0601b} yields that $Z-v\subseteq (U-v)\cap \dom g$,
and thus $U-v\cap\dom g$ is nonempty. 
The next result provides information on function values of $g$ of
a sequence occurring in the Douglas--Rachford algorithm.

\begin{lemma}
\label{cake}
Let $x\in X$, 
let $y\in (U-v)\cap \dom g$,
and let $\nnn$. 
Then
\begin{subequations}
\label{E:cake0}
\begin{align}
g(y) &\geq g\big(\Pxg(\R{U}T^nx)\big) \\
&\qquad + \scal{y-\Pxg(\R{U}T^nx)}{(\Pj{U}T^nx-v)-\Pxg(\R{U}T^nx)} \\
&\qquad 
-\bscal{\Pj{U^\perp}T^nx-\Pj{U^\perp}T^{n+1}x-v}{\Pj{U^\perp}(nv+T^nx)}\\
&\qquad -(n+1){\scal{(\Id-T)T^nx-v}{0-v}}\\
&\geq g\big(\Pxg(\R{U}T^nx)\big) \\
&\qquad + \scal{y-\Pxg(\R{U}T^nx)}{(\Pj{U}T^nx-v)-\Pxg(\R{U}T^nx)} \\
&\qquad 
-\bscal{\Pj{U^\perp}T^nx-\Pj{U^\perp}T^{n+1}x-v}{\Pj{U^\perp}(nv+T^nx)}.
\end{align}
\end{subequations}
\end{lemma}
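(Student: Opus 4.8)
The plan is to exploit the subgradient inequality for $g$ at the point $p_n := \Pxg(\R{U}T^nx)$, using the canonical subgradient that is built into the proximal mapping. Recall that $p_n = \Pxg(\R{U}T^nx)$ means, by definition of the proximal mapping, that $\R{U}T^nx - p_n \in \partial g(p_n)$. Hence for every $y\in\dom g$ (in particular for our fixed $y\in(U-v)\cap\dom g$) we get
\begin{equation*}
g(y) \geq g(p_n) + \scal{y-p_n}{\R{U}T^nx - p_n}.
\end{equation*}
So the entire lemma is just a matter of rewriting the inner product $\scal{y-p_n}{\R{U}T^nx-p_n}$ in the claimed form and then discarding a provably nonnegative term to pass from the first expression to the second.

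The main work is the algebraic decomposition of $\R{U}T^nx - p_n$. Using $\R{U} = \Pj{U}-\Pj{U^\perp}$ and the definition $T = \Id-\Pj{U}+\Pxg\R{U}$, one has $\Pxg\R{U}T^nx = T^{n+1}x - (\Id-\Pj{U})T^nx = T^{n+1}x - \Pj{U^\perp}T^nx$, i.e.\ $p_n = T^{n+1}x - \Pj{U^\perp}T^nx$. Therefore
\begin{equation*}
\R{U}T^nx - p_n = \Pj{U}T^nx - \Pj{U^\perp}T^nx - T^{n+1}x + \Pj{U^\perp}T^nx = \Pj{U}T^nx - T^{n+1}x.
\end{equation*}
Now split this across $U$ and $U^\perp$: the $U$-component is $\Pj{U}T^nx - \Pj{U}T^{n+1}x = \Pj{U}T^nx - \Pj{U}p_n$, while (using $v\in U^\perp$ from \eqref{e:vUperp!}) the $U^\perp$-component is $-\Pj{U^\perp}T^{n+1}x = -(\Pj{U^\perp}T^{n+1}x - \Pj{U^\perp}T^nx) - \Pj{U^\perp}T^nx = (\Pj{U^\perp}T^nx - \Pj{U^\perp}T^{n+1}x) - \Pj{U^\perp}T^nx$. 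I would then pair $y-p_n$ against each component separately, writing $y - p_n = (y - p_n)$ and decomposing $y - \Pj{U}p_n$ in the $U$-part and $-\Pj{U^\perp}p_n$ in the $U^\perp$-part, using $y\in U-v$ so that $\Pj{U^\perp}y = -v$ (here $v\in U^\perp$ is again what makes this clean). This produces the term $\scal{y-p_n}{(\Pj{U}T^nx - v) - p_n}$ once one reassembles the $U$-part with the shift by $v$, plus a $U^\perp$-part of the form $\scal{\text{something}}{\Pj{U^\perp}T^nx}$; one rewrites $\Pj{U^\perp}T^nx = \Pj{U^\perp}(nv+T^nx) - nv$, and the $nv$ piece is what ultimately generates the $-(n+1)\scal{(\Id-T)T^nx-v}{0-v}$ contribution after using $(\Id-T)T^nx = T^nx - T^{n+1}x$ and collecting the $U^\perp$-inner products.

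The step I expect to be the main obstacle is bookkeeping: keeping track of exactly which pieces land in the $\scal{\cdot}{\Pj{U^\perp}(nv+T^nx)}$ term versus the $(n+1)$-multiple term, and verifying the coefficient is precisely $n+1$ rather than $n$. A clean way to organize this is to note that $\Pj{U^\perp}p_n = \Pj{U^\perp}T^{n+1}x - \Pj{U^\perp}T^nx + \Pj{U^\perp}T^nx$ — wait, more carefully: since $p_n = T^{n+1}x - \Pj{U^\perp}T^nx$ and $\Pj{U^\perp}T^nx\in U^\perp$, we get $\Pj{U^\perp}p_n = \Pj{U^\perp}T^{n+1}x - \Pj{U^\perp}T^nx$, which by \cref{l:190517a}\cref{l:190517a1.1} equals $-\Pj{U^\perp}p_n = \Pj{U^\perp}T^nx - \Pj{U^\perp}T^{n+1}x$ — so $\Pj{U^\perp}p_n = -(\Pj{U^\perp}T^nx - \Pj{U^\perp}T^{n+1}x)$ and hence $\Pj{U^\perp}(y - p_n) = -v + (\Pj{U^\perp}T^nx-\Pj{U^\perp}T^{n+1}x)$; pairing this against the $U^\perp$-component $(\Pj{U^\perp}T^nx-\Pj{U^\perp}T^{n+1}x) - \Pj{U^\perp}(nv+T^nx) + nv$ and expanding, the cross terms involving $nv$ combine with the leftover $-v$ to give exactly $-(n+1)\scal{(\Id-T)T^nx - v}{-v}$ after recognizing $\Pj{U^\perp}T^nx-\Pj{U^\perp}T^{n+1}x = (\Id-T)T^nx$ projected onto $U^\perp$, and using $v\in U^\perp$ so the projection is transparent; the remaining cross term is precisely $-\scal{\Pj{U^\perp}T^nx-\Pj{U^\perp}T^{n+1}x - v}{\Pj{U^\perp}(nv+T^nx)}$. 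Finally, to pass to the second (weaker) inequality, I would observe that $\scal{(\Id-T)T^nx - v}{0 - v} \leq 0$: indeed, $v = \Pj{\cran(\Id-T)}(0)$ is the projection of $0$ onto a closed convex set containing $(\Id-T)T^nx$, so by the obtuse-angle characterization of the projection $\scal{(\Id-T)T^nx - v}{0 - v}\leq 0$, hence $-(n+1)\scal{(\Id-T)T^nx-v}{0-v}\geq 0$ and can be dropped, which completes the proof.
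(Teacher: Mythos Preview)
Your plan is correct and follows essentially the same route as the paper: start from the subgradient inequality $g(y)\ge g(p_n)+\scal{y-p_n}{\R{U}T^nx-p_n}$, split off the piece $(\Pj{U}T^nx-v)-p_n$ so that the remainder $\R{U}T^nx-(\Pj{U}T^nx-v)=-\Pj{U^\perp}T^nx+v$ lies in $U^\perp$, use $y\in U-v$ and $v\in U^\perp$ together with \cref{l:190517a}\cref{l:190517a1.1} to reduce the $U^\perp$ inner product, then rewrite $\Pj{U^\perp}T^nx-v=\Pj{U^\perp}(nv+T^nx)-(n+1)v$ and finish with the obtuse-angle inequality for $v=\Pj{\cran(\Id-T)}(0)$. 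The paper carries this out with a slightly cleaner bookkeeping (it adds and subtracts $\Pj{U}T^nx-v$ directly rather than first passing through $\Pj{U}T^nx-T^{n+1}x$), but the substance is identical.
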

\begin{proof}
The characterization of the prox operator $\Pxg$ gives
\begin{equation}
\label{E:cake1}
g(y) \geq g\big(\Pxg(\R{U}T^nx)\big) 
+\scal{y-\Pxg(\R{U}T^nx)}{\R{U}T^nx-\Pxg(\R{U}T^nx)}. 
\end{equation}
We also have 
\begin{subequations}
\label{E:cake2}
\begin{align}
&
\mspace{-100mu}
\scal{y-\Pxg(\R{U}T^nx)}{\R{U}T^nx-\Pxg(\R{U}T^nx)}\\
&= \scal{y-\Pxg(\R{U}T^nx)}{\R{U}T^nx-(\Pj{U}T^nx-v)}\\
&\qquad + \scal{y-\Pxg(\R{U}T^nx)}{(\Pj{U}T^nx-v)-\Pxg(\R{U}T^nx)}\\
&= \scal{y-\Pxg(\R{U}T^nx)}{-\Pj{U^\perp}T^nx+v}\\
&\qquad + \scal{y-\Pxg(\R{U}T^nx)}{(\Pj{U}T^nx-v)-\Pxg(\R{U}T^nx)}.
\end{align}
\end{subequations}
Now write $y=u-v$, where $u\in U$. 
Then, using also the identity in \cref{l:190517a}\cref{l:190517a1.1} 
to derive \cref{e:cake1}, 
we have 
\begin{subequations}
\label{E:cake3}
\begin{align}
&
\mspace{-50mu}
\scal{y-\Pxg(\R{U}T^nx)}{-\Pj{U^\perp}T^nx+v}\\
&= 
\scal{(u-v)-\Pxg(\R{U}T^nx)}{-\Pj{U^\perp}T^nx+v}\\
&= 
\bscal{\underbrace{(u-\Pj{U}\Pxg(\R{U}T^nx)}_{\in U}-
\underbrace{(v+\Pj{U^\perp}\Pxg(\R{U}T^nx))}_{\in U^\perp}}%
{\underbrace{-\Pj{U^\perp}T^nx+v}_{\in U^\perp}}\\
&= 
\bscal{-v-\Pj{U^\perp}\Pxg(\R{U}T^nx)}{-\Pj{U^\perp}T^nx+v}\\
&= 
\bscal{-v+\Pj{U^\perp}T^nx-\Pj{U^\perp}T^{n+1}x}{-\Pj{U^\perp}T^nx+v}\label{e:cake1}\\
&= 
-\bscal{\Pj{U^\perp}T^nx-\Pj{U^\perp}T^{n+1}x-v}{\Pj{U^\perp}T^nx-v}\\
&= 
-\bscal{\Pj{U^\perp}T^nx-\Pj{U^\perp}T^{n+1}x-v}{\Pj{U^\perp}(nv+T^nx)-(n+1)v}\\
&= 
-\bscal{\Pj{U^\perp}T^nx-\Pj{U^\perp}T^{n+1}x-v}{\Pj{U^\perp}(nv+T^nx)}\\
&\qquad -(n+1)\bscal{\Pj{U^\perp}T^nx-\Pj{U^\perp}T^{n+1}x-v}{-v}\\
&= 
-\bscal{\Pj{U^\perp}T^nx-\Pj{U^\perp}T^{n+1}x-v}{\Pj{U^\perp}(nv+T^nx)}\\
&\qquad -(n+1)\scal{T^nx-T^{n+1}x-v}{-v}\\
&= 
-\bscal{\Pj{U^\perp}T^nx-\Pj{U^\perp}T^{n+1}x-v}{\Pj{U^\perp}(nv+T^nx)}\\
&\qquad -(n+1)\underbrace{\scal{(\Id-T)T^nx-v}{0-v}}_{\leq 0 \text{~by \cref{e:defv}}}\\
&\geq 
-\bscal{\Pj{U^\perp}T^nx-\Pj{U^\perp}T^{n+1}x-v}{\Pj{U^\perp}(nv+T^nx)}.
\end{align}
\end{subequations}
Therefore, 
substituting \cref{E:cake2} and \cref{E:cake3}
into \cref{E:cake1}, we obtain 
\begin{subequations}
\begin{align}
g(y) &\geq g\big(\Pxg(\R{U}T^nx)\big) \\
&\qquad + \scal{y-\Pxg(\R{U}T^nx)}{(\Pj{U}T^nx-v)-\Pxg(\R{U}T^nx)} \\
&\qquad 
-\bscal{\Pj{U^\perp}T^nx-\Pj{U^\perp}T^{n+1}x-v}{\Pj{U^\perp}(nv+T^nx)}\\
&\qquad -(n+1)\underbrace{\scal{(\Id-T)T^nx-v}{0-v}}_{\leq 0}\\
&\geq g\big(\Pxg(\R{U}T^nx)\big) \\
&\qquad + \scal{y-\Pxg(\R{U}T^nx)}{(\Pj{U}T^nx-v)-\Pxg(\R{U}T^nx)} \\
&\qquad 
-\bscal{\Pj{U^\perp}T^nx-\Pj{U^\perp}T^{n+1}x-v}{\Pj{U^\perp}(nv+T^nx)},
\end{align}
\end{subequations}
which completes the proof. 
\end{proof}

We are now able to locate weak cluster points
of the shadow sequence $(\Pj{U}T^nx)_\nnn$:

\begin{lemma}
\label{kuchen}
Let $x\in X$ and 
let $y\in (U-v)\cap \dom g$. 
Then there exists a sequence $(\varepsilon_n)_\nnn$ in $\RR$ 
such that
\begin{equation}
\varepsilon_n\to 0
\end{equation}
and
for every $\nnn$, we have 
\begin{subequations}
\label{E:kuchen0}
\begin{align}
g(y) &\geq g\big(\Pxg(\R{U}T^nx)\big) + \varepsilon_n
+(n+1)\bscal{T^nx-T^{n+1}x-v}{v}\\
&\geq g\big(\Pxg(\R{U}T^nx)\big) + \varepsilon_n.
\end{align}
\end{subequations}
Moreover, the sequence
\begin{equation}
\label{E:kuchen10}
\text{$(\Pxg(\R{U}T^nx))_\nnn$ is bounded,
all its weak cluster points are minimizers of $\iota_{U-v}+g$, }
\end{equation}
\begin{equation}
\label{E:kuchen3}
g\big(\Pxg(\R{U}T^nx)\big) \to \inf g(U-v),
\end{equation}
and 
\begin{equation}
\label{E:kuchen4}
(n+1)\bscal{T^nx-T^{n+1}x-v}{v}\to 0. 
\end{equation}
Finally, the sequence
\begin{equation}
\label{E:kuchen9}
\text{$(\Pj{U}T^nx)_\nnn$ is bounded and all its weak cluster points 
are minimizers of $\iota_U+g(\cdot-v)$. }
\end{equation}
\end{lemma}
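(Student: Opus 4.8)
The plan is to read off everything from \cref{cake} combined with the asymptotic data in \cref{l:190517a}. First I would put
\begin{equation*}
\varepsilon_n := \scal{y-\Pxg(\R{U}T^nx)}{(\Pj{U}T^nx-v)-\Pxg(\R{U}T^nx)}-\bscal{\Pj{U^\perp}T^nx-\Pj{U^\perp}T^{n+1}x-v}{\Pj{U^\perp}(nv+T^nx)},
\end{equation*}
so that, after rewriting $-(n+1)\scal{(\Id-T)T^nx-v}{0-v}$ as $(n+1)\scal{T^nx-T^{n+1}x-v}{v}$, the two displayed inequalities of \cref{cake} are \emph{exactly} \eqref{E:kuchen0} (the last inequality in \eqref{E:kuchen0} uses that $(n+1)\scal{T^nx-T^{n+1}x-v}{v}\ge 0$, which is the variational inequality for $v=\Pj{\cran(\Id-T)}(0)$ from \eqref{e:defv}). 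That $\varepsilon_n\to 0$ I would get from \cref{l:190517a}: in the first summand the factor $(\Pj{U}T^nx-v)-\Pxg(\R{U}T^nx)$ equals $(T^nx-T^{n+1}x)-v\to 0$ by \cref{l:190517a}\cref{l:190517a0}, while $y-\Pxg(\R{U}T^nx)$ is bounded by \cref{l:190517a}\cref{l:190517a2}; in the second summand the first factor $\to 0$ by \cref{l:190517a}\cref{l:190517a1.1} and the second is bounded since $(nv+T^nx)_\nnn$ is bounded, again by \cref{l:190517a}\cref{l:190517a2}. Boundedness of $(\Pxg(\R{U}T^nx))_\nnn$ (and of $(\Pj{U}T^nx)_\nnn$) is \cref{l:190517a}\cref{l:190517a2}.

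Next I would extract the value statements \eqref{E:kuchen10} and \eqref{E:kuchen3}. From the last inequality in \eqref{E:kuchen0}, $g(\Pxg(\R{U}T^nx))\le g(y)-\varepsilon_n$ for every $y\in(U-v)\cap\dom g$ and every $\nnn$, so $\limsup_n g(\Pxg(\R{U}T^nx))\le g(y)$; since the left side does not depend on $y$, minimizing over such $y$ gives $\limsup_n g(\Pxg(\R{U}T^nx))\le\inf g(U-v)$ (a real number: \eqref{e:CQ} makes $\iota_U+g(\cdot-v)$ bounded below, and $(U-v)\cap\dom g\ne\varnothing$). The matching lower bound, and the cluster-point claim, hinge on the identity
\begin{equation*}
\Pxg(\R{U}T^nx)=\Pj{U}T^nx-(T^nx-T^{n+1}x),\qquad T^nx-T^{n+1}x\to v,
\end{equation*}
which shows that each weak cluster point $p$ of $(\Pxg(\R{U}T^nx))_\nnn$ produces a weak cluster point $p+v$ of $(\Pj{U}T^nx)_\nnn$; by \cref{l:190517a}\cref{l:190517a1.5} then $p+v\in U\cap(v+\cdom g)$, hence $p\in U-v$. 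Weak lower semicontinuity of the convex lsc function $g$ gives $g(p)\le\liminf g(\Pxg(\R{U}T^nx))$ along the subsequence producing $p$, while $p\in U-v$ forces $g(p)\ge\inf g(U-v)$. Running this along a subsequence that realizes $\liminf_n g(\Pxg(\R{U}T^nx))$ yields $\liminf_n g(\Pxg(\R{U}T^nx))\ge\inf g(U-v)$, hence \eqref{E:kuchen3}; running it along an arbitrary weakly convergent subsequence then gives $g(p)=\inf g(U-v)$, i.e.\ $p$ minimizes $\iota_{U-v}+g$, which is \eqref{E:kuchen10}.

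For \eqref{E:kuchen4} I would use the \emph{first} inequality in \eqref{E:kuchen0}: for every $y\in(U-v)\cap\dom g$ and every $\nnn$,
\begin{equation*}
0\le(n+1)\scal{T^nx-T^{n+1}x-v}{v}\le g(y)-g(\Pxg(\R{U}T^nx))-\varepsilon_n;
\end{equation*}
letting $n\to\infty$ and using \eqref{E:kuchen3} and $\varepsilon_n\to 0$ gives $\limsup_n(n+1)\scal{T^nx-T^{n+1}x-v}{v}\le g(y)-\inf g(U-v)$, and then choosing $y=y_k\in(U-v)\cap\dom g$ with $g(y_k)\to\inf g(U-v)$ forces this $\limsup$ to be $\le 0$; since it is also $\ge 0$, we obtain \eqref{E:kuchen4}. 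Finally \eqref{E:kuchen9} follows from \eqref{E:kuchen10} via the same identity: a weak cluster point $q$ of $(\Pj{U}T^nx)_\nnn$ yields the weak cluster point $q-v$ of $(\Pxg(\R{U}T^nx))_\nnn$, which minimizes $\iota_{U-v}+g$; since $q\in U$ and $g(q-v)=\inf g(U-v)=\inf(\iota_U+g(\cdot-v))$, the point $q$ minimizes $\iota_U+g(\cdot-v)$. The main obstacle is precisely \eqref{E:kuchen4}: one cannot simply pass to the limit in $\scal{T^nx-T^{n+1}x-v}{v}$, because the $(n+1)$ weight demands that it vanish faster than $1/n$; the saving device is the uniform two-sided bound above followed by the double limit — first $n\to\infty$ for fixed $y$, then $y$ along a minimizing sequence — which exploits that the upper bound $g(y)-\inf g(U-v)$ can be made arbitrarily small.
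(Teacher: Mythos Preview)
Your proof is correct and follows essentially the same route as the paper: the same definition of $\varepsilon_n$, the same use of \cref{l:190517a} to show $\varepsilon_n\to 0$, and the same weak-lower-semicontinuity/cluster-point argument to pin down the function values and then squeeze $(n+1)\scal{T^nx-T^{n+1}x-v}{v}$. The only cosmetic difference is that for \eqref{E:kuchen4} the paper plugs in a single $y=c$ already identified as a minimizer (a weak cluster point), whereas you run a double limit with a minimizing sequence $y_k$; both arguments are equivalent once \eqref{E:kuchen3} is in hand.
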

\begin{proof}
\cref{l:190517a}\cref{l:190517a2}\&\cref{l:190517a0}
yield that $(y-\Pxg\R{U}T^nx)_\nnn$ is bounded 
and that $\Pj{U}T^nx-v-\Pxg\R{U}T^nx\to 0$.
Thus
\begin{equation}
\label{E:kuchen1}
\scal{y-\Pxg\R{U}T^nx}{(\Pj{U}T^nx-v)-\Pxg(\R{U}T^nx)}\to 0.
\end{equation}
\cref{l:190517a}\cref{l:190517a1.1}\&\cref{l:190517a0}
yield that $\Pj{U^\perp}T^nx-\Pj{U^\perp}T^{n+1}x-v\to 0$ and
that $(\Pj{U^\perp}(nv+T^nx))_\nnn$ is bounded. 
Hence 
\begin{equation}
\label{E:kuchen2}
-\bscal{\Pj{U^\perp}T^nx-\Pj{U^\perp}T^{n+1}x-v}{\Pj{U^\perp}(nv+T^nx)}\to 0.
\end{equation}
Setting
\begin{align}
\varepsilon_n 
&= \scal{y-\Pxg\R{U}T^nx)}{(\Pj{U}T^nx-v)-\Pxg(\R{U}T^nx)}\\
&\qquad -\bscal{\Pj{U^\perp}T^nx-\Pj{U^\perp}T^{n+1}x-v}{\Pj{U^\perp}(nv+T^nx)},
\end{align}
we see that \cref{E:kuchen0} is a consequence of \cref{cake},
\cref{E:kuchen1} and \cref{E:kuchen2}. 

By \cref{l:190517a}\cref{l:190517a2}, $(\Pxg\R{U}T^nx)_\nnn$ is bounded.
Let $c$ be a weak cluster point of $(\Pxg\R{U}T^nx)_\nnn$,
say $\Pxg\R{U}T^{k_n}x\weakly c$. 
\cref{l:190517a}\cref{l:190517a0} implies that 
\begin{equation}
\label{E:kuchen6}
\Pxg\R{U}T^{k_n}x\weakly c\in U-v. 
\end{equation}
Now abbreviate 
$\alpha_n = (n+1)\bscal{T^nx-T^{n+1}x-v}{v}$. 
Then \cref{E:kuchen0}
yields
\begin{equation}
\label{E:kuchen5}
g(y) \geq g\big(\Pxg(\R{U}T^nx)\big) + \varepsilon_n+ \alpha_n
\geq g\big(\Pxg(\R{U}T^nx)\big) + \varepsilon_n.
\end{equation}
The weak lower semicontinuity of $g$ now yields 
\begin{equation}
\label{E:kuchen7}
g(y)\geq \varlimsup g(\Pxg(\R{U}T^{k_n}x))\geq
\varliminf g(\Pxg(\R{U}T^{k_n}x))\geq g(c).
\end{equation}
Combining with \cref{E:kuchen6}, we deduce that
\begin{equation}
c \in (U-v)\cap \dom g.
\end{equation}
Set $\mu = \inf g(U-v)$. 
Choosing $y= c$ in \cref{E:kuchen7} yields
\begin{equation}
\label{E:kuchen8}
g(\Pxg(\R{U}T^{k_n}x)) \to g(c)\geq \mu.
\end{equation}
Now choosing $y$ so that $g(y)$ is as close to $\mu$ as we like,
we deduce from \cref{E:kuchen7} and \cref{E:kuchen8} that
\begin{equation}
g(\Pxg(\R{U}T^{k_n}x)) \to g(c)= \mu.
\end{equation}
Hence $c$ is a minimizer of $\iota_{U-v}+g$. 
Because $c$ was an \emph{arbitrary} weak cluster point of 
$(\Pxg\R{U}T^nx)_\nnn$, we obtain through a simple proof by contradiction that 
\begin{equation}
g(\Pxg(\R{U}T^{n}x)) \to \mu,
\end{equation}
i.e., \cref{E:kuchen3} holds. 

Next, \cref{E:kuchen5} with $y= c$ yields
$\mu = g(c)\geq \mu+\varlimsup\alpha_n
\geq \mu+\varliminf\alpha_n\geq \mu$.
Thus $\alpha_n\to 0$ and
\cref{E:kuchen4} follows.

Finally, \cref{E:kuchen9} follows from
\cref{E:kuchen10} and \cref{l:190517a}\ref{l:190517a0}.
\end{proof}

\begin{remark}
Note that \eqref{E:kuchen4} is equivalent to 
$n\cdot\scal{T^nx-T^{n+1}x-v}{v}\to 0$. 
On the other hand,
\eqref{Froof2} and \eqref{Froof4} combined with \cite[Chapter~III, Section~14, Theorem on p.~124]{Knopp} 
(or \cite[Problem~3.2.35]{KN}) yields
${n}\cdot\|T^nx-T^{n+1}x-v\|^2\to 0$. 
We do not know whether 
$n\cdot\|T^nx-T^{n+1}x-v\|\to 0$.
\end{remark}
\section{The main result}
\label{sec:main}

We are now ready for the main result.
\textcolor{black}{In the following we set
\begin{equation}
\label{e:def:y_x}
y\colon X\to X\colon x\mapsto \lim_{n\to\infty}\Pj{F}(nv+T^nx),
\end{equation}
which is well defined by \cref{Froof}.
}
\begin{theorem}[{\bf main result}]
\label{main}
Let $x\in X$.
Then 
\begin{equation}
\label{e:main}
\Pj{U}T^nx \weakly \Pj{U}y(x) \in \argmin(\iota_U+g(\cdot-v)), 
\end{equation}
$T^{n+1}x-T^nx+\Pj{U}T^nx=\Pxg(\R{U}T^nx) \weakly -v+\Pj{U}y(x)$, and 
\begin{equation}
g(\Pxg\R{U}T^nx)\to\min(\iota_U+g(\cdot-v)).
\end{equation}
\end{theorem}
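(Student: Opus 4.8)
The plan is as follows. The last displayed identity is essentially already available from \cref{kuchen}: by \eqref{E:kuchen3} we have $g(\Pxg\R{U}T^n x)\to\inf g(U-v)$, while \cref{0601c} (applicable since $Z\neq\varnothing$) gives $\argmin(\iota_U+g(\cdot-v))=Z\neq\varnothing$; since $\iota_U+g(\cdot-v)=(\iota_{U-v}+g)(\cdot-v)$, this yields $\inf g(U-v)=\min(\iota_{U-v}+g)=\min(\iota_U+g(\cdot-v))$, and that identity follows at once. It remains to establish the two weak-convergence assertions, and I would proceed in three steps: (i) show $(\Pj{U}T^n x)_\nnn$ converges weakly to \emph{some} point of $Z$; (ii) identify that point as $\Pj{U}y(x)$; (iii) read off the assertion on $\Pxg\R{U}T^n x$.

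For step~(i), recall from \cref{kuchen} (namely \eqref{E:kuchen9}) together with \cref{0601c} that $(\Pj{U}T^n x)_\nnn$ is bounded and that all of its weak cluster points lie in $Z$; as the sequence is bounded, it then suffices to show it has a \emph{unique} weak cluster point. I would use here the structural facts from the proof of the final proposition of Section~\ref{sec:static}, namely $F=Z+K$ with $Z\subseteq U$ and $K=\Pj{U^\perp}(F)\subseteq U^\perp$, together with \eqref{e:Radu}. By \cref{Froof} and \eqref{e:defF}, the sequence $(nv+T^n x)_\nnn$ is \fejer\ monotone with respect to the nonempty set $F$, so $n\mapsto\norm{(nv+T^n x)-w}$ is nonincreasing---hence convergent---for every $w\in F$. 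Fix $k\in K$ (possible since $F\neq\varnothing$). For any $z,z'\in Z$ we have $z+k\in F$ and $z'+k\in F$, so both $\norm{(nv+T^n x)-(z+k)}^2$ and $\norm{(nv+T^n x)-(z'+k)}^2$ converge; subtracting, expanding the squares, and using that $v\in U^\perp$, that $z-z'\in U$, and that $\Pj{U}(nv+T^n x)=\Pj{U}T^n x$ (so that $\scal{nv+T^n x}{z-z'}=\scal{\Pj{U}T^n x}{z-z'}$), one deduces that $\scal{\Pj{U}T^n x}{z-z'}$ converges as $n\to\infty$, for all $z,z'\in Z$. If $p$ and $p'$ are two weak cluster points of $(\Pj{U}T^n x)_\nnn$, then $p,p'\in Z$; choosing $z=p$ and $z'=p'$ and passing to the respective subsequences in $\scal{\Pj{U}T^n x}{p-p'}$ yields $\scal{p}{p-p'}=\scal{p'}{p-p'}$, i.e.\ $\norm{p-p'}^2=0$, so $p=p'$. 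Therefore $(\Pj{U}T^n x)_\nnn$ converges weakly to some $p\in Z=\argmin(\iota_U+g(\cdot-v))$.

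For step~(ii), I would combine \cref{restproj} (applied to the nonempty closed convex subset $Z$ of $U$) with \eqref{e:Radu}: since $\Pj{U}(nv+T^n x)=\Pj{U}T^n x$, we obtain $\Pj{Z}(\Pj{U}T^n x)=\Pj{Z}(nv+T^n x)=\Pj{U}\Pj{F}(nv+T^n x)$, and the right-hand side converges strongly to $\Pj{U}y(x)$ by the definition \eqref{e:def:y_x} of $y$ (a limit that exists by \cref{Froof}) and the continuity of $\Pj{U}$. On the other hand, $\Pj{U}T^n x\weakly p$ and the standing assumption \eqref{e:PZweakly} give $\Pj{Z}(\Pj{U}T^n x)\weakly\Pj{Z}p=p$. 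Comparing these two limits forces $p=\Pj{U}y(x)$, which is \eqref{e:main}. For step~(iii), the identity in the statement gives $\Pxg\R{U}T^n x=(T^{n+1}x-T^n x)+\Pj{U}T^n x$; by \eqref{e:190517b} the first summand converges strongly to $-v$ while, by what precedes, $\Pj{U}T^n x\weakly\Pj{U}y(x)$, so $\Pxg\R{U}T^n x\weakly -v+\Pj{U}y(x)$, completing the argument.

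I expect step~(i) to be the crux. The object one would naturally feed into the classical \fejer-monotonicity principle is $(nv+T^n x)_\nnn$, but its $U^\perp$-component $\Pj{U^\perp}(nv+T^n x)$ is not controlled---it need not even converge---so one cannot assert that \emph{all} weak cluster points of $(nv+T^n x)_\nnn$ lie in $F$; the argument above circumvents this by transferring both the \fejer\ inequality and the cluster-point location from \cref{kuchen} onto the $U$-component, on which both facts are genuinely at hand.
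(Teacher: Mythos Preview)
Your proof is correct, and steps~(ii)--(iii) together with the function-value argument coincide with the paper's proof almost verbatim: the paper also uses $\Pj{Z}\Pj{U}(nv+T^nx)=\Pj{U}\Pj{F}(nv+T^nx)\to\Pj{U}y(x)$ (via \eqref{e:Radu} and \cref{restproj}) and the weak-to-weak continuity of $\Pj{Z}$ (assumption \eqref{e:PZweakly}) to pin down every weak cluster point of $(\Pj{U}T^nx)_\nnn$.

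The only substantive difference is your step~(i), the Opial-type argument based on $F=Z+K$. This step is correct but unnecessary: the paper applies the logic of your step~(ii) not to the full sequence but to an \emph{arbitrary} weakly convergent subsequence $\Pj{U}T^{k_n}x\weakly z\in Z$, obtaining $\Pj{Z}\Pj{U}T^{k_n}x\weakly\Pj{Z}z=z$ and hence $z=\Pj{U}y(x)$ in one stroke---so uniqueness and identification are achieved simultaneously, without ever invoking \fejer\ monotonicity beyond what is already encapsulated in \eqref{Froof3}. Thus what you flag as ``the crux'' is bypassed entirely. One incidental gain of your detour is that step~(i) establishes weak convergence of $(\Pj{U}T^nx)_\nnn$ to \emph{some} point of $Z$ without using \eqref{e:PZweakly}; but since step~(ii) still needs that assumption to identify the limit, this does not relax the hypotheses of the theorem as stated.
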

\begin{proof}
For brevity, we write $y=y(x)$. 	
Because $\Pj{U}$ is continuous, we have 
\begin{equation}
\Pj{U}\Pj{F}(nv+T^nx)\to \Pj{U}y.
\end{equation}
On the other hand,
$\Pj{U}\Pj{F} = \Pj{Z} = \Pj{Z}\Pj{U}$ 
by \cref{e:Radu} and \cref{e:restproj}.
Invoking the fact that $v\in U^\perp$ (see \cref{e:vUperp!}),
we conclude altogether that
\begin{equation}
\label{e:main1}
\Pj{Z}\Pj{U}T^nx = \Pj{Z}\Pj{U}(nv+T^nx)\to \Pj{U}y.
\end{equation}
Recall from \eqref{E:kuchen9} and \eqref{e:0601c} that 
$(\Pj{U}T^nx)_\nnn$ is bounded and that 
all its cluster points lie in $\argmin(\iota_U+g(\cdot-v)) = Z$.
Now let $z$ be an arbitrary weak cluster point of $(\Pj{U}T^nx)_\nnn$,
say $\Pj{U}T^{k_n}x\weakly z\in Z\subseteq U$.
Then $\Pj{Z}\Pj{U}T^{k_n}x\weakly \Pj{Z}z=z$ using \eqref{e:PZweakly}. 
Combining with \cref{e:main1}, 
we deduce that $z = \Pj{U}y$.
Hence \emph{every} weak cluster point of $(\Pj{U}T^nx)_\nnn$ 
coincides with $\Pj{U}y$.
In view of the boundedness of $(\Pj{U}T^nx)_\nnn$,
we obtain \eqref{e:main}.
The remainder follows from
\cref{l:190517a}\cref{l:190517a0} and 
\eqref{E:kuchen3}.
\end{proof}

\begin{example}[\bf linear-convex feasibility]
Suppose that $g=\iota_W$, 
where $W$ is a nonempty closed convex subset of $X$
such that $U\cap(v+W)\neq \fady$.
Then, $0\in \dom g^*$ which implies that 
$0\in U^\perp+\dom g^*$, hence \cref{e:CQ} is verified.
Moreover, 
$v=\Pj{\overline{U-W}}(0)$ by \cite[Proposition~3.16]{Sicon}
and $(\forall x\in X)$
$\Pj{U}T^nx \weakly \Pj{U}y \in U\cap(v+W)$,
 where $y = \lim_{n\to\infty}\Pj{F}(nv+T^nx)$
 by \cref{main}.
\end{example}

\begin{example}
\label{ex:good:II}
Suppose that $W$ is a linear subspace of $X$ such that 
$\{0\}\subsetneqq W \subsetneqq U^\perp$.
Let $w\in W\smallsetminus\{0\}$, 
let $b\in (U^\perp\cap W^{\perp})\smallsetminus\{0\}$,
and suppose that
$g=\tfrac{1}{2}\norm{\cdot}^2+\scal{w}{\cdot}+\iota_{-b+W}$.
Let $x\in X$. 
Then the following hold:
\begin{enumerate}
\item 
\label{ex:good:II:-i}
$\partial g= w+\Id+\Nc{-b+W}$. 
\item 
\label{ex:good:II:0}
$U\cap W=\{0\}$. 
\item 
\label{ex:good:II:i}
$\dom g = \dom \partial g = -b+W$, 
$\dom g^*=X$, and $0\in U^\perp+\dom g^*=X$.
\item
\label{ex:good:II:ii}
$v=b\in U^\perp\cap W^{\perp}$. 
\item
\label{ex:good:II:iii}
$-v+\Nc{U}=\Nc{U}$.
\item
\label{ex:good:II:vii}
$Z=\{0\}$.
\item
\label{ex:good:II:v-}
$\Pxg = -b -\tfrac{1}{2}w+\tfrac{1}{2}\Pj{W}$.
\item
\label{ex:good:II:v}
$T=-b-\tfrac{1}{2}w+\Id-\Pj{U}-\tfrac{1}{2}\Pj{W}$.
\item
\label{ex:good:II:vi}
$F=U^\perp\cap(-w+W^\perp)$.
\item
\label{ex:good:II:vi.5}
$0\notin F$. 
\item
\label{ex:good:II:vi+}
$(\forall n\geq 1)$
$T^nx = (\Pj{U^\perp}-(1-\tfrac{1}{2^n})\Pj{W})x-nb -(1-\tfrac{1}{2^n})w$. 
\item
\label{ex:good:II:vi++}
$(\forall n\geq 1)$
$\Pj{U}T^nx = 0$. 
\end{enumerate}
\end{example}
\begin{proof}
Note that $U+W \subsetneqq U+U^\perp = X$ and thus 
$U^\perp \cap W^\perp =(U+W)^\perp \supsetneqq \{0\}$.
Hence the choice of $b$ is possible. 
\ref{ex:good:II:-i}: Clear. 
\ref{ex:good:II:0}:
Indeed, 
$\{0\}\subseteq U\cap W\subseteq U\cap U^\perp=\{0\}$.
\ref{ex:good:II:i}:
It is clear that $\dom g = \dom \partial g = -b+W$.
Because $\lim_{\|x\|\to+\infty} g(x)/\|x\|=+\infty$,
it follows that $\dom g^*=\dom \partial g^*=X$ by, e.g.,
\cite[Proposition~14.15 and Proposition~16.27]{BC}.
\ref{ex:good:II:ii}:
Using \eqref{e:vUperp!} and \ref{ex:good:II:i},
we obtain
$v
=\Pj{\overline{U-\dom g}}(0)
=\Pj{b+U+W}(0)
=b+\Pj{U+W}(0-b)
=\Pj{(U+W)^\perp}(b)
=\Pj{U^\perp\cap W^{\perp}}(b)=b$.
\ref{ex:good:II:iii}:
Clear from \ref{ex:good:II:ii}.
\ref{ex:good:II:vii}:
This follows from \eqref{e:Znonempty},
\ref{ex:good:II:-i}, \ref{ex:good:II:0},
and \ref{ex:good:II:i}.
\ref{ex:good:II:v-}:
Set $y=-b-\tfrac{1}{2}w+\tfrac{1}{2}\Pj{W}x$.
Then $y\in -b+W$.
Thus,
$\Pj{W^\perp}x \in -2b+W^\perp$
$\siff$
$x\in 2(-b-\tfrac{1}{2}w+\tfrac{1}{2}\Pj{W}x)+w+W^\perp
= 2y+w+W^\perp = y+w+y+\Nc{-b+W}(y) = (\Id+\partial g)(y)$
$\siff$
$y=\Pxg(x)$.
\ref{ex:good:II:v}:
This follows from \eqref{e:defT} and \ref{ex:good:II:v-}. 
\ref{ex:good:II:vi}:
Using \eqref{e:defF} and \ref{ex:good:II:v}, we obtain
$x\in F$
$\siff$
$x=T(x+v)=T(x+b)$
$\siff$
$x=-b-\tfrac{1}{2}w+x+b-\Pj{U}(x+b)-\tfrac{1}{2}\Pj{W}(x+b)$
$\siff$
$0=\tfrac{1}{2}w+\tfrac{1}{2}\Pj{U}x+\tfrac{1}{2}\Pj{W}x$
$\siff$
[$x\in U^\perp$ and $x\in -w+W^\perp$].
\ref{ex:good:II:vi.5}:
We have the equivalences
$0\in F$
$\siff$
$0=T(0+v)$
$\siff$
$0=T(b)$
$\siff$
$0=-b-\tfrac{1}{2}w+b-\Pj{U}b-\tfrac{1}{2}\Pj{W}b$
$\siff$
$0=-\tfrac{1}{2}w$, which is absurd. 
\ref{ex:good:II:vi+}:
This follows from \ref{ex:good:II:vi} and induction.
\ref{ex:good:II:vi++}:
Clear from \ref{ex:good:II:vi+}.
\end{proof}

\begin{remark}
\label{r:whatsnew}
We point out that in \cite[Theorem~4.4]{BM:MPA17}
the authors provide an instance where the shadow sequence converges.
The proof in \cite{BM:MPA17} 
critically relies on the assumption 
that $Z\subseteq F$. 
Our new result does not require 
this assumption. 
Indeed, by 
\cref{ex:good:II}\ref{ex:good:II:vii}\&\ref{ex:good:II:vi.5}, 
$Z=\{0\}$ and $Z\cap F=\varnothing$.
\end{remark}

\begin{example}
\label{ex:bad:cq:fail}
Suppose that $X$ is finite-dimensional\footnote{
 We require this assumption in the proof of 
item~\ref{ex:bad:cq:fail:iii} which relies on \cite{MOR}.
}, that $U\neq\{0\}$, 
let $u^*\in U\smallsetminus \{0\}$, 
suppose that\footnote{Given a nonempty closed convex subset $C$ of $X$, 
the associated distance function to the set $C$ is denoted by 
$\dist{C}$.}
$g=\tfrac{1}{2}\dist{U}^2+\scal{u^*}{\cdot}$,
and let $x\in X$. 
Then the following hold:
\begin{enumerate}
\item 
\label{ex:bad:cq:fail:0}
$\partial g = \nabla g = u^*+\Pj{U^\perp}$. 
\item
\label{ex:bad:cq:fail:i}
$U-\dom \grad g=U-\dom g = X$.
\item
\label{ex:bad:cq:fail:ii}
$\ran\Nc{U}+\ran\partial g 
= U^\perp +\dom g^*=U^\perp+\dom \partial g^* = u^*+U^\perp$ is closed.
\item
\label{ex:bad:cq:fail:ii.5}
$0\not\in \overline{U^\perp +\dom g^*} = \overline{\ran \Nc{U}+\ran\partial g}$. 
\item
\label{ex:bad:cq:fail:iii}
$v=u^*\in U\smallsetminus\{0\}$.
\item
\label{ex:bad:cq:fail:iv}
$Z=U$.
\item
\label{ex:bad:cq:fail:iii.5}
$\Pxg = -u^*+\Id-\tfrac{1}{2}\Pj{U^\perp}$.
\item
\label{ex:bad:cq:fail:vi}
$T=\Pxg = -u^*+\Id-\tfrac{1}{2}\Pj{U^\perp}$.
\item
\label{ex:bad:cq:fail:vi.5}
$F=U$. 
\item
\label{ex:bad:cq:fail:vii}
$(\forall\nnn)$
$T^n x =-nu^*+\Pj{U}x+\tfrac{1}{2^n}\Pj{U^\perp}x$.
\item
\label{ex:bad:cq:fail:viii}
$(\forall\nnn)$
$\Pj{U}T^n x =-nu^*+\Pj{U}x$.
\item
\label{ex:bad:cq:fail:viii.5}
$(\forall\nnn)$
$\norm{T^nx} \geq \norm{\Pj{U}T^n x}\ge n\norm{u^*}-\norm{\Pj{U}x}\to +\infty$.
\end{enumerate}
\end{example}
\begin{proof}
\ref{ex:bad:cq:fail:0}:
Clear since $\nabla \tfrac{1}{2}\dist{U}^2 = \Id-\Pj{U} = \Pj{U^\perp}$. 
Note that $\grad g=u^*+\Id-\Pj{U}=u^*+\Pj{U^\perp}$.
\ref{ex:bad:cq:fail:i}:
$U-\dom\partial g = U-X=X$.
\ref{ex:bad:cq:fail:ii}:
$\dom \partial g^*
=\ran \nabla g 
=u^*+U^\perp$ is closed.
On the other hand, $\dom \partial g^*$ is a dense
subset of $\cdom g^*$.
Hence $\dom\partial g^*=\dom g^*=u^*+U^\perp$
and thus $\ran \Nc{U} + \ran\partial g = U^\perp + (u^*+U^\perp) = u^*+U^\perp$.
\ref{ex:bad:cq:fail:ii.5}:
Clear from \ref{ex:bad:cq:fail:ii}
and the assumption that $u^*\neq 0$. 
\ref{ex:bad:cq:fail:iii}:
By \cite[Proposition~6.1]{MOR}, \ref{ex:bad:cq:fail:i}, 
and \ref{ex:bad:cq:fail:ii},
we have $v
=\Pj{\overline{U-\dom g}\ \cap\ \overline{U^\perp+\dom g^*}}(0)
=\Pj{u^*+U^\perp}(0)
=u^*+\Pj{U^\perp}(0-u^*)
=\Pj{U}(u^*)=u^*
$.
\ref{ex:bad:cq:fail:iv}:
Using \eqref{e:Znonempty}, \ref{ex:bad:cq:fail:0},
and \ref{ex:bad:cq:fail:iii}, we have
$x\in Z$
$\siff$
$v\in \Nc{U}(x)+\partial g(x-v)$
$\siff$
[$x\in U$ and $u^*\in U^\perp+u^*+\Pj{U^\perp}(x-u^*)$]
$\siff$
$x\in U$.
\ref{ex:bad:cq:fail:iii.5}:
Set $y=-u^*+x-\tfrac{1}{2}\Pj{U^\perp}x$.
By \ref{ex:bad:cq:fail:0} and \ref{ex:bad:cq:fail:iii},
$y+\nabla g(y) = 
(-u^*+x-\tfrac{1}{2}\Pj{U^\perp}x)
+(u^*+\Pj{U^\perp}(-u^*+x-\tfrac{1}{2}\Pj{U^\perp}x))
= x$. 
Thus $y=\Pxg(x)$ as claimed. 
\ref{ex:bad:cq:fail:vi}:
Using \eqref{e:defT} and \ref{ex:bad:cq:fail:iii.5},
we obtain $T
=\Id-\Pj{U}+\Pxg \R{U}
=\Pj{U^\perp}+\Pxg(\Pj{U}-\Pj{U^\perp})
=\Pj{U^\perp}-u^*+(\Id-\tfrac{1}{2}\Pj{U^\perp})
 (\Pj{U}-\Pj{U^\perp})
 =-u^*+\Pj{U}+\tfrac{1}{2}\Pj{U^\perp}
 =-u^*+\Pj{U}+\Pj{U^\perp}-\tfrac{1}{2}\Pj{U^\perp}
 =-u^*+\Id-\tfrac{1}{2}\Pj{U^\perp}
 =\Pxg
 $.
\ref{ex:bad:cq:fail:vi.5}:
Using \eqref{e:defF}, \ref{ex:bad:cq:fail:iii}, and 
\ref{ex:bad:cq:fail:vi}, we have
$x\in F$
$\siff$
$x=T(x+v)$
$\siff$
$x=-u^*+\Pj{U}x+\tfrac{1}{2}\Pj{U^\perp}(x+v)$
$\siff$
$x=\Pj{U}x+\tfrac{1}{2}\Pj{U^\perp}x$
$\siff$
$x\in U$. 
\ref{ex:bad:cq:fail:vii}:
This follows from \ref{ex:bad:cq:fail:vi} and
\ref{ex:bad:cq:fail:iii} by a straight-forward induction.
\ref{ex:bad:cq:fail:viii}:
Apply $\Pj{U}$ to \ref{ex:bad:cq:fail:vii} and use 
\ref{ex:bad:cq:fail:iii}.
\ref{ex:bad:cq:fail:viii.5}:
This follows from \ref{ex:bad:cq:fail:viii}.
\end{proof}

\begin{remark}
\cref{ex:bad:cq:fail} illustrates the importance
of the constraint qualification \eqref{e:CQ};
indeed, it provides a scenario where
\eqref{e:CQ} fails (see item \ref{ex:bad:cq:fail:ii.5}) and the
shadow sequence never converges (see item \ref{ex:bad:cq:fail:viii.5}). 
\end{remark}

\begin{remark}
While \cref{main} guarantees that $(\Pj{U}T^nx)_\nnn$ 
converges weakly to a minimizer of $\iota_U+g(\cdot-v)$, 
we leave numerical experiments and the 
development of 
meaningful termination criteria as topics for future research.
A promising starting point appears to be the analysis in 
\cite[Section~5]{Banjac}. 
\end{remark}

	The remaining results in this section were 
	inspired by a referee's question.
\begin{theorem}[\bf switching the order of the operators]
	\label{thm:switch}
Set $\widetilde{T}=\Id-\Pxg+\Pj{U}\R{g}=\Id-\Pxg+\Pj{U}(2\Pxg-\Id)$.
Suppose that\footnote{This assumption is satisfied if, for instance,
$X$ is finite-dimensional.
To see this, proceed as in the proof
of \cref{p:vUperp!}\ref{p:vUperp!:ii},
with the roles of $\iota_U $ and $g$ switched.
}
 $\Pj{\overline{\ran}(\Id -\widetilde{T})}(0)=-v$.
Let $x\in X$.
Then the following hold:
\begin{enumerate}
	\item 
	\label{thm:switch:i}
	$(\forall \nnn)$ $\Pj{U}\widetilde{T}^n=\Pj{U}T^n\R{U}$.
		\item 
	\label{thm:switch:i:ii}
    $\widetilde{T}^nx-\widetilde{T}^{n+1}x
    =\Pxg\widetilde{T}^nx-2\Pj{U}\Pxg\widetilde{T}^nx+\Pj{U}\widetilde{T}^nx
    =\Pj{U}\widetilde{T}^nx-\R{U}\Pxg\widetilde{T}^nx\to -v$.
	\item 
	\label{thm:switch:ii}
	$\Pj{U}\widetilde{T}^nx-\Pj{U}\Pxg\widetilde{T}^n x\to\Pj{U}(-v)=0$.
	\item
	\label{thm:switch:iii}
	$\Pj{U}T^n x\weakly\Pj{U}y(x) \in \argmin (\iota_U+g(\cdot-v))$.
	\item
	\label{thm:switch:iv}
	$\Pj{U}\widetilde{T}^n x\weakly\Pj{U}y(\R{U}x) \in \argmin (\iota_U+g(\cdot-v))$.
	\item
	\label{thm:switch:v}
    $\Pxg\widetilde{T}^n x\weakly\Pj{U}y(\R{U}x)-v\in \dom g$.
\end{enumerate}		
\end{theorem}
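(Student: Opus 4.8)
The plan is to reduce the whole theorem to one algebraic fact: \emph{$\widetilde{T}$ is conjugate to $T$ via the reflector $\R{U}$.} First I would put both operators in reflector form. Substituting $\R{g}=2\Pxg-\Id$ and $\R{U}=2\Pj{U}-\Id$ into the definitions gives $T=\thalb(\Id+\R{g}\R{U})$ and $\widetilde{T}=\thalb(\Id+\R{U}\R{g})$. Since $U$ is a closed linear subspace, $\R{U}^2=\Id$, so $\R{U}T\R{U}=\thalb(\R{U}^2+\R{U}\R{g}\R{U}^2)=\widetilde{T}$, and an immediate induction yields $\widetilde{T}^n=\R{U}T^n\R{U}$ for every $\nnn$. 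Combined with the elementary identities $\Pj{U}\R{U}=\Pj{U}=\R{U}\Pj{U}$ (because $\R{U}=\Pj{U}-\Pj{U^\perp}$) and $\Pj{U}^2=\Pj{U}$, this proves \ref{thm:switch:i}: $\Pj{U}\widetilde{T}^n=\Pj{U}\R{U}T^n\R{U}=\Pj{U}T^n\R{U}$. It also gives $\Id-\widetilde{T}=\R{U}(\Id-T)\R{U}$, and since $\R{U}$ is a linear isometry, $\Pj{\cran(\Id-\widetilde{T})}(0)=\R{U}v=-v$ by \eqref{e:defv} and \eqref{e:vUperp!}, consistent with the standing hypothesis on $\widetilde{T}$.

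For \ref{thm:switch:i:ii}, the two displayed equalities are bookkeeping: starting from $\Id-\widetilde{T}=\Pxg-\Pj{U}\R{g}$ and substituting $\R{g}=2\Pxg-\Id$, $\R{U}=2\Pj{U}-\Id$, one regroups into both advertised forms. For the limit I would not run a separate fixed-point analysis for $\widetilde{T}$; instead I would use conjugacy: $\widetilde{T}^nx-\widetilde{T}^{n+1}x=\R{U}\big(T^n(\R{U}x)-T^{n+1}(\R{U}x)\big)\to\R{U}v=-v$, by \eqref{e:190517b} applied to $T$ at the starting point $\R{U}x$ together with continuity of $\R{U}$ and $\R{U}v=-v$. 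Then \ref{thm:switch:ii} follows by applying $\Pj{U}$ to the second form in \ref{thm:switch:i:ii} and using $\Pj{U}\R{U}=\Pj{U}$, $\Pj{U}^2=\Pj{U}$, continuity of $\Pj{U}$, and $\Pj{U}(-v)=0$.

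The weak-convergence statements are then harvested from \cref{main}. Item \ref{thm:switch:iii} is exactly \cref{main}. For \ref{thm:switch:iv}, item \ref{thm:switch:i} gives $\Pj{U}\widetilde{T}^nx=\Pj{U}T^n(\R{U}x)$, so \cref{main} at the point $\R{U}x$ yields $\Pj{U}\widetilde{T}^nx\weakly\Pj{U}y(\R{U}x)\in\argmin(\iota_U+g(\cdot-v))$. For \ref{thm:switch:v}, I would combine the strong limit in \ref{thm:switch:i:ii} with \ref{thm:switch:iv}: since $\Pj{U}\widetilde{T}^nx-\R{U}\Pxg\widetilde{T}^nx\to-v$ and $\Pj{U}\widetilde{T}^nx\weakly\Pj{U}y(\R{U}x)$, a weakly-convergent-plus-strongly-convergent argument gives $\R{U}\Pxg\widetilde{T}^nx\weakly\Pj{U}y(\R{U}x)+v$; applying the bounded linear (hence weak-to-weak continuous) map $\R{U}$ and using $\R{U}\Pj{U}=\Pj{U}$, $\R{U}v=-v$ gives $\Pxg\widetilde{T}^nx\weakly\Pj{U}y(\R{U}x)-v$. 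Finally $\Pj{U}y(\R{U}x)\in\argmin(\iota_U+g(\cdot-v))\subseteq U\cap(v+\dom g)$ by \cref{0601b}, so the weak limit lies in $\dom g$.

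I do not anticipate a genuine obstacle: the theorem collapses onto the conjugacy identity $\widetilde{T}=\R{U}T\R{U}$ together with \cref{main}. The only points that need care are the repeated bookkeeping with $\R{U}$ --- in particular that $\R{U}$ fixes the outputs of $\Pj{U}$ and negates vectors of $U^\perp$ --- and the standard-but-easily-fumbled step in \ref{thm:switch:v} where a weakly convergent sequence is added to a strongly convergent one and the sum is then pushed through the bounded operator $\R{U}$.
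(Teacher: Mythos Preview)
Your proof is correct and follows essentially the same line as the paper: both hinge on the conjugacy $\widetilde{T}=\R{U}T\R{U}$ (the paper imports this from \cite{BM:order15}, you derive it directly from the reflector form) and then harvest everything from \cref{main} applied at $\R{U}x$. One small but worthwhile difference: for \ref{thm:switch:i:ii} the paper invokes the analogue of \eqref{e:190517b} for $\widetilde{T}$ directly, using the stated hypothesis $\Pj{\cran(\Id-\widetilde{T})}(0)=-v$, whereas you pull back through $\R{U}$ and use \eqref{e:190517b} for $T$ itself---and in doing so you observe that the hypothesis is in fact a consequence of conjugacy and \eqref{e:vUperp!}, hence redundant under the paper's standing assumptions.
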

\begin{proof}
Observe that $\Pj{U}\R{U}=\Pj{U}$ and $\R{U}^2=\Id$.
\ref{thm:switch:i}:
Using \cite[Theorem~2.7(i)]{BM:order15} we learn that
$(\forall \nnn)$
$\Pj{U}\widetilde{T}^n
=\Pj{U}\R{U}\widetilde{T}^n\R{U}\R{U}
=\Pj{U}T^n\R{U}$.
\ref{thm:switch:i:ii}:
$\widetilde{T}^n-\widetilde{T}^{n+1}
=\Pxg\widetilde{T}^n-\Pj{U}\R{g}\widetilde{T}^n
=\Pxg\widetilde{T}^n-2\Pj{U}\Pxg\widetilde{T}^n+\Pj{U}\widetilde{T}^n
=\Pj{U}\widetilde{T}^n-\R{U}\Pxg\widetilde{T}^n$.
Now combine with \cref{e:190517b}.
\ref{thm:switch:ii}:
Recall that $-v\in U^\perp$ by \cref{e:vUperp!}.
Now combine with \ref{thm:switch:i:ii}.
\ref{thm:switch:iii}:
This is \cref{main}.
\ref{thm:switch:iv}:
Combine \ref{thm:switch:i}
and \ref{thm:switch:iii} with $x$
replaced by $\R{U}x$.
\ref{thm:switch:v}:
It follows from \ref{thm:switch:ii} and 
\ref{thm:switch:iv}
that 
$\Pj{U}\Pxg\widetilde{T}^n x\weakly\Pj{U}y(\R{U}x)$.
Now combine with \ref{thm:switch:i:ii}.
\end{proof}

\textcolor{black}{
In the setting of \cref{main}, we point out 
that no general conclusion can be drawn about the sequence
$(\Pxg T^n x)_\nnn$ as we illustrate below. 
\begin{example}[\bf $(\Pxg T^n x)_\nnn$ may converge]
	Suppose that $(U, g)=(X, \iota_X)$.
	Then $\Pj{U}=\Pxg=T=\widetilde{T}=\Id$.
	Hence, $\ran(\Id-T)=\ran(\Id-\widetilde{T})=\{0\}$.
	Consequently, $v=-v=0$ and 
	$(\forall \nnn)$ $(\forall x\in X)$ 
	$\Pxg T^nx=x=\lim_{n\to \infty}\Pxg T^nx$.
\end{example}	
\begin{example}[\bf $(\Pxg T^n x)_\nnn$ may have no cluster points]
	Suppose that $X=\RR^2$, that $U=\RR\times \{0\}$,
	that $C=\epi(\abs{\cdot}+1)$ and that $g=\iota_C$.
	Let $x\in \left[-1,1\right]\times \{0\}$.
	Using induction, one can show that 
	 $(\forall n\in \{1,2,\ldots\})$ $T^n x=(0,n)\in C$.
	Consequently, $\norm{\Pxg T^n x}=\norm{\Pj{C} T^n x}=n\to +\infty$.
\end{example}	
}

\section{Minimizing the sum of finitely many functions}
\label{sec:app}

In this section we assume for simplicity 
that 
\begin{empheq}[box=\mybluebox]{equation}
\text{$X$ is finite-dimensional,} 
\end{empheq}
that $m\in \{2,3,\ldots\}$,
that $I=\{1,2,\ldots, m\}$,
 and that
\begin{empheq}[box=\mybluebox]{equation}
\text{$g_i\colon X\to \left]-\infty,+\infty\right]$
is convex, lower semicontinuous, and proper,}
\end{empheq}
for every $i\in I$.
Furthermore, we set (see also \cite{BC} and \cite{Comb09})
\begin{equation}
\begin{cases}
&\bX=\bigoplus_{i\in I} X,
\\
&\bg=\bigoplus_{i\in I}g_i,
\\
&{\bDelta}=\menge{(x,x,\ldots,x)\in \bX}{x\in X},
\\
&\bZ=\menge{\bx\in \bX }{\bv\in N_\bDelta(\bx)+\partial \bg (\bx-\bv)},
\\
& (\forall i\in I) \quad D_i=\overline{\dom}  g_i,
\\
&\bD=\bigtimes_{i\in I} D_i,
\\
&\bv=(v_i)_{i\in I}=\Pj{\cran(\Id-\bT)} (\bzero),
\\
&\bT=\Id-\Pj{\bDelta}+\Px{\bg}\R{\bDelta},
\\
&\bj\colon X\to \bDelta \colon x\mapsto (x,x,\ldots,x),
\\
&\be \colon \bX\to X\colon (x_i)_{i\in I}\mapsto \tfrac{1}{m}\big(\sum_{i\in I}x_i\big).
\end{cases}
\end{equation}
\begin{remark}
In passing we point out that, by
\cite[Theorem~2.16]{BMWnear},
we have $(\forall i \in I)$ 
$D_i=\overline{\dom}\ \partial g_i=\overline{\dom}\ g_i$.
\end{remark}

\begin{fact}
\label{prop:prod:sp}
Write $\bx=(x_i)_{i\in I}\in \bX$.
Then the  following hold: 
\begin{enumerate}
\item
\label{prop:prod:sp:0}
$\bg \colon \bX\to \left]-\infty,+\infty\right]$ 
is convex, lower semicontinuous, and proper.
\item
\label{prop:prod:sp:i}
$\bg^*=\bigoplus_{i\in I}g_i^*$.
\item
\label{prop:prod:sp:ii}
$\partial \bg=\bigtimes_{i\in I}\partial g_i$.
\item
\label{prop:prod:sp:ii.5}
$\Pj{\bDelta}\bx=\bj\big(\tfrac{1}{m}\sum_{i\in I}x_i\big)$.
\item
\label{prop:prod:sp:iii}
$\Px{\bg}=\bigtimes_{i\in I}\Px{g_i}$.
\item  
\label{prop:prod:sp:iv}
$\bDelta^\perp=\menge{\bu\in \bX}{\sum_{i\in I}u_i=0}$.
\end{enumerate}
\end{fact}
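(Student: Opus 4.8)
The plan is to establish the six items by unwinding the separable structure of $\bg=\bigoplus_{i\in I}g_i$; each is a standard fact about direct sums (all available in \cite{BC}), and below I indicate the short direct arguments I would record. For \ref{prop:prod:sp:0}, I would first note that since each $g_i$ is proper it never attains $-\infty$, so $\bg(\bx)=\sum_{i\in I}g_i(x_i)$ is an unambiguous element of $\RX$ for every $\bx=(x_i)_{i\in I}\in\bX$; a finite sum of convex (resp.\ lower semicontinuous) $\RX$-valued functions with no $(+\infty)+(-\infty)$ conflict is again convex (resp.\ lower semicontinuous), and choosing $x_i\in\dom g_i$ for each $i\in I$ produces a point of $\dom\bg$, so $\bg$ is convex, lower semicontinuous, and proper.

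Items \ref{prop:prod:sp:i} and \ref{prop:prod:sp:ii} are the usual ``separation of variables'' computations. For the conjugate I would write
\[
\bg^*(\bu)=\sup_{\bx\in\bX}\Big(\sum_{i\in I}\scal{x_i}{u_i}-\sum_{i\in I}g_i(x_i)\Big)=\sum_{i\in I}\sup_{x_i\in X}\big(\scal{x_i}{u_i}-g_i(x_i)\big)=\sum_{i\in I}g_i^*(u_i).
\]
For the subdifferential, $\bu\in\partial\bg(\bx)$ means $\bg(\by)\ge\bg(\bx)+\scal{\by-\bx}{\bu}$ for all $\by\in\bX$; testing this on points $\by$ that agree with $\bx$ in all but one coordinate yields $u_i\in\partial g_i(x_i)$ for each $i\in I$, and conversely summing the $m$ coordinatewise subgradient inequalities recovers the inequality for $\bg$, so $\partial\bg=\bigtimes_{i\in I}\partial g_i$.

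For \ref{prop:prod:sp:iv} and \ref{prop:prod:sp:ii.5} I would prove the description of $\bDelta^\perp$ first: for $\bu=(u_i)_{i\in I}\in\bX$ we have $\bu\in\bDelta^\perp$ iff $\scal{\bu}{\bj(x)}=\sum_{i\in I}\scal{u_i}{x}=\scal{\sum_{i\in I}u_i}{x}=0$ for every $x\in X$, i.e.\ iff $\sum_{i\in I}u_i=0$. Since $\bDelta$ is a closed linear subspace, $\Pj{\bDelta}\bx$ is the unique $\bj(z)\in\bDelta$ with $\bx-\bj(z)\in\bDelta^\perp$; by \ref{prop:prod:sp:iv} this forces $\sum_{i\in I}(x_i-z)=0$, i.e.\ $z=\tfrac1m\sum_{i\in I}x_i$, giving \ref{prop:prod:sp:ii.5}. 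Finally, for \ref{prop:prod:sp:iii} I would use that the proximal mapping of a separable sum decouples across coordinates: $\Px{\bg}\bx=\argmin_{\by\in\bX}\sum_{i\in I}\big(g_i(y_i)+\tfrac12\norm{y_i-x_i}^2\big)$, whose $i$-th block is exactly $\Px{g_i}x_i$, so $\Px{\bg}=\bigtimes_{i\in I}\Px{g_i}$.

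All six arguments are routine and there is no genuine obstacle; the only point deserving a line of care is \ref{prop:prod:sp:0}, where properness of each $g_i$ is precisely what rules out an $(+\infty)+(-\infty)$ ambiguity in the sum defining $\bg$ and thereby makes the ``coordinatewise'' inheritance of convexity and lower semicontinuity (and, downstream, the decoupling in \ref{prop:prod:sp:i}, \ref{prop:prod:sp:ii}, \ref{prop:prod:sp:iii}) legitimate.
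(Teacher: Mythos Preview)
Your proposal is correct and matches the paper's approach: the paper treats this as a \emph{Fact} and simply cites the corresponding results in \cite{BC} (Propositions~13.30, 16.9, 24.11, and 26.4) for items \ref{prop:prod:sp:i}--\ref{prop:prod:sp:iv}, while you spell out the same elementary separable-structure arguments explicitly. There is no substantive difference.
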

\begin{proof}
\ref{prop:prod:sp:0}: 
Clear.
\ref{prop:prod:sp:i}:
This is \cite[Proposition~13.30]{BC}.
\ref{prop:prod:sp:ii}:
This is \cite[Proposition~16.9]{BC}.
\ref{prop:prod:sp:ii.5}:
This is \cite[Proposition~26.4(ii)]{BC}.
\ref{prop:prod:sp:iii}:
This is \cite[Proposition~24.11]{BC}.
\ref{prop:prod:sp:iv}:
This is \cite[Proposition~26.4(i)]{BC}.
\end{proof}

Next we define the set of least squares solutions
of $({D}_i)_{i\in I}$
\begin{equation}
\Lss=\argmin\
\sum_{i\in I}\dist{{D}_i}^2
.
\end{equation}

Finally, throughout the remainder of this section, we assume that
\begin{empheq}[box=\mybluebox]{equation}
\label{e:assump:bZnotfady}
\text{$\bzero\in \bDelta^\perp+\dom \bg^*$ and $\bZ\neq \varnothing $.}
\end{empheq}

\begin{remark}
	In  many applications,
	the individual  functions $g_i$ have  minimizers.
	In such cases, $(\forall i\in I)$ $0\in \dom 
  \partial g_i^*
  \subseteq \dom g_i^*$, and therefore
	$\bzero\in \dom \bg^*\subseteq \bDelta^\perp+\dom \bg^* $. 	
\end{remark}

\begin{proposition}
\label{prop:prod:sp}
The following hold:
\begin{enumerate}
\item
\label{prop:prod:sp:v}
 $\bv=\Pj{\overline{\bDelta-\dom \bg}}(\bzero)
 =\Pj{\overline{\bDelta-{\bD} }}(\bzero)
 \in \bDelta^\perp$.
 \item
\label{prop:prod:sp:v.0}
$\fix \Pj{\bDelta}\Pj{{\bD}}
=\bDelta\cap (\bv+{\bD})\neq\varnothing$.
 \item
\label{prop:prod:sp:v.5}
$(\forall y\in \fix \Pj{\bDelta}\Pj{{\bD}})$
$\bv=\by-\Pj{{\bD}}(\by)$.
 \item
 \label{prop:prod:sp:vi}
 $\bZ=\menge{\bx\in \bDelta}{\bDelta^\perp\cap\partial \bg (\bx-\bv)\neq\varnothing}
 =\bj\big(\zer \sum_{i\in I}\partial g_i(\cdot-v_i)\big )$.
  \item
 \label{prop:prod:sp:vi.0}
 $ \zer \Big(\sum_{i\in I}\partial g_i(\cdot-v_i)\Big)\neq \varnothing$.
  \item
 \label{prop:prod:sp:vi.5}
 $\Lss
=\fix \Big(\tfrac{1}{m}\sum_{i\in I}\Pj{{D}_i}\Big)
=\bigcap_{i\in I}(v_i+{D}_i).$
 \item
 \label{prop:prod:sp:vii}
 $\be(\bZ)=\zer \big(\sum_{i\in I}\partial g_i(\cdot-v_i)\big )
 \subseteq \cap_{i\in I} (\dom \partial g_i(\cdot-v_i))\subseteq \cap_{i\in I} 
 (v_i+{D}_i)=\Lss$.
\end{enumerate}
\end{proposition}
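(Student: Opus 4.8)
The plan is to read the entire proposition as the specialization of Sections~\ref{sec:aux}--\ref{sec:dyn} to the pair $(U,g)=(\bDelta,\bg)$ in the finite-dimensional space $\bX$. All standing hypotheses of the paper hold for this pair: \eqref{e:CQ} and \eqref{e:Znonempty} are exactly the two conditions assumed in \eqref{e:assump:bZnotfady}, while \eqref{e:PZweakly} and \eqref{e:newassump:v} hold automatically in finite dimensions (for \eqref{e:newassump:v} via \cref{p:vUperp!}\ref{p:vUperp!:ii}). For \ref{prop:prod:sp:v}, I would quote \cref{p:vUperp!}\ref{p:vUperp!:ii} with $(U,g)$ replaced by $(\bDelta,\bg)$, obtaining $\bv=\Pj{\overline{\bDelta-\dom\bg}}(\bzero)\in\bDelta^\perp$, and then rewrite the displacement set using $\dom\bg=\bigtimes_{i\in I}\dom g_i$, hence $\overline{\dom\bg}=\bigtimes_{i\in I}\overline{\dom g_i}=\bD$ (closure commutes with finite products) and $\overline{\bDelta-\dom\bg}=\overline{\bDelta-\overline{\dom\bg}}=\overline{\bDelta-\bD}$.

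The one genuinely new ingredient, and the step I expect to be the main obstacle, is \ref{prop:prod:sp:v.0}. I would first prove the auxiliary fact (essentially known for alternating projections; see \cite{BDM:ORL16}) that for any closed subspace $U$ and nonempty closed convex $C$, with $w:=\Pj{\overline{U-C}}(0)$ (which lies in $U^\perp$ by \cref{fact:BB94:Cor4.6}), one has $\fix(\Pj{U}\Pj{C})=U\cap(w+C)$. For the forward inclusion, let $x=\Pj{U}\Pj{C}x$ and $c:=\Pj{C}x$; then $c-x=\Pj{U^\perp}c\in U^\perp$, and expanding $\|u-c'\|^2$ around $x-c$ for $u\in U$, $c'\in C$ and using $\scal{u-x}{x-c}=0$ together with $\scal{c-c'}{x-c}\ge 0$ (the variational inequality for $\Pj{C}x=c$) yields $\|x-c\|\le\|u-c'\|$; since $x-c\in U-C\subseteq\overline{U-C}$, uniqueness of the nearest point of $\bzero$ in $\overline{U-C}$ forces $x-c=w$, so $x-w=c\in C$. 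For the reverse inclusion, the variational inequality for $w$ gives $\scal{u-c'}{w}\ge\|w\|^2$ for all $u\in U$, $c'\in C$, hence (using $w\in U^\perp$) $\scal{c'}{w}\le-\|w\|^2$; combining with $\scal{x}{w}=0$ shows $\scal{c'-(x-w)}{w}\le 0$ for all $c'\in C$, i.e.\ $\Pj{C}x=x-w$, so $\Pj{U}\Pj{C}x=\Pj{U}(x-w)=x$. Specializing $(U,C)=(\bDelta,\bD)$ and invoking \ref{prop:prod:sp:v} gives $\fix\Pj{\bDelta}\Pj{\bD}=\bDelta\cap(\bv+\bD)$; this is nonempty because, by \cref{0601b} applied to $(\bDelta,\bg)$, $\varnothing\neq\bZ\subseteq\bDelta\cap(\bv+\dom\bg)\subseteq\bDelta\cap(\bv+\bD)$. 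Item \ref{prop:prod:sp:v.5} is then read off from the forward inclusion above: $\Pj{\bD}\by=\by-\bv$ for every $\by\in\fix\Pj{\bDelta}\Pj{\bD}$.

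The remaining items are transcriptions of earlier results through the product-space formulas. For \ref{prop:prod:sp:vi}, the first equality is \cref{e:Zvchar} applied to $(\bDelta,\bg)$; writing $\bx=\bj(x)\in\bDelta$ we have $\bx-\bv=(x-v_i)_{i\in I}$, $\partial\bg(\bx-\bv)=\bigtimes_{i\in I}\partial g_i(x-v_i)$ (by \cref{prop:prod:sp:ii}) and $\bDelta^\perp=\{\bu\in\bX\mid\sum_{i\in I}u_i=0\}$ (by \cref{prop:prod:sp:iv}), so $\bDelta^\perp\cap\partial\bg(\bx-\bv)\neq\varnothing$ iff $0\in\sum_{i\in I}\partial g_i(x-v_i)$, which is the second equality; \ref{prop:prod:sp:vi.0} is then immediate from $\bZ\neq\varnothing$ and injectivity of $\bj$. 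For \ref{prop:prod:sp:vi.5}, Fermat's rule applied to the convex differentiable function $\tfrac12\sum_{i\in I}\dist{D_i}^2$, whose gradient is $m\,\Id-\sum_{i\in I}\Pj{D_i}$, gives $\Lss=\fix(\tfrac1m\sum_{i\in I}\Pj{D_i})$; and since $\Pj{\bDelta}\Pj{\bD}\bj(x)=\bj(\tfrac1m\sum_{i\in I}\Pj{D_i}x)$ by \cref{prop:prod:sp:ii.5}, we get $x\in\fix(\tfrac1m\sum_{i\in I}\Pj{D_i})\Leftrightarrow\bj(x)\in\fix\Pj{\bDelta}\Pj{\bD}=\bDelta\cap(\bv+\bD)\Leftrightarrow x\in\bigcap_{i\in I}(v_i+D_i)$, the middle equality being \ref{prop:prod:sp:v.0}.

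Finally, \ref{prop:prod:sp:vii} follows by applying $\be$ to the description of $\bZ$ in \ref{prop:prod:sp:vi} (note $\be\circ\bj=\Id$), giving $\be(\bZ)=\zer(\sum_{i\in I}\partial g_i(\cdot-v_i))$; if $0\in\sum_{i\in I}\partial g_i(x-v_i)$ then each $\partial g_i(x-v_i)\neq\varnothing$, i.e.\ $x\in v_i+\dom\partial g_i=\dom\partial(g_i(\cdot-v_i))$, which gives the first inclusion, and $v_i+\dom\partial g_i\subseteq v_i+\overline{\dom g_i}=v_i+D_i$ (using $\overline{\dom\partial g_i}=\overline{\dom g_i}=D_i$, the Remark preceding, i.e.\ \cite{BMWnear}) together with \ref{prop:prod:sp:vi.5} gives the remaining inclusion and the final equality. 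Thus only \ref{prop:prod:sp:v.0} requires real work; every other item is a routine application of \cref{p:vUperp!}, \cref{0601b}/\cref{e:Zvchar}, Fermat's rule, and the product-space identities.
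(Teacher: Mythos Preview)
Your proposal is correct and follows essentially the same route as the paper's proof: every item is obtained by specializing the earlier results (\cref{p:vUperp!}, \cref{e:Zvchar}, \cref{0601b}) to $(U,g)=(\bDelta,\bg)$ and unpacking via the product-space formulas. The only substantive difference is that for \ref{prop:prod:sp:v.0}, \ref{prop:prod:sp:v.5}, and \ref{prop:prod:sp:vi.5} the paper simply cites \cite[Lemma~2.2(i)\&(iv)]{BB94} and \cite[Corollary~3.1]{BDM:ORL16}, whereas you supply direct self-contained arguments for the identities $\fix(\Pj{U}\Pj{C})=U\cap(w+C)$ and $\Lss=\fix(\tfrac1m\sum_{i}\Pj{D_i})$; your arguments are valid (in particular your ``$\|x-c\|\le\|u-c'\|$'' step follows from $\|u-c'\|^2-\|x-c\|^2=\|(u-x)-(c'-c)\|^2+2\scal{c-c'}{x-c}\ge0$), so this is a cosmetic rather than structural departure.
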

\begin{proof}
\ref{prop:prod:sp:v}:
Observe that
that $\overline{\bDelta-\dom \bg}
=\overline{\bDelta-\overline{\dom} \bg}
=\overline{\bDelta-\bD}$.
Now combine this with \cref{e:assump:bZnotfady}
and 
\cref{p:vUperp!}\ref{p:vUperp!:ii} applied with $(X,U,g)$
replaced by $(\bX,\bDelta,\bg)$.
\ref{prop:prod:sp:v.0}\&\ref{prop:prod:sp:v.5}:
Combine \cite[Lemma~2.2(i)\&(iv)]{BB94}
and \cref{e:0601c}
applied with $(X,U,g)$
replaced by $(\bX,\bDelta,\bg)$.
\ref{prop:prod:sp:vi}:
The first identity follows from 
applying \cref{e:Zvchar} with $(X,U,g)$
replaced by $(\bX,\bDelta,\bg)$.
The second identity follows from \cite[Proposition~26.4(vii)\&(viii)]{BC}.
\ref{prop:prod:sp:vi.0}:
This is a direct consequence of item \ref{prop:prod:sp:vi}.
\ref{prop:prod:sp:vi.5}:
Combine item \ref{prop:prod:sp:v},
\cite[Lemma~2.2(i)]{BB94}
and \cite[Corollary~3.1]{BDM:ORL16}.
\ref{prop:prod:sp:vii}:
This is a direct consequence of 
\ref{prop:prod:sp:vi} and \ref{prop:prod:sp:vi.5}.
\end{proof}

\begin{proposition}
	\label{prop:vi:zero}
	Suppose that $j\in I$ satisfies that $\dom g_j=X$.
	Then $v_j=0$.	
\end{proposition}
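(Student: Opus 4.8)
The plan is to reduce the statement to \cref{fact:BB94:Cor4.6} by absorbing the $j$-th copy of $X$ into the subspace against which we project. Concretely, I would introduce the linear subspace
\[
Y_j := \menge{\bu=(u_i)_{i\in I}\in\bX}{u_i=\bzero \text{ for all } i\in I\smallsetminus\{j\}}
\]
together with the set $E$ consisting of all $(x_i)_{i\in I}\in\bX$ with $x_j=\bzero$ and $x_i\in D_i$ for every $i\in I\smallsetminus\{j\}$. Since each $D_i=\overline{\dom} g_i$ is nonempty, closed, and convex (because $g_i$ is proper, lower semicontinuous, and convex), so is $E$; and because $X$ is finite-dimensional, $\bDelta+Y_j$ is a closed linear subspace of $\bX$.

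The key observation is that the hypothesis $\dom g_j=X$ forces $D_j=\overline{\dom} g_j=X$, hence $\bD=E+Y_j$; using that $Y_j$ is a subspace (so that $-Y_j=Y_j$), this yields the set identity
\[
\bDelta-\bD=(\bDelta+Y_j)-E,
\]
and therefore also $\overline{\bDelta-\bD}=\overline{(\bDelta+Y_j)-E}$. Recalling from \cref{prop:prod:sp:v} that $\bv=\Pj{\overline{\bDelta-\bD}}(\bzero)$, we obtain $\bv=\Pj{\overline{(\bDelta+Y_j)-E}}(\bzero)$. Now \cref{fact:BB94:Cor4.6}, applied in the Hilbert space $\bX$ with $U$ replaced by $\bDelta+Y_j$ and $C$ replaced by $E$, yields $\bv\in(\bDelta+Y_j)^\perp\subseteq Y_j^\perp$. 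Since $Y_j^\perp=\menge{\bu\in\bX}{u_j=\bzero}$, this means exactly that $v_j=0$.

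I do not anticipate a genuine obstacle here; the argument is essentially bookkeeping in the product space. The two points that need care are: first, that the closures really match --- they do, because $\bDelta-\bD$ and $(\bDelta+Y_j)-E$ are literally the same subset of $\bX$ before any closure is taken, so no closure-of-a-sum subtlety arises; and second, that \cref{fact:BB94:Cor4.6} is applicable, which only requires $\bDelta+Y_j$ to be a closed linear subspace (immediate in finite dimensions) and $E$ to be nonempty, closed, and convex. A more self-contained variant would instead note directly that $\overline{\bDelta-\bD}$ is invariant under translation by $Y_j$ and invoke the elementary fact that the nearest point of $\bzero$ in a closed convex set invariant under a linear subspace is orthogonal to that subspace; routing through \cref{fact:BB94:Cor4.6} simply avoids having to isolate this auxiliary statement.
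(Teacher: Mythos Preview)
Your proof is correct, but it proceeds along a genuinely different line from the paper's.

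The paper exploits the fixed-point description of $\bv$: it uses \cref{prop:prod:sp}\ref{prop:prod:sp:v.0} to pick some $\by\in\fix\Pj{\bDelta}\Pj{\bD}$ (nonemptiness being ensured via the standing assumption $\bZ\neq\varnothing$ and \cref{0601c}), and then invokes \cref{prop:prod:sp}\ref{prop:prod:sp:v.5} to write $\bv=\by-\Pj{\bD}\by$ componentwise; since $D_j=X$ forces $\Pj{D_j}=\Id$, the $j$-th component vanishes. Your argument instead stays at the level of the gap formula $\bv=\Pj{\overline{\bDelta-\bD}}(\bzero)$ from \cref{prop:prod:sp}\ref{prop:prod:sp:v}: you absorb the free $j$-th slot into the subspace, rewrite $\bDelta-\bD=(\bDelta+Y_j)-E$, and then read off $\bv\in(\bDelta+Y_j)^\perp\subseteq Y_j^\perp$ directly from \cref{fact:BB94:Cor4.6}. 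This is slightly more self-contained in that it bypasses the fixed-point machinery (and in particular does not need $\fix\Pj{\bDelta}\Pj{\bD}\neq\varnothing$), while the paper's route has the advantage of exhibiting each $v_i$ concretely as the defect $y_i-\Pj{D_i}y_i$, which is informative beyond the single coordinate $j$. One cosmetic point: in your definition of $Y_j$ the condition should read $u_i=0$ (the zero of $X$) rather than $u_i=\bzero$.
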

\begin{proof}
Set ${\bf A}=\argmin(\iota_{\bDelta}+\bg(\cdot-\bv))$
and observe that 
\cref{prop:prod:sp}\ref{prop:prod:sp:v}\&\ref{prop:prod:sp:v.0}
imply that
${\bf A}\subseteq \bDelta\cap (\bv+\dom \bg)
\subseteq \bDelta\cap (\bv+{\bD})
=\fix \Pj{\bDelta}\Pj{{\bD}}$.
Note that 
	 \cref{e:assump:bZnotfady} and \cref{0601c} 
   (applied with $(U,g)$ 
	 replaced by $(\bDelta, \bg)$)
	 imply that 
	${\bf A}=\bZ$.
  Hence, $\be({\bf A})=\be(\bZ)\subseteq \Lss$, 
  by \cref{prop:prod:sp}\ref{prop:prod:sp:vii}.
  Now, let $\by \in \fix \Pj{\bDelta}\Pj{{\bD}} $.
  Then  \cref{prop:prod:sp}\ref{prop:prod:sp:v.5} implies that 
  $\bv=\by-\Pj{{\bD}}(\by)
  =(y_1,\ldots,y_m)
  -(\Pj{{D}_1}y_1,\ldots,\Pj{{D}_m}y_m)$.     
Consequently, if 
$D_j=X$ then $v_j=y_j-\Pj{{D}_j}y_j=0$.
\end{proof}

\begin{theorem}
\label{thm:prod:sp}
Let $\bx=(x_i)_{i\in I}\in \bX$
and set $\by=\lim_{n\to\infty}\Pj{\fix \bT}(n\bv+\bT^n \bx)$.
Then 
\begin{equation}
\label{e:pd:main:i}
\Pj{\bDelta}\bT^n\bx \to \Pj{\bDelta}\by \in \argmin(\iota_\bDelta+\bg(\cdot-\bv)), 
\end{equation}
\begin{equation}
\label{e:pd:main:ii}
\text{$\bT^{n+1}\bx-\bT^n\bx+\Pj{\bDelta}\bT^n\bx=\Px{\bg}(\R{\bDelta}\bT^n\bx) 
\to -\bv+\Pj{\bDelta}\by$, and 
$\bg(\Px{\bg}\R{\bDelta}\bT^n\bx)\to\min(\iota_\bDelta+\bg(\cdot-\bv))$}.
\end{equation}
Furthermore,
\begin{equation}
\label{e:pd:main:iii}
\be(\Pj{\bDelta}\by)\in \argmin\Big(\sum_{i\in I} g_i(\cdot-v_i)\Big) .
\end{equation}
\end{theorem}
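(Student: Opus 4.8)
The plan is to read off \eqref{e:pd:main:i} and \eqref{e:pd:main:ii} as the verbatim instance of \cref{main} in the product space, and then to push the limit $\Pj{\bDelta}\by$ forward through the averaging map $\be$ to obtain \eqref{e:pd:main:iii}. First I would check that the triple $(\bX,\bDelta,\bg)$ satisfies every standing hypothesis of the paper, so that \cref{main} and \cref{0601c} apply with $(X,U,g,x)$ replaced by $(\bX,\bDelta,\bg,\bx)$: $\bg$ is convex, lower semicontinuous, and proper (a direct consequence of the corresponding properties of the $g_i$); $\bDelta$ is a closed linear subspace of the finite-dimensional Hilbert space $\bX$; $\bZ\neq\varnothing$ and the constraint qualification $\bzero\in\bDelta^\perp+\dom\bg^*$ are precisely \cref{e:assump:bZnotfady}; $\Pj{\bZ}$ is weak-to-weak continuous because $\bX$ is finite-dimensional; and the identity $\bv=\Pj{\overline{\bDelta-\dom\bg}}(\bzero)$ required in \cref{e:newassump:v} is \cref{prop:prod:sp}\ref{prop:prod:sp:v}. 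Since $\bZ\neq\varnothing$, the generalized fixed point set $\fix\bT(\cdot+\bv)$ is nonempty (as in \eqref{e:defF}), so $\by$ is well defined by \cref{Froof} applied to $\bT$.

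Granting this, \cref{main} in $\bX$ gives at once that $\Pj{\bDelta}\bT^n\bx\weakly\Pj{\bDelta}\by\in\argmin(\iota_\bDelta+\bg(\cdot-\bv))$, that $\bT^{n+1}\bx-\bT^n\bx+\Pj{\bDelta}\bT^n\bx=\Px{\bg}(\R{\bDelta}\bT^n\bx)\weakly-\bv+\Pj{\bDelta}\by$, and that $\bg(\Px{\bg}\R{\bDelta}\bT^n\bx)\to\min(\iota_\bDelta+\bg(\cdot-\bv))$; because $\bX$ is finite-dimensional the weak limits are norm limits, which is exactly \eqref{e:pd:main:i} and \eqref{e:pd:main:ii}.

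For \eqref{e:pd:main:iii} I would use the dictionary of \cref{prop:prod:sp}. Since $\bZ\neq\varnothing$, \cref{0601c} in $\bX$ gives $\argmin(\iota_\bDelta+\bg(\cdot-\bv))=\bZ$, so the previous step yields $\Pj{\bDelta}\by\in\bZ$. Then \cref{prop:prod:sp}\ref{prop:prod:sp:vii} shows $\be(\Pj{\bDelta}\by)\in\be(\bZ)=\zer\big(\sum_{i\in I}\partial g_i(\cdot-v_i)\big)$. Finally, the always-valid inclusion $\sum_{i\in I}\partial g_i(\cdot-v_i)\subseteq\partial\big(\sum_{i\in I}g_i(\cdot-v_i)\big)$ together with Fermat's rule shows that every zero of $\sum_{i\in I}\partial g_i(\cdot-v_i)$ minimizes $\sum_{i\in I}g_i(\cdot-v_i)$, and \eqref{e:pd:main:iii} follows.

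I do not anticipate a real obstacle: the only point needing care is the legitimacy of transferring the standing assumptions to the product space --- in particular \cref{e:newassump:v} for $(\bX,\bDelta,\bg)$, which is where finite-dimensionality is genuinely used (via \cref{p:vUperp!}\ref{p:vUperp!:ii}, already recorded in \cref{prop:prod:sp}\ref{prop:prod:sp:v}) --- and the consistent use of the embedding/averaging identities $\be\circ\bj=\Id$ and $\Pj{\bDelta}=\bj\circ\be$ that underlie \cref{prop:prod:sp}\ref{prop:prod:sp:vii}. All of the analytic content lives in \cref{main}; the present theorem is essentially its repackaging for the parallel splitting setting.
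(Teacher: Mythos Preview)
Your proposal is correct and follows essentially the same route as the paper: apply \cref{main} with $(X,U,g)$ replaced by $(\bX,\bDelta,\bg)$ to obtain \eqref{e:pd:main:i} and \eqref{e:pd:main:ii}, invoke \cref{0601c} to identify $\argmin(\iota_\bDelta+\bg(\cdot-\bv))$ with $\bZ$, and then push through $\be$ via \cref{prop:prod:sp}\ref{prop:prod:sp:vii}. Your explicit verification of the standing hypotheses and the final passage from $\zer\big(\sum_{i\in I}\partial g_i(\cdot-v_i)\big)$ to $\argmin\big(\sum_{i\in I}g_i(\cdot-v_i)\big)$ via the subdifferential sum inclusion and Fermat's rule are details the paper leaves implicit, but they are correct and add nothing new methodologically.
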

\begin{proof}
\cref{e:pd:main:i} and \cref{e:pd:main:ii} follow 
from applying \cref{main} with $(X,U,g)$
replaced by $(\bX,\bDelta,\bg)$. 
It follows from combining \cref{e:pd:main:i} and 
\cref{0601c} (applied with $(U,g)$ replaced by $(\bDelta,\bg)$)
that 
$\Pj{\bDelta}\by \in \argmin(\iota_\bDelta+\bg(\cdot-\bv))= \bZ$.
Now combine with 
\cref{prop:prod:sp}\ref{prop:prod:sp:vii}.
\end{proof}

\begin{corollary}
\label{cor:1}
Let 
$x_0\in X$,  and set 
$\overline{x}_0=x_{0,1}=\cdots=x_{0,m}=x_0$.
Update via $(\forall \nnn)$
\begin{subequations}
\label{e:update:ind}
\begin{align}
&(\forall i\in I)\quad x_{n+1,i}
=x_{n,i}-\overline{x}_n+\Px{g_i}(2\overline{x}_n-x_{n,i}),
\\
&\overline{x}_{n+1}=\tfrac{1}{m}\sum_{i\in I}x_{n+1,i}.
\end{align}
\end{subequations} 
Then $\overline{x}_n\to \overline{x}
\in \argmin\big(\sum_{i\in I} g_i(\cdot-v_i)\big)$. 
\end{corollary}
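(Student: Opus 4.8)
The plan is to recognise the recursion \eqref{e:update:ind} as the Douglas--Rachford iteration governed by $\bT=\Id-\Pj{\bDelta}+\Px{\bg}\R{\bDelta}$ in the product space $\bX$, started at the diagonal point $\bj(x_0)$, and then to read off the conclusion from \cref{thm:prod:sp}. Writing $\bx_n=(x_{n,i})_{i\in I}$, I would first show by induction on $n$ that $\bx_n=\bT^n\bj(x_0)$ \emph{and} that the scalar iterate $\overline{x}_n$ produced by the averaging step equals $\be(\bx_n)=\tfrac{1}{m}\sum_{i\in I}x_{n,i}$. The base case is immediate since $\bx_0=\bj(x_0)$ and $\be(\bj(x_0))=x_0=\overline{x}_0$. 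For the inductive step, \cref{prop:prod:sp}\ref{prop:prod:sp:ii.5} gives $\Pj{\bDelta}\bx_n=\bj(\be(\bx_n))=\bj(\overline{x}_n)$, so $\R{\bDelta}\bx_n=2\Pj{\bDelta}\bx_n-\bx_n=(2\overline{x}_n-x_{n,i})_{i\in I}$; since $\Px{\bg}$ acts coordinatewise through the $\Px{g_i}$ by \cref{prop:prod:sp}\ref{prop:prod:sp:iii}, the $i$-th coordinate of $\bT\bx_n$ is $x_{n,i}-\overline{x}_n+\Px{g_i}(2\overline{x}_n-x_{n,i})$, which is precisely $x_{n+1,i}$; hence $\bx_{n+1}=\bT\bx_n$. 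Finally $\be(\bx_{n+1})=\tfrac{1}{m}\sum_{i\in I}x_{n+1,i}=\overline{x}_{n+1}$ by the second line of \eqref{e:update:ind}, which propagates the invariant. Consequently $\bj(\overline{x}_n)=\Pj{\bDelta}\bx_n=\Pj{\bDelta}\bT^n\bj(x_0)$ for every $\nnn$.

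With this in hand, I would apply \cref{thm:prod:sp} to the point $\bx=\bj(x_0)$; the blanket hypothesis \eqref{e:assump:bZnotfady} of this section is exactly what makes that theorem applicable. Putting $\by=\lim_{n\to\infty}\Pj{\fix\bT}(n\bv+\bT^n\bj(x_0))$, the theorem gives $\Pj{\bDelta}\bT^n\bj(x_0)\to\Pj{\bDelta}\by$ together with $\be(\Pj{\bDelta}\by)\in\argmin\big(\sum_{i\in I}g_i(\cdot-v_i)\big)$, the latter being \eqref{e:pd:main:iii}. Combining with $\bj(\overline{x}_n)=\Pj{\bDelta}\bT^n\bj(x_0)$ and applying the continuous linear map $\be$ (note $\be\circ\bj=\Id$ and $\bX$ is finite-dimensional, so the convergence is in norm), I conclude $\overline{x}_n=\be(\bj(\overline{x}_n))\to\be(\Pj{\bDelta}\by)=:\overline{x}\in\argmin\big(\sum_{i\in I}g_i(\cdot-v_i)\big)$, which is the assertion.

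I do not anticipate any genuine analytic difficulty here: the entire substance is carried by \cref{thm:prod:sp} (and ultimately by \cref{main}), so the corollary amounts to a dictionary translation of that theorem into the ``one diagonal update per block'' scheme \eqref{e:update:ind}. The single point requiring care is the bookkeeping in the induction, specifically keeping track of the fact that the running averages $\overline{x}_n$ coincide with $\be$ applied to the product iterates --- it is only via this invariant that the argument $2\overline{x}_n-x_{n,i}$ appearing in \eqref{e:update:ind} can be identified with the $i$-th coordinate of the reflection $\R{\bDelta}\bx_n$. Once that is pinned down, the remaining steps are direct invocations of \cref{thm:prod:sp} and \cref{prop:prod:sp}.
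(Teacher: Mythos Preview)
Your proposal is correct and follows essentially the same route as the paper's own proof, which consists of the single sentence ``Combine \cref{thm:prod:sp} and \cref{prop:prod:sp}\ref{prop:prod:sp:vi.0}\&\ref{prop:prod:sp:ii.5}\&\ref{prop:prod:sp:iii} in view of \cref{e:assump:bZnotfady}.'' You have simply unpacked what the paper leaves implicit: the inductive identification of the recursion \eqref{e:update:ind} with the product-space Douglas--Rachford iteration $\bx_{n+1}=\bT\bx_n$ via \ref{prop:prod:sp:ii.5} and \ref{prop:prod:sp:iii}, and the passage from \eqref{e:pd:main:iii} back to $X$ through $\be$.
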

\begin{proof}
Combine \cref{thm:prod:sp} and 
\cref{prop:prod:sp}\ref{prop:prod:sp:vi.0}\&%
\ref{prop:prod:sp:ii.5}\&%
\ref{prop:prod:sp:iii}
in view of \cref{e:assump:bZnotfady}.
\end{proof}

\begin{corollary}
Suppose that $J\subseteq I$,
that
for every  $i\in \Iobj$, $f_i\colon X\to \RR$
is convex and satisfies $\dom f_i=X$ and $\argmin f_i\neq \fady$,
and that 
for every  $i\in \Icons$, $C_i\neq X$ is convex, closed, and nonempty.
Set $\Lss_C=\argmin\sum_{i\in \Icons}\dist{C_i}^2$.
Consider the problem
\begin{equation}
\label{e:cor:app}
\text{minimize $\sum_{i\in \Iobj} f_i(x)$ subject to
$x\in \bigcap _{i\in \Icons}C_i$.}
\end{equation}
Suppose  that $\zer\big(\sum_{i\in \Iobj}\partial f_i
+\sum_{i\in \Icons}\Nc{C_i}(\cdot-v_i)\big)\neq \varnothing$.
Let 
$x_0\in X$, and set 
$\overline{x}_0=x_{0,1}=\cdots=x_{0,m}=x_0$.
Update via $(\forall \nnn)$
\begin{subequations}
\label{e:update:split}
\begin{align}
&(\forall i\in \Iobj)\quad x_{n+1,i}=x_{n,i}-\overline{x}_n+\Px{g_i}(2\overline{x}_n-x_{n,i}),
\\
&(\forall i\in \Icons)\quad x_{n+1,i}=x_{n,i}-\overline{x}_n+\Pj{C _i}
(2\overline{x}_n-x_{n,i}),
\\
&\overline{x}_{n+1}=\tfrac{1}{m}\sum_{i\in I}x_{n+1,i}.
\end{align}
\end{subequations} 
Then $\overline{x}_n\to \overline{x}\in X$,
 and $\overline{x}$ is a solution of 
 \begin{equation}
 \label{e:p:lss:cons}
\text{minimize $\sum_{i\in \Iobj} f_i(x)$ subject to
$x\in \Lss_C$.}
\end{equation}
In particular, if $\cap _{i\in \Icons}C_i \neq \varnothing$,
then $\Lss_C=\cap _{i\in \Icons}C_i \neq \varnothing$
 and $\overline{x}$ is a solution of \cref{e:cor:app}.
\end{corollary}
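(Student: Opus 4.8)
The plan is to reduce this statement to \cref{cor:1} by choosing $g_i:=f_i$ for $i\in\Iobj$ and $g_i:=\iota_{C_i}$ for $i\in\Icons$. Each $g_i$ is then convex, lower semicontinuous, and proper, and since $\Px{\iota_{C_i}}=\Pj{C_i}$ the update \eqref{e:update:split} is exactly the update \eqref{e:update:ind} for this choice of $(g_i)_{i\in I}$. The one substantive task is therefore to verify that the standing assumption \eqref{e:assump:bZnotfady} is satisfied here; once it is, \cref{cor:1} gives $\overline{x}_n\to\overline{x}\in\argmin\big(\sum_{i\in I}g_i(\cdot-v_i)\big)$, and it then remains to recognize this $\argmin$-set as the solution set of \eqref{e:p:lss:cons}.

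To get $\bzero\in\bDelta^\perp+\dom\bg^*$ I would in fact verify $\bzero\in\dom\bg^*$: for $i\in\Iobj$ the hypothesis $\argmin f_i\neq\varnothing$ yields $0\in\ran\partial f_i\subseteq\dom\partial f_i^*\subseteq\dom f_i^*$, and for $i\in\Icons$ the function $g_i^*$ is the support function of $C_i$, whose value at $0$ equals $0$ because $C_i\neq\varnothing$; since $\dom\bg^*$ is the product of the sets $\dom g_i^*$, this gives $\bzero\in\dom\bg^*$. This already permits applying \cref{p:vUperp!}\ref{p:vUperp!:ii} in the product space to conclude $\bv=\Pj{\overline{\bDelta-\dom\bg}}(\bzero)\in\bDelta^\perp$, and, since $\dom g_i=X$ for every $i\in\Iobj$, the argument of \cref{prop:vi:zero} then gives $v_i=0$ for every $i\in\Iobj$. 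Consequently $\partial g_i(\cdot-v_i)=\partial f_i$ when $i\in\Iobj$ and $\partial g_i(\cdot-v_i)=\Nc{C_i}(\cdot-v_i)$ when $i\in\Icons$, so the hypothesis $\zer\big(\sum_{i\in\Iobj}\partial f_i+\sum_{i\in\Icons}\Nc{C_i}(\cdot-v_i)\big)\neq\varnothing$ is exactly the statement $\zer\big(\sum_{i\in I}\partial g_i(\cdot-v_i)\big)\neq\varnothing$, and hence, by \cref{prop:prod:sp}\ref{prop:prod:sp:vi}, $\bZ=\bj\big(\zer\sum_{i\in I}\partial g_i(\cdot-v_i)\big)\neq\varnothing$. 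This establishes \eqref{e:assump:bZnotfady}.

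With \eqref{e:assump:bZnotfady} in force, \cref{cor:1} applies and delivers $\overline{x}_n\to\overline{x}\in\argmin\big(\sum_{i\in I}g_i(\cdot-v_i)\big)$. Using $v_i=0$ for $i\in\Iobj$ together with $\iota_{C_i}(\cdot-v_i)=\iota_{v_i+C_i}$, one obtains $\sum_{i\in I}g_i(\cdot-v_i)=\sum_{i\in\Iobj}f_i+\iota_{\bigcap_{i\in\Icons}(v_i+C_i)}$. Moreover $D_i=\overline{\dom}\,g_i$ is $X$ for $i\in\Iobj$ and $C_i$ for $i\in\Icons$, so $\dist{D_i}\equiv0$ for $i\in\Iobj$, whence $\Lss=\argmin\sum_{i\in I}\dist{D_i}^2=\argmin\sum_{i\in\Icons}\dist{C_i}^2=\Lss_C$; and \cref{prop:prod:sp}\ref{prop:prod:sp:vi.5} identifies this set with $\bigcap_{i\in I}(v_i+D_i)=\bigcap_{i\in\Icons}(v_i+C_i)$. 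Therefore $\argmin\big(\sum_{i\in I}g_i(\cdot-v_i)\big)=\argmin\big(\sum_{i\in\Iobj}f_i+\iota_{\Lss_C}\big)$, which is precisely the solution set of \eqref{e:p:lss:cons}; hence $\overline{x}$ solves \eqref{e:p:lss:cons}. Finally, if in addition $\bigcap_{i\in\Icons}C_i\neq\varnothing$, then $\sum_{i\in\Icons}\dist{C_i}^2$ attains its minimum value $0$ exactly on $\bigcap_{i\in\Icons}C_i$, so $\Lss_C=\bigcap_{i\in\Icons}C_i\neq\varnothing$ and \eqref{e:p:lss:cons} collapses to \eqref{e:cor:app}, which yields the last assertion.

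The step I expect to be the main obstacle is keeping the verification of $\bZ\neq\varnothing$ free of circular reasoning, since \cref{prop:vi:zero} and \cref{prop:prod:sp} are themselves stated under the standing hypothesis \eqref{e:assump:bZnotfady}. The way out is that the pieces actually invoked --- $\bv=\Pj{\overline{\bDelta-\dom\bg}}(\bzero)\in\bDelta^\perp$, the vanishing $v_i=0$ on $\Iobj$, and the inclusion $\bj\big(\zer\sum_{i\in I}\partial g_i(\cdot-v_i)\big)\subseteq\bZ$ --- each follow from $\bzero\in\bDelta^\perp+\dom\bg^*$ alone, which is verified first; the rest is routine bookkeeping with separable sums and indicator functions.
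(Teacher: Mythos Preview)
Your reduction to \cref{cor:1} via $g_i=f_i$ on $\Iobj$ and $g_i=\iota_{C_i}$ on $\Icons$ is exactly the paper's route, and your verification of the constraint qualification $\bzero\in\dom\bg^*$ matches as well. The one genuine difference lies in the final identification step: the paper pulls from \cref{cor:1} the stronger conclusion $\overline{x}\in\zer\big(\sum_{i\in I}\partial g_i(\cdot-v_i)\big)$ (implicit in the proof of \cref{thm:prod:sp} via $\Pj{\bDelta}\by\in\bZ$), then splits $0=(-u)+u$ with $-u\in\partial\big(\sum_{i\in\Iobj}f_i\big)(\overline{x})$ and $u\in\sum_{i\in\Icons}\Nc{C_i}(\overline{x}-v_i)\subseteq\Nc{\Lss_C}(\overline{x})$ to certify optimality for \eqref{e:p:lss:cons}. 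You instead stay at the level of function values, rewriting $\sum_{i\in I}g_i(\cdot-v_i)=\sum_{i\in\Iobj}f_i+\iota_{\bigcap_{i\in\Icons}(v_i+C_i)}$ and invoking \cref{prop:prod:sp}\ref{prop:prod:sp:vi.5} to recognize the constraint set as $\Lss_C$. Both are correct; yours avoids appealing to a conclusion not explicitly stated in \cref{cor:1}, while the paper's subdifferential split makes the normal-cone inclusion $\sum_{i\in\Icons}\Nc{C_i}(\cdot-v_i)\subseteq\Nc{\Lss_C}$ do the work.

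Your attention to the circularity in invoking \cref{prop:vi:zero} before $\bZ\neq\varnothing$ is established is well placed and goes beyond what the paper makes explicit. One caution: the literal ``argument of \cref{prop:vi:zero}'' as written passes through $\fix\Pj{\bDelta}\Pj{\bD}\neq\varnothing$, whose proof in \cref{prop:prod:sp}\ref{prop:prod:sp:v.0} cites \cref{0601c} and hence the standing hypothesis. The claim $v_j=0$ when $\dom g_j=X$ is nevertheless obtainable directly from $\bv=\Pj{\overline{\bDelta-\bD}}(\bzero)$ alone (replace the $j$-th coordinate of any approximating $\bx^{(n)}-\mathbf{d}^{(n)}$ by $0$ using $D_j=X$ to produce a closer point), which is presumably what you have in mind; it would be worth saying so rather than pointing to \cref{prop:vi:zero} verbatim.
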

\begin{proof}
Suppose that $g_i=f_i$, if $i\in \Iobj$;
and $g_i=\iota_{C_i}$, if $i\in \Icons$,
and observe that \cref{e:cor:app} reduces to
\begin{equation}
\text{minimize $\sum_{i\in I} g_i(x)$}.
\end{equation}
Note that combining 
\cref{e:update:ind} 
 and \cite[Example~23.4]{BC}
 yields \cref{e:update:split}.
It follows from \cref{prop:vi:zero} 
that $(\forall i\in \Iobj)$ $v_i=0$.
Consequently,
$\zer \big(\sum_{i\in I}\partial g_i(\cdot-v_i)\big) 
=\zer \big(\sum_{i\in \Iobj}\partial f_i
+ \sum_{i\in \Icons}\Nc{C_i}(\cdot-v_i)\big)\neq \varnothing$,
 and by \cref{cor:1} we have $\overline{x}_n\to \overline{x}\in X$,
 and $\overline{x}\in \zer \big(\sum_{i\in \Iobj}\partial f_i
+ \sum_{i\in \Icons}\Nc{C_i}(\cdot-v_i)\big)$.
Finally, using \cref{prop:prod:sp}\ref{prop:prod:sp:vi.5},
$(\exists u\in X)$
$-u\in \sum_{i\in \Iobj}\partial f_i(\overline{x})
=\partial (\sum_{i\in \Iobj}f_i)(\overline{x})$
 and $u\in \sum_{i\in \Icons}\Nc{C_i}(\overline{x}-v_i)
 \subseteq  \Nc{\cap_{i\in\Icons}(v_i+C_i)}(\overline{x})
 =\Nc{\Lss_C}(\overline{x})$.
Therefore, $\overline{x}$ solves \cref{e:p:lss:cons}. 
\end{proof}

\small 

\section*{Acknowledgements}
The authors thank the editor and three anonymous referees for 
insightful comments that led to a substantially improved manuscript. 
The research of HHB was partially supported by a Discovery Grant
of the Natural Sciences and Engineering Research Council of
Canada. 
The research of WMM was partially supported by 
the Natural Sciences and Engineering Research Council of
Canada Postdoctoral Fellowship.

\end{document}